\renewcommand\subsection{\@startsection{subsection}{2}%
  \z@{.5\linespacing\@plus.7\linespacing}{-.5em}%
  {\normalfont\scshape}}
\theoremstyle{plain}
\newtheorem{lem}{Lemma}
\newtheorem{lemma}[lem]{Lemma}
\newtheorem{theorem}[lem]{Theorem}
\newtheorem{proposition}[lem]{Proposition}
\newtheorem{corollary}[lem]{Corollary}
\newtheorem{conj}[lem]{Conjecture}
\theoremstyle{definition}
\newtheorem{definition}[lem]{Definition}
\newtheorem{example}[lem]{Example}
\newtheorem{remark}[lem]{Remark}
\newtheorem{notation}[lem]{Notation}
\numberwithin{equation}{section}
\numberwithin{lem}{section}
\newcommand{\mathfont}{\mathbf}
\newcommand{\R}{\mathfont R}
\newcommand{\Z}{\mathfont Z}
\newcommand{\Q}{\mathfont Q}
\newcommand{\N} {\mathfont N}
\newcommand{\F}{\mathfont F}
\newcommand{\cP}{\mathcal{P}}
\DeclareFontFamily{OT1}{rsfs}{}
\DeclareFontShape{OT1}{rsfs}{n}{it}{<-> rsfs10}{}
\DeclareMathAlphabet{\mathscr}{OT1}{rsfs}{n}{it}
\DeclareMathOperator{\Hom}{Hom}
\DeclareMathOperator{\lcm}{lcm}
\newcommand{\Zp}{\mathfont{Z}_p}
\newcommand{\PP}{\mathfont{P}}
\DeclareMathOperator{\Jac}{Jac}
\DeclareMathOperator{\HH}{H}
\newcommand{\Qpositive}{\Q_{>0}}
\newcommand{\floor}[1]{\left\lfloor #1 \right\rfloor}
\newcommand{\ceil}[1]{\left\lceil #1 \right\rceil}
\newcommand{\fracpart}[1]{\left\{ #1 \right\}}
\newcommand{\paren}[1]{\left( #1 \right)}
\newcommand{\brackets}[1]{\left[ #1 \right]}
\newcommand{\getdigit}[2][k]{\brackets{p^{#1}}\paren{#2}}
\newcommand{\tDelta}{\widetilde{\Delta}}
\newcommand{\tdelta}{\widetilde{\delta}}
\newcommand{\avg}[1]{\overline{#1}}
\title{Higher a-numbers in \texorpdfstring{$\Z_p$}{Zp}-towers via Counting Lattice Points}
\author{Jeremy Booher, Jack Hsieh, Rakesh Rivera, Vincent Tran, James Upton, Carol Wu}
\begin{document} 

\begin{abstract}
    Booher, Cais, Kramer-Miller and Upton study a class of $\Z_p$-towers of curves in characteristic $p$ with ramification controlled by an integer $d$.  In the special case that $d$ divides $p-1$, they prove a formula for the higher $a$-numbers of these curves involving the number of lattice points in a complicated region of the plane. Booher and Cais had previously conjectured that for $n$ sufficiently large the higher $a$-numbers of the $n$th curve are given by formulae of the form $\alpha(n) p^{2n} + \beta(n) p^n + \lambda_r(n) n + \nu(n) $, where $\alpha,\beta,\nu,\lambda_r$ are periodic functions of $n$.  This is an example of a new kind of Iwasawa theory.  We establish this conjecture by carefully studying these lattice points. 
\end{abstract}

\maketitle
\tableofcontents

\section{Introduction}
    Let $k$ be a perfect field of characteristic $p$.  Given a smooth projective curve $X$ over $k$, we are interested in studying the $p$-torsion in the Jacobian of $X$ as a generalization of the $p$-torsion of the class group of the function field $k(X)$.  Note that in characteristic $p$ the group scheme $\Jac(X)[p]$ is not reduced, so is not determined by its $k$-points, while the $k$-points of $\Jac(X)[p]$ are isomorphic to the $p$-torsion in the class group of $k(X)$.  It is natural to investigate whether some version of Iwasawa theory holds for $\Jac(X)[p]$ in $\Z_p$-towers of curves, in particular for the additional geometric  structure of $\Jac(X)[p]$ which cannot be seen using the class group.
    
    A concrete way to do so is to study \emph{higher $a$-numbers}. Consider the vector space $\HH^0(X,\Omega^1_X)$ of regular differentials on \(X\), with dimension \(g_X\) equal to the genus of \(X\). We denote by \(V_X\) the \emph{Cartier operator}, which is a semi-linear endomorphism of \(\HH^0(X,\Omega^1_X)\) (see \cite{Cartier} for its definition and basic properties). Following \cite{bckmu}, we define the higher $a$-numbers of $X$ as follows.

    \begin{definition}
        For a positive integer $r$, we define the $r$th higher $a$-number of $X$ to be
        \[
        a^{r}(X) \colonequals \dim_k \ker \left( V_X^r : \HH^0(X,\Omega^1_X) \to \HH^0(X,\Omega^1_X) \right).
        \]
    \end{definition}

    This reflects (part of) the structure of $\Jac(X)[p]$.  The Dieudonn\'{e} module $\mathbf{D}(\Jac(X)[p])$ is naturally identified with the de Rham cohomology of $X$ \cite{oda}.  The submodule  $\HH^0(X,\Omega^1_X) \subset \HH^1_{\textrm{dR}}(X)$ corresponds to the kernel of the Frobenius $F$ on the Dieudonn\'{e} module  and the Cartier operator corresponds to the Verschiebung $V$.  This is most familiar for the first $a$-number of $X$, where
    \[
    a^1(X)=\dim_{\F_p} \Hom_{\overline{k}}(\alpha_p, \Jac(X)[p]) = \dim_k \left(\ker F \cap \ker V \right) = \dim_k \ker (V_X | \HH^0(X,\Omega^1_X))).
    \]
    
    Inspired by Iwasawa theory and guided by computation, Booher and Cais formulated conjectures that the higher $a$-numbers of ``reasonable'' $\Z_p$-towers of curves over $k$ ``behave regularly'' in the tower \cite{boohercais}.  Booher, Cais, Kramer-Miller, and Upton proved a formula for the higher $a$-numbers in a specific class of reasonable $\Z_p$-towers \cite{bckmu}.  The goal of this paper is to connect the two and establish the conjecture for this class of towers.
    
    We first state the conjecture in this special case.  
    Let $\{X_n\}_{n \geq 0}$ be a $\Z_p$-tower of curves over the projective line, i.e. a sequence of smooth projective curves over $k$ with morphisms
    \[
    \ldots \to X_n \to X_{n-1} \to \ldots \to X_1 \to X_0
    \]
    such that $X_n$ is a branched $\Z/p^n\Z$-cover of $X_0 \simeq \PP^1_k$.  We assume that each $X_n$ is totally ramified over $\infty \in \PP^1_k \simeq X_0$ and unramified elsewhere.  Finally, we assume that the tower has minimal break ratios \cite[Definition 3.2]{bckmu}: if $s_n$ are the breaks in the upper-numbering ramification filtration for the tower above infinity then there is an integer $d$ such that $s_n = d p^{n-1}$ for $n \geq 1$.
    The integer $d$ is the \emph{ramification invariant} of the tower.
    
    \begin{conj}[{Booher-Cais~\cite[Conjecture 3.8]{boohercais}}] \label{conj: bc}
        Let $\{X_n\}$ be a $\Z_p$-tower totally ramified over one point of $X_0 \simeq \PP^1$ with minimal break ratios and ramification invariant $d$.  For each $r \geq 1$, there exists an integer $m_r$ and functions $b_r,\nu_r,\lambda_r: \Z/m_r\Z \to \Q$ such that
        \[
        a^r(X_n) = d \cdot \frac{r(p-1)}{2 (p+1)((p-1)r  + (p+1))} \cdot p^{2n} + b_r(n) \cdot p^n  + \lambda_r(n) \cdot n + \nu_r(n) \quad \text{for } n \gg 0.
        \]
    \end{conj}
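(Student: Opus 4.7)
The plan is to take as input the explicit formula of \cite{bckmu} that expresses $a^r(X_n)$ as the number of integer lattice points in a planar region $R_n^{(r)} \subset \R^2$ whose defining inequalities depend on $p$, $r$, $d$, and the tower level $n$. Because the upper ramification breaks grow like $s_n = dp^{n-1}$, the region $R_n^{(r)}$ scales naturally with $p^n$; after rescaling by $p^{-n}$ one expects the regions $D_n^{(r)} := p^{-n} R_n^{(r)}$ to approach a limiting region $D_\infty^{(r)}$ up to controlled boundary strips reflecting $p$-adic digit data of $n$. The target expansion
\[
a^r(X_n) = \alpha \cdot p^{2n} + b_r(n) \cdot p^n + \lambda_r(n) \cdot n + \nu_r(n)
\]
should then emerge as an Ehrhart/Euler--Maclaurin-style decomposition of $|R_n^{(r)} \cap \Z^2|$ into an area term, a boundary term, and lower-order corrections, supplemented by a linear-in-$n$ contribution forced by the layered structure of the lattice region.

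\textbf{Execution.} The first step is the area computation: integrating over $D_\infty^{(r)}$ and verifying
\[
\operatorname{area}(D_\infty^{(r)}) = \frac{d \, r(p-1)}{2(p+1)((p-1)r + (p+1))},
\]
which produces the leading constant in the conjecture and accounts for the $p^{2n}$ term. The second step is the boundary analysis: one decomposes $R_n^{(r)}$ into finitely many subregions bounded by lines of rational slope, computes the lattice point count in each subregion by exact summation of arithmetic progressions along the edges, and observes that the residues modulo $p^n$ of the relevant vertices depend only on $n$ modulo some integer $m_r$. This yields the periodic coefficient $b_r(n)$ of $p^n$, and the same analysis applied to the $O(1)$ error terms at the corners produces the bounded periodic term $\nu_r(n)$.

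\textbf{Main obstacle.} The subtlest task is extracting the $\lambda_r(n) \cdot n$ term, which has no counterpart in classical Ehrhart theory for a fixed rational polytope dilated by $p^n$. The expectation is that $R_n^{(r)}$ decomposes into roughly $n$ layers, one per intermediate level of the $\Z_p$-tower, each of which contributes a uniformly bounded excess beyond what the area and smooth boundary predict; summing these excesses over all $n$ layers produces a term that is genuinely linear in $n$, and showing that the per-layer excess depends only on the layer index modulo $m_r$ yields the periodic coefficient $\lambda_r$. Technically this requires an inductive comparison of $R_n^{(r)}$ with a rescaled copy of $R_{n-1}^{(r)}$, isolating the non-Ehrhart discrepancy of each new layer and controlling it modulo $m_r$ via the $p$-adic combinatorics of the defining inequalities. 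Once this linear-in-$n$ contribution is quantified, the residual $O(1)$ fluctuation is periodic by the same arithmetic-progression analysis as for $b_r(n)$, completing the proof of Conjecture~\ref{conj: bc}.
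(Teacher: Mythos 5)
Your plan has the right overall shape — start from the \cite{bckmu} lattice-point formula, extract area, boundary, linear and periodic contributions — but it passes over the specific mechanisms the paper uses, and two of the omissions are serious.

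First, scope: the lattice-point formula of \cite{bckmu} that you take as input is only available when $d \mid p-1$. The paper therefore proves \Cref{conj: bc} only in that case (\Cref{thm: main theorem}); your proposal presents itself as a proof of the full conjecture, which it cannot be, because the starting point does not exist in general.

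Second, and more substantively, the region $\Delta_n$ is \emph{not} a union of subregions bounded by lines of rational slope, as your boundary analysis assumes. The lower boundary is given by $j \geq \mu(i)$, where $\mu(i) = \lfloor \tau i\rfloor + \delta(i)$ and $\delta(i) \in \{0,1\}$ depends on a base-$p$ digit comparison for $\tau i$ (\Cref{defn: mu}, \Cref{defn: deltas}). So the bottom edge oscillates around the line $y = \tau x$ in a way controlled by $p$-adic digits, not by arithmetic progressions along a rational line. Treating it as a rational polytope and applying Ehrhart/Euler--Maclaurin gives the wrong answer on precisely this edge; isolating and controlling $\delta$ is the entire difficulty (see \Cref{sec: idea of proof} and \Cref{prop: triangle count}). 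Your "layered structure" heuristic for the linear-in-$n$ term is pointing in the right direction, but you have no mechanism for it. The actual mechanism is very concrete: the multiplicative reduction $\delta(p^m i) = \delta(i)$ (\Cref{prop: delta multiplicative reduction}) turns $\sum_{i \le \lfloor x^{-1}p^n\rfloor}\delta(i)$ into a sum $\sum_{e=0}^n \sum_i \delta_0(i)$ of $n+1$ blocks (\Cref{lemma: delta sum by exponent}), each block contributing an eventually constant amount, and that is where the genuinely linear term comes from. Without identifying this multiplicativity your "per-layer excess" has no reason to exist, and the inductive comparison of $R_n$ with a rescaled $R_{n-1}$ you propose is not what happens.

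Finally, a smaller but telling discrepancy: in the $d \mid p-1$ case the $p^n$ coefficient vanishes, i.e.\ $b_r(n) \equiv 0$. This happens because the $p^n$ terms from the floor sums (\Cref{cor: difference of floor sums}) exactly cancel those from the $\delta$ sums (\Cref{cor: difference of delta sums}); additionally, a nontrivial cancellation (\Cref{prop: linear coefficient of tau inverse sum is zero}) kills the linear-in-$n$ contribution from the $\tau^{-1}$ side. Your proposal predicts a nontrivial $b_r(n)$ from a boundary term and does not account for either cancellation. These cancellations are not cosmetic: they are why the final answer has the simple shape it does, and any proof along your lines would have to produce and then explain them.
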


    \begin{remark}
        For any \(d\) prime to \(p\), there are many \(\Z_p\)-towers over \(\PP_k^1\) which are totally ramified at \(\infty\) with ramification invariant \(d\). A simple class of examples is provided by the \emph{basic \(\Z_p\)-towers} of \cite[\S 2.3]{boohercais}. \Cref{conj: bc} was inspired by explicit calculations for these basic towers. In these calculations it appeared that $b_r(n)=0$ (cf. Conjecture 1.2 of \cite{boohercais}).  It was not clear to what extent this generalized to other $\Z_p$-towers; see \Cref{thm: main theorem} below.
    \end{remark}

    In this paper, we focus on the case when $d \mid p-1$.  This restriction on the ramification is natural as it guarantees that the $a$-number and Newton polygon of $X_1$ are determined by $d$ \cite{farnellpries,zhu}.  Furthermore, the main theorem in \cite{bckmu} is most precise when $d \mid p-1$.  In particular, we will be able to prove an exact formula establishing Conjecture~\ref{conj: bc} in this case, as opposed to a weaker asymptotic statement (see \cite[Conjecture 3.4]{boohercais} \cite[Corollary 1.2]{bckmu}).
    
More precisely, when $d \mid p-1$, \cite[Theorem 1.1]{bckmu} gives a formula for $a^r(X_n)$ in terms of the number of lattice points in a complicated region in the plane.  
    To state it, we need to set up a bit of notation.
    It relies on ordering integer sequences  $(a_n)_{n\ge 0}$ and $(b_n)_{n\ge 0}$ using the lexicographic ordering:
    \begin{equation} \label{eq:lexicographic}
    (a_n)_{n\ge 0} > (b_n)_{n\ge 0} \text{ if there exists }  m\geq 0 \textrm{ such that }   a_i = b_i \textrm{ for } 0\leq i < m \text{ and } a_m > b_m.
    \end{equation}

    \begin{definition} \label{defn: mu}
        Fix a prime $p$, and $d \mid p-1$.
            For $i \in \Z_{\ge 0}$, let $\displaystyle \frac{p+1}{d} i = \sum_{n\in \Z} i_n p^{n}$ be the (archimedean) base-$p$ expansion of  $\frac{p+1}{d} i$. Then define
            \begin{equation*}
                    \mu(i) 
                    \colonequals 
                    \begin{cases}
                        \floor{ \frac{p+1}{d}i } & \text{if}\ (i_n)_{n\geq 0} > (i_{-1-n})_{n \geq 0}  \\ \\
                        \ceil{ \frac{p+1}{d}i } & \text{otherwise}
                    \end{cases}
            \end{equation*}

    \end{definition}

    \begin{definition} \label{defn: tn}
        Fix $p$, $r$, and $d \mid p-1$.  Let $\gamma \colonequals  \frac{(p-1) r + (p+1)}{d}$.
        For a positive integer $n$, let $t_n \colonequals \displaystyle\floor{\gamma^{-1} p^n} =   \left \lfloor \frac{d p^n}{(r+1)p - (r-1)} \right \rfloor$ and define
        \[
        \Delta_n \colonequals \left \{ (i,j) \in \Z_{>0}^2 : i > t_n \text{ and } \mu(i) \leq j \leq p^n-1 \right \}.
        \]
    \end{definition}

    \begin{theorem}[Booher, Cais, Kramer--Miller, and Upton]    \label{thm: bckmu}
        Let $\{X_n\}$ be a $\Z_p$-tower totally ramified over one point of $X_0 \simeq \PP^1$ with minimal break ratios and ramification invariant $d$.  When $d \mid p-1$, the $r$th higher a-number of $X_n$ is given by
        \[
        a^r(X_n) =  \frac{r (p-1) t_n (t_n+1)}{2d}  + \# \Delta_n.
        \]
    \end{theorem}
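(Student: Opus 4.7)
The plan is to produce an explicit basis of $\HH^0(X_n, \Omega^1_{X_n})$ adapted to the $\Z/p^n\Z$-cover structure, compute the Cartier operator on this basis, and reduce $\dim_k \ker V^r$ to a counting problem over a 2D lattice.

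First, using the Artin-Schreier-Witt description of the tower, I would decompose $\HH^0(X_n, \Omega^1_{X_n})$ into character pieces (or isotypic components) for the $\Z/p^n\Z$-action. Because the tower has ramification invariant $d$ and is totally ramified at $\infty$, Riemann-Roch at the unique ramified point produces an explicit index set: pairs $(i, j) \in \Z_{>0}^2$ subject to $j \leq p^n - 1$ and $i$ bounded by the dimension of an appropriate space of rational functions on $\PP^1$ with pole orders dictated by the ramification datum. Each such pair corresponds to a basis differential $\omega_{i, j}$, and the total count recovers the genus of $X_n$ coming from the Riemann-Hurwitz formula applied to the tower.

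Next, I would compute the action of $V$ on this basis. On $\PP^1$ itself, $V(x^i\, dx) = x^{(i+1)/p - 1}\, dx$ when $p \mid i+1$ and is $0$ otherwise. Lifting through the cover, $V(\omega_{i,j})$ becomes a base-$p$ digit shift in both coordinates, and when $d \mid p - 1$ the coefficient $(p+1)/d$ governing the interaction between the two coordinates is a clean integer-scaled quantity. Iterating $r$ times, $\omega_{i,j} \in \ker V^r$ precisely when the $r$-fold-shifted index pair leaves the admissible Riemann-Roch range. Translating this exit condition splits the count into two regions: a triangular bulk region for $i \leq t_n$, where the $r$ shifts fit uniformly and one gets the first summand $r(p-1) t_n (t_n+1)/(2d)$ by summing a linear function of $i$; and a tail region for $i > t_n$, where the exit criterion depends more delicately on digits and contributes exactly $\#\Delta_n$.

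The main obstacle is pinning down the precise definition of $\mu(i)$. The threshold between the floor and ceiling cases comes from a boundary analysis of the iterated shift: when $j$ is near $(p+1)i/d$, whether $\omega_{i,j}$ lands in $\ker V^r$ hinges on a carry-propagation comparison between the base-$p$ digits of $(p+1)i/d$ lying above and below the radix point. The lex comparison $(i_n)_{n \geq 0} > (i_{-1-n})_{n \geq 0}$ is precisely the condition that distinguishes these boundary cases, and verifying that it matches whether $V^r$ annihilates $\omega_{i, \lfloor (p+1)i/d\rfloor}$ requires a careful digit-by-digit argument about how the iterated Cartier shift interacts with carries. This combinatorial identification is the heart of the theorem; once it is in place, combining the two regions produces the stated closed-form formula.
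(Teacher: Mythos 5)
The paper does not prove this theorem at all: it is cited verbatim as \cite[Theorem~1.1]{bckmu}, with \cite[Proposition~4.13]{bckmu} invoked to translate the definition of $\mu$ used there into the lexicographic form given in Definition~\ref{defn: mu}. So your proposal is attempting something the present paper deliberately treats as a black box.

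As a sketch of what the cited reference actually does, your outline points in roughly the right direction (isotypic decomposition of $\HH^0(X_n,\Omega^1)$ under the $\Z/p^n\Z$-action, an explicit basis of differentials indexed by pairs $(i,j)$, and the Cartier operator acting as a digit shift), but it is far from a proof, and the gaps are precisely at the places where the theorem lives. First, the claim that ``lifting through the cover, $V(\omega_{i,j})$ becomes a base-$p$ digit shift in both coordinates'' is asserted, not derived; getting that formula requires working with an explicit Artin--Schreier--Witt presentation of the tower, choosing a normalized Witt vector defining the cover, and carrying out a nontrivial computation of $V$ on an adapted basis --- this is the bulk of the technical work in the cited paper. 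Second, the appearance of the threshold $t_n = \lfloor dp^n/((r+1)p-(r-1))\rfloor$ and the quadratic term $r(p-1)t_n(t_n+1)/(2d)$ would have to emerge from that computation; you gesture at a ``triangular bulk region'' but never exhibit the admissible index set or the iterated-shift exit condition that produces exactly this count. Third, and most seriously, you yourself flag that the identification of the rounding rule for $\mu(i)$ with the lexicographic digit comparison is ``the heart of the theorem'' and that it ``requires a careful digit-by-digit argument'' --- and then you do not give that argument. Acknowledging a step is hard is not the same as doing it; as written, the proposal states what would need to be proved rather than proving it. If your goal were to reprove Theorem~\ref{thm: bckmu} rather than cite it, you would need to (a) fix a concrete Artin--Schreier--Witt model, (b) produce and verify the basis and the formula for $V$ on it, (c) derive $t_n$ and the quadratic term from the exit condition, and (d) carry out the carry-propagation analysis that yields the lex comparison in Definition~\ref{defn: mu}, matching \cite[Proposition~4.13]{bckmu}. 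None of (a)--(d) is carried out here.
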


    \begin{proof}
        This is \cite[Theorem 1.1]{bckmu}, combined with \cite[Proposition 4.13]{bckmu} to relate the definition of $\mu$ in that work to the one in \Cref{defn: mu}.
    \end{proof}

    In this paper, we connect \Cref{conj: bc} with \Cref{thm: bckmu} by carefully analyzing $\# \Delta_n$. In particular, we prove:

    \begin{theorem} \label{thm: main theorem}
        Let $\{X_n\}$ be a $\Z_p$-tower totally ramified over one point of $X_0 \simeq \PP^1$ with minimal break ratios and ramification invariant $d$.  If $d \mid p-1$ then there exist positive $N_r \in \Z$, $\lambda_r \in \Q$, and a periodic function $\nu_r(n) : \Z_{>0} \to \Q$ such that for $n \geq N_r$
        \begin{equation} \label{eq:maintheorem}
         a^r(X_n) = \frac{r}{r + \frac{p+1}{p-1}} \frac{d}{2(p+1)} p^{2n}+ \lambda_r n + \nu_r(n).
        \end{equation}
        Furthermore, writing $D_r$ for the prime-to-$p$ part of the denominator of $d/(r(p-1)+(p+1))$ in lowest terms,
         the minimal period of $\nu_r$ divides the least common multiple of $2$ and the order of $p$ modulo $D_r$.
    \end{theorem}

    This is a less precise version of \Cref{thm: main v.1}, which provides explicit expressions for $\lambda_r$, $N_r$, and the function $\nu_r(n)$ (and, to an extent, its period).       
    \Cref{thm: main theorem} confirms \Cref{conj: bc} when $d \mid p-1$.  This provides the first theoretical evidence for \Cref{conj: bc} in odd characteristic, as opposed to the weaker asymptotic conjecture established in \cite[Corollary 1.2]{bckmu}. 
Note that it is not obvious that the right side of Equation~\eqref{eq:maintheorem} is an integer.  The main $p^{2n}$ term, which also appears in the asymptotic conjecture, is manifestly not an integer but has a natural interpretation as the area of a triangle appearing in \cite{bckmu} and Section 1.1.  The higher $a$-numbers are clearly integers.  Therefore, to make the right side of \eqref{eq:maintheorem} an integer the rational number $\nu_r(n)$ must be somewhat complicated.

    \begin{example}
        When $p=3$ we can either have $d=1$ or $d=2$.  When $d=2$, \Cref{cor: d1d2} implies that there is a periodic function $\nu_r : \Z \to \Q$ such that
        \[
        a^r(X_n) = \frac{r}{4 (r+2)} 3^{2n} + \nu_r(n).
        \]
        for $n$ sufficiently large, and predicts the minimal period.   For some values of $r$ (i.e. $r=1,2,7$) the function $\nu_r$ is constant.  For others (i.e. $r=3,6$) it has period exactly two.  Other values of $r$ give larger minimal periods.  
        
        Similarly, when $d=1$ there is a periodic $\nu_r : \Z \to \Q$ such that  for sufficiently large $n$
        \[
        a^r(X_n) =  \frac{r}{8 (r+2)} 3^{2n} + \nu_r(n).
        \]
    \end{example}

    \begin{example} \label{ex: intro p5d4}
        When \(p = 5\), \(d=4\), and \(r=2\), we see that
        \[a^2(X_n) = \frac{4}{21} \cdot 5^{2n} + \frac{1}{3}n + \nu_2(n)\]
        where \(\nu_2(n)\) is given in \Cref{table: total p4d4 quasiconstant}.  Note that the values of $\nu_2(n)$ make $4 \cdot 5^{2n} + 7n + 21 \cdot \nu_2(n)$ a multiple of $21$ (necessary since $a^2(X_n)$ is an integer).
         Note that this is a case where the period given by \Cref{thm: main v.1} (6) is a multiple of the observed minimal period (3). 
        \begin{table}[H]
            \begin{tabular}{|l|p{1cm}|p{1cm}|p{1cm}|}
                \hline
                \(n \pmod{3}\)     & \(0\) & \(1\) & \(2\) \\ \hline
                \(\nu_2(n)\)
                & \(-4/21\) & \(-2/21\) & \(2/7\)           \\ \hline
            \end{tabular}
            \caption{Value of periodic function \(\nu_2(n)\) for \(n \geq 0.\)}
            \label{table: total p4d4 quasiconstant}
        \end{table}

These parameters will be used as a recurrent
example throughout the paper (see \Cref{ex: sum floors p5d4,ex: deltasum p5d4,ex: final p5d4}).
Explicit instances of towers
of curves realizing these parameters
 can be constructed using Artin-Schreier-Witt theory from the Witt vector equation 
        \[
        (y_0^p,y_1^p,\ldots,y_{n-1}^p) - (y_0,\ldots,y_{n-1}) =  \sum_{i=1}^d [c_i x^i,0,\ldots,0]
        \]
        where $c_i \in k$ and $c_d \neq 0$.  The curve $X_1$ is easy to describe: it is the Artin-Schreier curve determined by $y_0^p - y_0 = \sum c_i x^i$.  The others are more involved to describe explicitly, see \cite[Remark 3.4 and \S3.2]{bckmu} for a review.
    \end{example}

    \begin{example}
        In contrast, when $p=5$, $d = 4$, and $r=61$ then 
        \[
        a^r(X_n) = \frac{122}{375} 5^{2n} + \frac{2}{3}
        \]
        for $n \geq 3$, but the values for $n=1$ and $n=2$ are not given by the formula.  The more precise \Cref{thm: main v.1} predicts that $N_{61}=3$ (as $v_5(2/125) = -3$), which matches the actual behavior.  
    \end{example}

    \begin{remark}
        The case when $r=1$ is particularly simple, and \cite[\S6]{bckmu} shows (by directly computing $\# \Delta_n$) that if $d \mid p-1$ 
        \[
        a^1(X_n) = \frac{d (p-1)}{4(p+1)} (p^{2n-1}+1) - \begin{cases}
        0 & d \text{ even}\\
        \frac{p-1}{4d} & d \text{ odd}.
        \end{cases}
        \]
        The difficulty of generalizing that argument to other values of $r$ led to a project for students at the PROMYS program, and then to this paper.
    \end{remark}
    
    \begin{remark}
        The period for $a^r(X_n)$ seen in the precise version of \Cref{thm: main theorem} (i.e. \Cref{thm: main v.1}) is similar in spirit but slightly different than the conjectured period in \cite[Conjecture 1.2]{boohercais}.  In particular, the result we prove involves the order of $p$ modulo $D_r$ rather than the order of $p^2$, and also may differ by a factor of $2$.  In particular, the prediction for the period given in \cite[Conjecture 1.2]{boohercais} is not correct.
    \end{remark}

       \begin{remark}
        For any $\Z_p$-tower that is totally ramified over one point of $\PP^1$ and unramified elsewhere, the Deuring-Shafarevich formula shows that the $p$-rank of each curve in the tower is $0$.  Thus the $p$-part of the class group of the function field is trivial.  In contrast, \Cref{thm: main theorem} shows the higher $a$-numbers can grow regularly with $n$ in a non-trivial fashion.
    \end{remark}

    \begin{remark}
    For a curve $X$ of genus $g_X$, the higher $a$-numbers eventually stabilize: $a_X^r = g_X - f_X$ for large enough $r$, where $f_X$ is the $p$-rank of $X$.  More precisely, the $p$-rank is the stable rank of the Cartier operator and the stabilization must happen at least for $r \geq g_X-f_X$ by a simple (semi-)linear algebra argument.  Thus only finitely many of the higher $a$-numbers provide information.  (The smallest integer $r$ for which $a^r_X = a^{r+1}_X$ would be an interesting quantity to study.)
    In contrast, for a $\Zp$-tower as the genus of $X_n$ goes to infinity the higher $a$-numbers $a^r(X_n)$ are eventually interesting for any $r$.
    \end{remark}
    
    \begin{remark}
        The behavior of \'{e}tale $\Z_p$-towers is different, although $\Jac(X_n)[p^\infty]$ still depends on $n$ in a regular manner \cite{caisetale}. Such towers exist only over $\overline{\F}_p$, not over finite fields.  The (necessarily wild) ramification in our setting causes the genus (and higher $a$-numbers) to grow with $n$ at a much faster rate.
    \end{remark}

    \subsection{Idea of the Proof} \label{sec: idea of proof} 
        An elementary argument reinterprets the formula for the higher $a$-numbers  
        in \Cref{thm: bckmu} as the cardinality of
        \[
            \tDelta_n \colonequals \Delta_n \cup  \left \{ (i,j) \in \Z^2_{>0} : i \leq t_n \, \text{and} \, p^n - i r \frac{p-1}{d} \leq j \leq p^n -1 \right \}.
        \]
        The underlying idea in the proof is that $\# \tDelta_n$ is approximately the number of lattice points in the triangle $\cP_n$ bounded by $y = p^n$, $y = \frac{p+1}{d} x$, and $y = p^n - r \frac{p-1}{d}  x$.  See \Cref{fig: diagram} for an illustration.  If we let $\tau = \frac{p+1}{d}$ and $\gamma = \frac{p-1}{d} r +\tau$, then $p^n \gamma^{-1}$ is the $x$-coordinate of the lower vertex of the triangle (explaining the quantity $t_n$) and $p^n \tau^{-1}$ is the $x$-coordinate of the right vertex.
    
        \begin{figure}[ht]  
            \includegraphics[height=3in]{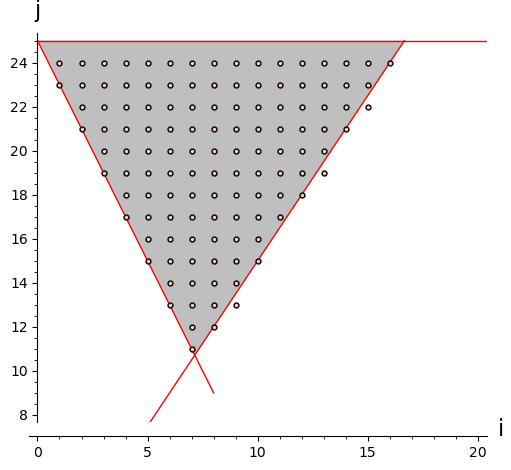}
            \caption{$\tDelta_2$ and $\cP_2$ (shaded) when $p=5$, $r=2$, and $d=4$.}
            \label{fig: diagram}
        \end{figure}

        The key difficulty is that there is irregular behavior along the side of the triangle given by $y = \frac{p+1}{d} x$: this arises from the condition $\mu(i) \leq j$ in the definition of $\Delta_n$.  While $\mu(i)$ is approximately equal to $\frac{p+1}{d} i$, it is subtle to determine whether to round up or down in \Cref{defn: mu}.  This controls whether the first lattice point with $x=i$ below the line $y = \frac{p+1}{d} x$ should be included in $\Delta_n$ or not.  The blue line in Figure~\ref{fig: diagram2} shows the ragged lower right boundary on $\Delta_n$ given by $\mu(i)$.

        \begin{figure}[ht]  
            \includegraphics[height=3in]{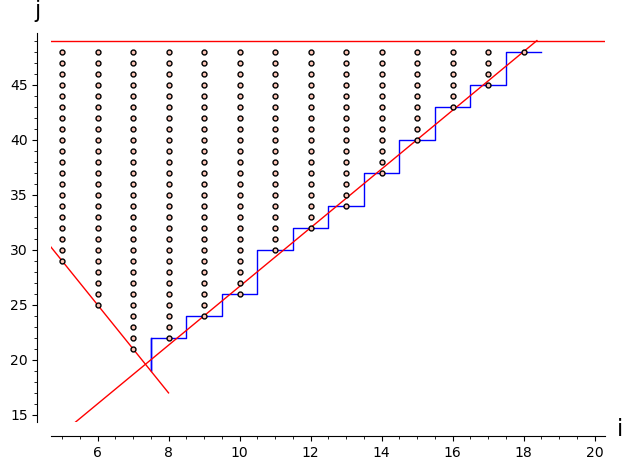}
            \caption{$\tDelta_2$ and $\cP_2$, showing $\mu_i$, when $p=7$, $r=2$, and $d=3$.}
            \label{fig: diagram2}
        \end{figure}       
        
        \Cref{subsec: basep and periodic} establishes properties of base-$p$ rationals and periodic functions ubiquitous throughout the following sections. In \Cref{sec: delta prop}, we define an indicator function $\delta(i)$ (taking on values $0$ and $1$) which reflects whether we round up or down in \Cref{defn: mu}.  The key insight is that $\delta$ is a periodic function and that we can compute its average value over a period.  This gives control over sums of the form $\sum_{i=m}^n \delta(i)$, which reflect the number of lattice points in \Cref{fig: diagram} that lie outside the triangle.
        In \Cref{sec: rewriting sum}, we essentially express higher $a$-numbers in terms of lattice points in $\cP_n$ and sums involving $\delta$.  The following version is most convenient: 
        \[
        a^r(X_n) =   \left(  \sum_{i=1 }^{\floor{\tau^{-1} p^n}} 
            (p^n - \floor{\tau i}) 
            -
            \sum_{i=1}^{\floor{\gamma^{-1} p^n}}  (p^n - \floor{\gamma i}) \right) - \left(\sum_{i=1}^{\floor{\tau^{-1} p^n}} \delta(i) - \sum_{i=1}^{\floor{\gamma^{-1} p^n}} \delta(i) \right).
        \]
        The sums involving floor functions count the number of lattice points in a variant of $\cP_n$: see \Cref{remark:boundary issues}.
        
        As discussed in \Cref{sec:erhart}, the theory of Ehrhart polynomials gives a natural approach to counting the number of lattice points in $\cP_n$, since \(\cP_n\) is a triangle with rational vertices and is the triangle \(\cP_0\) scaled-up by a factor of $p^n$. However, in \Cref{sec: sums floor} we give a direct proof using the same ideas as we use to study sums involving $\delta$ in \Cref{sec: sums delta}.  The basic idea is to exploit the periodicity of the function being summed and compare with the length of the sum.  The latter depends on $p^n$, which explains the appearance of the order of $p$ modulo $D_r$ in \Cref{thm: main theorem}.  We put everything together in \Cref{sec: main thm}, and show that the $p^n$-term in the various pieces cancel out in the resulting formula.
    
        In \Cref{sec: additional}, we prove additional properties about $\lambda_r$ and the minimal possible integer $N_r$ appearing in \Cref{thm: main theorem}. We end with some further properties of $a^r(X_n)$ we have observed from empirical data, but have been unable to prove. 

    \subsection{Acknowledgements}
We thank Bryden Cais and Joe Kramer-Miller for many helpful conversations over the course of this project.
We would like to thank the PROMYS program for running these research lab projects, with support from the PROMYS Foundation and the Clay Mathematics Institute. We also thank Jonah Mendel for mentoring the group at PROMYS in Boston.  Finally, we thank 
students Praveen Balakrishnan and Bertie Parkes and counselor Mario Marcos Losada at PROMYS Europe for helpful conversations about this project.

\section{Preliminaries}

\subsection{Notation}
 Fix an odd prime $p$, a positive integer $d$ dividing $p-1$, and a positive integer $r$.
    
    \begin{notation} \label{defn: tau and gamma}
         We define 
        \begin{enumerate}[(i)]
            \item $\tau = \frac{p+1}{d}$
            \item \(\gamma = \frac{p-1}{d}r + \tau = \frac{(p-1)r + (p+1)}{d}\).
        \end{enumerate}
    \end{notation}
    
    Note that, by hypothesis, \(\gamma - \tau\) is integral and \(\tau, \gamma\) are positive. Furthermore, recalling \Cref{defn: tn}, we have \(t_n =  \floor{\gamma^{-1} p^n}\).
    
        \begin{notation} \label{defn: ND notation}
            Let $x$ be a positive rational number. 
            We let $x_N,x_D$ be the unique coprime positive integers such that 
            \begin{equation*}
                x = p^{v_p(x)} \frac{x_N}{x_D},
            \end{equation*}
            where as usual $v_p(x)$ is the $p$-adic valuation of $x$.
        \end{notation}
    
    Since $d \mid p-1$ and hence \(\gcd(d, p+1) = \gcd(d,2)\), we see that 
     \(\tau_D = \frac{d}{\gcd(d,2)}\) and  \(\tau_N = \frac{p+1}{\gcd(d,2)}\).

We end with a list of references for the other notation defined elsewhere. 

     \begin{itemize}
         \item The set of lattice points $\Delta_n$ is defined in \Cref{defn: tn}, and its variant $\widetilde{\Delta}_n$ in \Cref{defn:  tDelta}.  The functions $\mu(i)$ and $\delta(i)$
    describe the boundary of $\Delta_n$, and are defined in \Cref{defn: mu} and \Cref{defn: deltas}.  The latter also defines the variants $\delta_0(i)$ and $\widetilde{\delta}(i)$.

    \item  For a periodic function $f$, $\overline{f}$ denotes the average value (\Cref{defn: periodic & average value}).

    \item  For a positive rational number $x$, $\overline{x}$ denotes the average value of the repeating part of its base-$p$ expansion (\Cref{defn: Lx and Dx}).  The period of its repeating part is denoted $L_x$, and $\getdigit[k]{x}$ is the $k$th digit in the base-$p$ expansion.  The fractional part of $x$ is $\fracpart{x}$.

    \item  For a positive rational number $x$, the functions $A_{x^{-1}}$, $B_{x^{-1}}$, and $F_{x^{-1}}$ appear over the course of Section~\ref{sec:computinghigheranumbers} in \Cref{prop: single floor sum}, \Cref{prop: delta sum}, and \Cref{lemma: delta sum for exponent}.

    \item  The exact formula for $\lambda_r$ appears in \Cref{thm: main v.1}.
         
     \end{itemize}

    \subsection{Base-\texorpdfstring{$p$}{p} expansions and Periodic Sequences} \label{subsec: basep and periodic}
    
        \begin{definition} \label{defn: periodic & average value}
            Let $(f_i)_{i \geq 1}$ be a sequence of real numbers. We say that $f$ is \emph{periodic} for \(i \geq D\) if, for some $L \in \mathbb{Z}$ and all $i \geq D$, we have
            \begin{equation*}
                f_{i+L} = f_i.
            \end{equation*}
            We will also sometimes less precisely call such functions \emph{eventually periodic}.
            In this case we define the \emph{average value} of $f$ over its period to be
            \begin{equation*}
                \avg{f} \text{ or } \avg{f_i} \colonequals \frac{1}{L} \sum_{i = D + 1}^{D+L} f_i.
            \end{equation*}
        \end{definition}

    Note that the average value of a periodic function is independent of the choice of $L$ and $D$.
        We may equivalently view the sequence as a function $f: \N \to \R$.  It is harmless to start indexing sequences at $0$ instead of $1$ if convenient.
        We record the following well-known result; the proof is omitted.
    
        \begin{lemma}[Sum of a periodic sequence]   \label{lemma: periodic sum}
            Let $f = (f_i)_{i \in \N}$ be a periodic sequence with period $L$. For any $N \geq 0$, we have
            \[
            \sum_{i=1}^N f_i = N \avg{f} + \sum_{k=1}^{N \bmod L} (f_k - \avg{f}).
            \] 
        \end{lemma}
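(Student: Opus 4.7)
The plan is to reduce the sum to a count of complete periods plus a boundary correction, and then verify that the two expressions agree by a short calculation.

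First I would write $N = qL + s$ where $s = N \bmod L$ and $0 \leq s < L$, and split the sum
\[
\sum_{i=1}^N f_i = \sum_{j=0}^{q-1} \sum_{k=1}^L f_{jL + k} + \sum_{k=1}^s f_{qL + k}.
\]
By the periodicity hypothesis $f_{i+L} = f_i$, each of the $q$ inner sums equals $\sum_{k=1}^L f_k = L \avg{f}$ by the definition of the average value, and the tail sum equals $\sum_{k=1}^s f_k$. Hence
\[
\sum_{i=1}^N f_i = qL \avg{f} + \sum_{k=1}^s f_k.
\]

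Next I would rewrite the right-hand side of the claimed identity by distributing the average:
\[
N \avg{f} + \sum_{k=1}^s (f_k - \avg{f}) = (N - s)\avg{f} + \sum_{k=1}^s f_k = qL \avg{f} + \sum_{k=1}^s f_k,
\]
where in the last equality I use $N - s = qL$. Comparing with the formula derived above completes the proof.

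The argument is essentially bookkeeping, so there is no serious obstacle. The only minor point worth flagging is that the definition of $\avg{f}$ given in the paper uses indices $D+1, \ldots, D+L$, so strictly speaking I should note that when $f$ is genuinely periodic from the start (i.e.\ $D = 0$ suffices), any choice of $L$ consecutive indices yields the same average by periodicity, which justifies using $\sum_{k=1}^L f_k = L\avg{f}$. If instead one wishes to state the lemma in the more general ``eventually periodic'' setting, one would restrict to $N \geq D$ and absorb the pre-periodic part into an additive constant, but the statement as given is cleanest in the fully periodic case.
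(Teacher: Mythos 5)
Your proof is correct and takes essentially the same approach as the paper: both decompose $N = qL + s$, use periodicity to handle the $q$ complete blocks, and account for the partial tail. The only cosmetic difference is that the paper subtracts $\avg{f}$ from the summand before grouping into blocks, whereas you compute the block sums directly and then verify the algebraic identity; the underlying argument is identical.
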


        As many of the sums we will deal with are only eventually periodic, it is useful to also have a formula for a sequence that is periodic after a delay.
        
        \begin{corollary}[Sum of an eventually periodic sequence]   \label{cor: eventually periodic sum}
            Let $(f_i)_{i \in \N}$ be periodic for \(i \geq D\) with period $L$. For \(N \geq D - 1\),
            \[\sum_{i=1}^N f_i = \sum_{i=1}^{D-1} f_i \quad+ \paren{(N - D + 1)\avg{f} \quad+ \sum_{k=1}^{(N - D + 1) \bmod L} (f_{k + D - 1} - \avg{f} )}.\]
        \end{corollary}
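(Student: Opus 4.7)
The plan is to reduce the claim directly to \Cref{lemma: periodic sum} by splitting off the pre-periodic initial segment and re-indexing the tail. I would start by writing
\[
\sum_{i=1}^N f_i \;=\; \sum_{i=1}^{D-1} f_i \;+\; \sum_{i=D}^N f_i.
\]
The first term already appears verbatim on the right-hand side of the claim, so the work is to rewrite the tail $\sum_{i=D}^N f_i$ in the stated form.

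For that tail, I would define a shifted sequence $g_j \colonequals f_{j + D - 1}$ for $j \geq 1$. By the hypothesis that $f$ is periodic for $i \geq D$ with period $L$, the sequence $g$ is periodic from $j = 1$ with the same period $L$. Setting $M \colonequals N - D + 1$ (which is a nonnegative integer by the hypothesis $N \geq D - 1$), I have $\sum_{i=D}^N f_i = \sum_{j=1}^M g_j$, and then \Cref{lemma: periodic sum} applied to $g$ yields
\[
\sum_{j=1}^M g_j \;=\; M\,\avg{g} \;+\; \sum_{k=1}^{M \bmod L} \paren{g_k - \avg{g}}.
\]
A small but essential observation is that $\avg{g} = \avg{f}$: both equal the mean of any $L$ consecutive values of $f$ taken in its periodic regime, and that mean is independent of the chosen starting index because of periodicity. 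Unpacking $g_k = f_{k + D - 1}$ and $M = N - D + 1$, and adding back $\sum_{i=1}^{D-1} f_i$, gives exactly the claimed formula.

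The main obstacle, such as it is, is purely bookkeeping: one must confirm that the shift lines up with the paper's definition of $\avg{f}$ and handle the degenerate case $M = 0$ (i.e. $N = D - 1$), in which both the $M\avg{f}$ term and the empty sum $\sum_{k=1}^{0}(\cdots)$ vanish, recovering $\sum_{i=1}^{D-1} f_i$ as expected. There is no genuine difficulty beyond this verification; the corollary is essentially a restatement of the lemma after absorbing the non-periodic head.
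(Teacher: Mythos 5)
Your proof is correct and takes the same approach as the paper's: split off the pre-periodic head $\sum_{i=1}^{D-1} f_i$ and apply \Cref{lemma: periodic sum} to the shifted (immediately periodic) tail $f_D, f_{D+1}, \dots$. The paper states this in one line; you simply make the reindexing and the identification $\avg{g} = \avg{f}$ explicit, which is a harmless expansion of the same argument.
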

        
        \begin{proof}
            Noting that
            \[
                \sum_{i=1}^N f_i = 
                \sum_{i=1}^{D-1} f_i + \sum_{i=D}^N f_i,
            \]
            apply \Cref{lemma: periodic sum} to the sequence $f_{D},f_{D+1},\dots$, which by hypothesis is periodic with period $L$.
        \end{proof}
    
        \begin{definition}
            Let $x$ be a positive rational number. For any integer $k \in \Z$, we define $\getdigit[k]{x}$ to be the unique integer in $\{0,\dots,p-1\}$ such that
        \begin{equation*}
            \getdigit[k]{x} \equiv \left \lfloor \frac{x}{p^k} \right\rfloor \pmod{p}. 
        \end{equation*}
        \end{definition}
        
        In other words, $\getdigit[k]{x}$ is the $k$th digit of the base-$p$ expansion of $x$, so that
        \begin{equation*}
            x = \sum_{k \in \Z} \getdigit[k]{x} p^k.
        \end{equation*}
        Since $x$ is rational, the sequence $(\getdigit[-k]{x})_{k \in \N}$ is eventually periodic \cite[Theorem 136]{hardywright}.
    
        \begin{definition}  \label{defn: Lx and Dx}
            Let $x$ be a positive rational number. We set $L_x$ to be the minimal period of the eventually periodic sequence $\getdigit[-k]{x}$.  We define the \emph{delay} $D_x$ to be the minimal $D$ such that the sequence is periodic for $k \geq D + 1$.  We also set $\avg{x} = \avg{\getdigit[-k]{x}}$, where the bar is used to indicate the average value as in \Cref{defn: periodic & average value}.
        \end{definition}

        \begin{example}
        In base-$5$
        \[
        \frac{2}{3} = .3131\ldots \quad \text{and} \quad \frac{2}{7} = .120324120324\ldots
        \]
        Thus $L_{2/3} = 2$, $D_{2/3} = 0$, and $\avg{2/3} = (3+1)/2 = 2$.  Similarly $L_{2/7} = 6$, $D_{2/7}=0$, and $\avg{2/7} = 2$.  On the other hand, the (base-$10$) rational number $x = 2803/672$ has base-$7$ expansion $4.1124612461\ldots$ with delay $D_{x}=1$ and period $L_x=4$.
        \end{example}

        Recall that $x_D$ is defined in \Cref{defn: ND notation}.

        \begin{lemma}   \label{lemma: Lx}
           Let $x$ be a positive rational number. Then $L_x$ is the order of $p$ in $(\Z/x_D\Z)^\times$ and $D_x =  -v_p(x)$.
        \end{lemma}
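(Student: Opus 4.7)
The plan is to reduce to a purely periodic base-$p$ expansion by factoring out the $p$-adic part of $x$, then transport the result through a digit shift. Setting $y \colonequals x_N/x_D$, we have $x = p^{v_p(x)} y$ with $v_p(y) = 0$, and multiplication by $p^{v_p(x)}$ shifts the expansion so that $\getdigit[-k]{x} = \getdigit[-k - v_p(x)]{y}$ for every integer $k$.

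First I would show that the negative-position digits of $y$ are purely periodic with minimal period $L \colonequals \mathrm{ord}_{x_D}(p)$. Replacing $y$ by its fractional part $\{y\} = m/x_D$, where $m \colonequals x_N \bmod x_D$ (still coprime to $x_D$), does not affect these digits. The identity $p^L \{y\} - \{y\} = (p^L - 1)m/x_D \in \Z$, which uses $x_D \mid p^L - 1$, forces $p^L\{y\}$ and $\{y\}$ to share their negative-position digits and hence gives periodicity with period $L$. Minimality follows because any period $L'$ similarly forces $x_D \mid p^{L'} - 1$ via coprimality of $m$ and $x_D$, so $L \mid L'$.

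Next I would combine this with the shift: the sequence $(\getdigit[-k]{x})_{k \geq 1}$ coincides with the purely periodic fractional digits of $y$ once $k \geq 1 - v_p(x)$, giving $L_x = L$ and $D_x \leq \max(0, -v_p(x))$. For the matching lower bound when $v_p(x) \leq 0$, it suffices to check that the integer-part digit $\getdigit[0]{y}$ differs from the periodic continuation $\getdigit[-L]{y}$. A short modulo-$p$ computation, using $(p^L - 1)/x_D \equiv -x_D^{-1} \pmod p$, yields $\getdigit[-L]{y} \equiv -m x_D^{-1}$ and $\getdigit[0]{y} \equiv (x_N - m)x_D^{-1}$ modulo $p$; equality would force $p \mid x_N$, contradicting $\gcd(x_N, p) = 1$.

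The main obstacle is precisely this last coprimality argument: a priori the integer-part digits of $y$ could fortuitously extend the periodic fractional pattern and shrink $D_x$ below $-v_p(x)$. The single check at the decisive position $\getdigit[0]{y}$ is enough to rule this out, since any putative periodicity beginning at an index $k \leq -v_p(x)$ would include exactly that failing step.
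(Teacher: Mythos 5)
Your proof is correct, but diverges from the paper's on the delay. The paper proves both claims at once from a single observation: periodicity of $(\getdigit[-k]{x})_{k > D}$ with period $L$ holds if and only if $p^D(p^L x - x) = p^{D+v_p(x)}(p^L - 1)\frac{x_N}{x_D} \in \Z$, and since $p$, $x_N$, $x_D$ are pairwise coprime this reduces to $x_D \mid p^L - 1$ and $D + v_p(x) \geq 0$, giving both $L_x$ and $D_x$ simultaneously. You derive the period characterization by an essentially identical identity on $y = x_N/x_D$; but for the delay, rather than reading off the constraint $D \geq -v_p(x)$ from the $p$-adic valuation, you explicitly compute the digits $\getdigit[0]{y}$ and $\getdigit[-L]{y}$ modulo $p$ and observe they must differ (else $p \mid x_N$). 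Your calculation is right and furnishes a concrete certificate that the pre-period really does extend to position $v_p(x)$, but it costs one extra step: to justify that checking this single digit pair suffices, you should also invoke the (standard but unstated) fact that any period valid for a shorter delay is a multiple of the minimal period $L$, so $L$-periodicity itself would have to extend further left; your phrase ``any putative periodicity beginning at an index $k \leq -v_p(x)$ would include exactly that failing step'' implicitly uses this. You also correctly observe $D_x = \max(0, -v_p(x))$ in general; the lemma's stated equality tacitly assumes $v_p(x) \leq 0$, which holds in all its applications.
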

    
        \begin{proof}
        This is well-known, see for example \cite[Thm 136]{hardywright}.
        \end{proof}
    
        \begin{corollary}   \label{cor: period1}
            For any positive integer $m$, $\frac{m}{d}$ has a periodic base-$p$ expansion with period $1$.
        \end{corollary}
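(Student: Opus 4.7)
The plan is to apply the previous lemma (\Cref{lemma: Lx}) directly, which reduces the statement to a divisibility check. Writing $x = m/d$, the lemma identifies $L_x$ with the multiplicative order of $p$ modulo $x_D$, where $x_D$ is the prime-to-$p$ part of the denominator of $x$ in lowest terms as in \Cref{defn: ND notation}. So it suffices to show $p \equiv 1 \pmod{x_D}$.

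First I would observe that since $d \mid p-1$ we have $\gcd(d,p) = 1$, so $v_p(m/d) = v_p(m) \geq 0$ and no factor of $p$ enters the denominator. Consequently, when we reduce $m/d$ to lowest terms, the denominator $x_D$ is a divisor of $d$. Since $d \mid p-1$ by the standing hypothesis in the Preliminaries, we get $x_D \mid p-1$, hence $p \equiv 1 \pmod{x_D}$. Thus the order of $p$ modulo $x_D$ is $1$ (with the convention that the order modulo $1$ is $1$), so $L_{m/d} = 1$ by \Cref{lemma: Lx}. There is essentially no obstacle here — the content is entirely in the previously established \Cref{lemma: Lx}, and this corollary just records the special case used repeatedly in the sequel.
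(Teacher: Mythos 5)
Your proof is correct and matches the paper's proof essentially verbatim: both apply \Cref{lemma: Lx} after noting that $x_D \mid d \mid p-1$ forces $p \equiv 1 \pmod{x_D}$. The extra remarks you include (that $\gcd(d,p)=1$ keeps $p$ out of the denominator, and the convention for order modulo $1$) are harmless elaborations the paper leaves implicit.
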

    
        \begin{proof}
            Let $x = \frac{m}{d}$, so $x_D$ divides $d$ which divides $p-1$.  Thus $p \equiv 1 \pmod{x_D}$ has order one.
        \end{proof}
    
    \subsection{Properties of \texorpdfstring{$\delta$}{\textbackslash delta}}  \label{sec: delta prop}
        Our next goal is to study when the two cases in \Cref{defn: mu} occur.  We make use of the lexicographic ordering on sequences (recall Equation~\eqref{eq:lexicographic}).

        \begin{definition}  \label{defn: deltas}
            We define a function \(\delta: \N \to \{0, 1\}\) by
            \begin{equation*}
                \delta(i) \colonequals 
                \begin{cases}
                    0 & 
                    \text{if}\ (\getdigit[n]{\tau i})_{n\geq 0} > (\getdigit[-1-n]{\tau i})_{n \geq 0} %
                \\
                    1 & \text{otherwise}.\\
                \end{cases}
            \end{equation*}

            We define variants $\delta_0, \tdelta : \N \to \{0,1\}$ by
            \[
            \delta_0(i) \colonequals 
            \begin{cases}
                0 & \text{if } p \mid i    \\
                \delta(i) & \text{otherwise}
            \end{cases} \quad \text{and} \quad
            \tdelta(i) \colonequals 
            \begin{cases}
                1 & \text{if } \tau_D \mid i \\
                \delta(i) & \text{otherwise}.
            \end{cases}     
            \]
        \end{definition}

        Recalling \Cref{defn: mu}, it is immediate that $\mu(i) = \floor{\frac{p+1}{d} i} + \delta(i)$.  Note that if $\frac{p+1}{d}i$ is an integer (i.e. $\tau_D \mid  i$) we have $\delta(i)=0$ while $\tdelta(i) = 1$, so we may similarly write $\mu(i) = \left \lceil \frac{p+1}{d} i \right \rceil  - 1  + \tdelta(i)$.  
    
        \begin{example}
            \Cref{table: values delta} shows $\delta(i)$ and $\delta_0(i)$ for small $i$ when $p=5$ and $d=4$.  In this case $\tau_D= 2$.
        \end{example}
    
        \begin{table}[ht]
            \begin{tabular}{l|lllllllllllllllllll}
                $i$ & $1$ & $2$ & $3$ & $4$ & $5$ & $6$ & $7$ & $8$ & $9$ & $10$ & $11$ & $12$ & $13$ & $14$ & $15$ & $16$ & $17$ & $18$ & $19$ \\ \hline 
                $\delta(i)$ & $1$ & $0$ & $0$ & $0$ & $1$ & $0$ & $1$ & $0$ & $0$ & $0$ & $1$ & $0$ & $0$ & $0$ & $0$ & $0$ & $1$ & $0$ & $0$ \\
                $\delta_0(i)$ &$1$ & $0$ & $0$ & $0$ & $0$ & $0$ & $1$ & $0$ & $0$ & $0$ & $1$ & $0$ & $0$ & $0$ & $0$ & $0$ & $1$ & $0$ & $0$ \\
                $\tdelta(i)$ & $1$ & $1$ & $0$ & $1$ & $1$ & $1$ & $1$ & $1$ & $0$ & $1$ & $1$ & $1$ & $0$ & $1$ & $0$ & $1$ & $1$ & $1$ & $0$ \\
            \end{tabular}
            \caption{Values of $\delta(i)$ with $d=4$, $p=5$.}
            \label{table: values delta}
        \end{table}
    
        We now turn to studying $\delta(i)$.
        
        \begin{lemma}   \label{lemma: digit comparison for multiple of p}
            For a positive integer $i$, we have that  \(\getdigit[-1]{\tau i} = \getdigit[0]{\tau i}\) if and only if  $p \mid  i$ .
        \end{lemma}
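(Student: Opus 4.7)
The plan is to exploit the key structural fact that $\tau_D$ divides $p-1$, so by \Cref{cor: period1} the base-$p$ expansion of $\{\tau i\}$ has period one; all digits to the right of the radix point coincide. Writing $c := \getdigit[-1]{\tau i}$, summing the geometric series gives $\{\tau i\} = c \sum_{k\geq 1} p^{-k} = c/(p-1)$, equivalently $c = (p-1)\{\tau i\}$.

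Next, I would rewrite $\{\tau i\}$ in an arithmetically convenient form. From $\tau i = \frac{(p+1)i}{d} = \frac{2i}{d} + \frac{(p-1)i}{d}$ and the fact that $\frac{p-1}{d} \in \Z$, we have $\{\tau i\} = \{2i/d\}$. With $s := i \bmod d$, this fractional part is either $2s/d$ or $(2s-d)/d$, so $d\{\tau i\}$ is always an integer lying in $\{0,1,\dots,d-1\}$.

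The final step converts the target equality $\getdigit[0]{\tau i} = c$ into a congruence modulo $p$. Both sides lie in $\{0,\dots,p-1\}$, so this equality is equivalent to $\lfloor \tau i \rfloor \equiv c \pmod{p}$. Splitting $\tau i = \lfloor \tau i \rfloor + \{\tau i\}$ and using the identity $d \tau i = (p+1)i$ yields the integer relation $d\lfloor \tau i \rfloor + d\{\tau i\} = (p+1)i$; reducing mod $p$ gives $d\lfloor \tau i \rfloor \equiv i - d\{\tau i\} \pmod{p}$. On the other hand, $dc = (p-1)\cdot d\{\tau i\} \equiv -d\{\tau i\} \pmod{p}$. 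Combining these, the condition $d\lfloor \tau i \rfloor \equiv dc \pmod{p}$ collapses to $i \equiv 0 \pmod{p}$; since $d \mid p-1$, the factor $d$ is a unit mod $p$ and may be cancelled from the original congruence. This gives $\getdigit[0]{\tau i} = \getdigit[-1]{\tau i}$ iff $p \mid i$.

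The main delicate point is keeping track of which quantities are integers: $d\{\tau i\}$ is an integer even though $\{\tau i\}$ is not, and this is precisely what lets us pass the relation $d\tau i = (p+1)i$ to a congruence modulo $p$. Once the period-one observation and this integrality are in place, the claim follows from a short manipulation.
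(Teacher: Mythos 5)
Your argument is correct and follows essentially the same route as the paper: both exploit the period-one base-$p$ expansion to identify $\{\tau i\}$ with the repeating digit over $p-1$, then convert the digit equality into a congruence modulo $p$ and cancel the unit factor coming from $\tau$. The only cosmetic difference is that you multiply through by $d$ to stay in $\Z$ (and pass through the observation $\{\tau i\}=\{2i/d\}$, which is harmless but not strictly needed once you note $d\{\tau i\}=d\tau i - d\lfloor\tau i\rfloor\in\Z$), whereas the paper works directly with $p$-integral rationals modulo $p$.
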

    
        \begin{proof}
            Since $d\mid p-1$, we know from \Cref{cor: period1} that the base-$p$ expansion of $\tau i$ consists of a single repeating digit $h$.  Then we see the fractional part of $\tau i$ is \(\fracpart{\tau i} = 0.\bar{h} = \frac{h}{p-1}\) and 
            hence \(\getdigit[0]{\tau i} \equiv \left(\tau i - \frac{h}{p-1} \right) \pmod{p}\).
                We then see
                \begin{alignat*}{2}
                    \getdigit[0]{\tau i} = \getdigit[-1]{\tau i}
                    &\iff
                    \tau i - \frac{h}{p-1} &&\equiv h \pmod p
                \\
                    &\iff
                    \tau i + h &&\equiv h \pmod p
                \\
                    &\iff
                    \tau i &&\equiv 0 \pmod p
                \\
                    &\iff 
                    i &&\equiv 0 \pmod{p}
                \end{alignat*}
            where the last step uses that $p+1$ and $d$ are units modulo $p$.  
        \end{proof}
            
        \begin{lemma}   \label{lemma: digit comparison for non-multiple of p}
            Suppose \(p \: \nmid \: i\). Then \(\getdigit[-1]{\tau i} > \getdigit[0]{\tau i} \iff \delta(i) = 1\).
        \end{lemma}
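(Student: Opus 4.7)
The strategy is to show that the hypothesis $p \nmid i$, combined with the very restricted base-$p$ structure of $\tau i = (p+1)i/d$, collapses the lexicographic comparison hidden in the definition of $\delta(i)$ to a single-digit comparison at index $0$.

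First I would observe that the denominator of $\tau i$ divides $d$, which in turn divides $p-1$; hence by \Cref{cor: period1} the base-$p$ expansion of $\tau i$ is periodic with period $1$ to the right of the radix point. Equivalently, there is a single digit $h \in \{0, 1, \dots, p-1\}$ with $\getdigit[-1-n]{\tau i} = h$ for every $n \geq 0$, and this $h$ equals $\getdigit[-1]{\tau i}$. Thus the sequence $(i_{-1-n})_{n \geq 0}$ appearing in \Cref{defn: mu} is simply the constant sequence $(h, h, h, \dots)$.

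Next I would apply \Cref{lemma: digit comparison for multiple of p} in contrapositive form: the assumption $p \nmid i$ forces $\getdigit[0]{\tau i} \neq \getdigit[-1]{\tau i} = h$. Consequently the sequences $(i_n)_{n \geq 0}$ and $(h, h, \dots)$ already disagree at index $0$, so the lexicographic order between them is decided entirely at that index. Combined with the identity $\mu(i) = \lfloor \tau i \rfloor + \delta(i)$ recorded immediately after \Cref{defn: deltas}, this gives $\delta(i) = 0 \iff (i_n)_{n \geq 0} > (i_{-1-n})_{n \geq 0}$ in lex $\iff \getdigit[0]{\tau i} > h$. Negating, $\delta(i) = 1 \iff h > \getdigit[0]{\tau i}$, which is precisely the claim $\getdigit[-1]{\tau i} > \getdigit[0]{\tau i}$.

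I do not anticipate any real obstacle here: once \Cref{cor: period1} trivializes the fractional tail of $\tau i$ and \Cref{lemma: digit comparison for multiple of p} guarantees an inequality at the decisive index, the lemma reduces to a one-line case analysis on the very first digit comparison. The nontrivial input has already been done in the preceding lemma.
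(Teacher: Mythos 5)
Your proposal is correct and follows essentially the same route as the paper: invoke \Cref{lemma: digit comparison for multiple of p} to get $\getdigit[0]{\tau i} \neq \getdigit[-1]{\tau i}$, observe that this disagreement at the first index decides the lexicographic comparison, and conclude via the identification of $\delta(i)$ with that comparison. The extra observation about the constant repeating tail $h$ is correct but not strictly needed, since the lexicographic verdict is already determined at index $0$.
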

            
        \begin{proof}
            As \(\getdigit[-1]{\tau i} \ne \getdigit[0]{\tau i}\) by \Cref{lemma: digit comparison for multiple of p}, we see that 
            \begin{align*}
                \getdigit[-1]{\tau i} > \getdigit[0]{\tau i} &\iff
                (i_n)_{n \geq 0} \leq (i_{-1-n})_{n \geq 0}
            \\
                &\iff \delta(i) = 1. \qedhere
            \end{align*}
        \end{proof}

              \begin{proposition} \label{prop: delta negation reduction}
            For an integer $i$ with $0 \leq i < \tau_Dp$ and $p \nmid i$:
            \begin{enumerate}[(i)]
                \item  if $\tau_D \nmid i$ then \(\delta(i) + \delta(\tau_Dp- i) = 1\). 
                \item  if $\tau_D \mid  i$ then $\delta(i) = \delta(\tau_Dp-i) =0$.
            \end{enumerate}
            For an integer $j$ with $0 \leq j < d$ and $p \nmid j$:
            \begin{enumerate}[(i)]      \setcounter{enumi}{2}
                \item if $\tau_D \nmid j$ then \( \delta(j) + \delta(d -j) = 1 \).
                \item  if $\tau_D \mid  j$ then $\delta(j) =0$.
            \end{enumerate}
        \end{proposition}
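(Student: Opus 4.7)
My plan is to reduce everything to the digit comparison furnished by \Cref{lemma: digit comparison for non-multiple of p}: for $p\nmid i$, $\delta(i)=1$ iff $[p^{-1}](\tau i) > [p^0](\tau i)$. So the entire argument is to compute the two digits $[p^{-1}](\tau i')$ and $[p^0](\tau i')$ for $i' = \tau_D p - i$ (parts (i),(ii)) or $i' = d-j$ (parts (iii),(iv)) in terms of the digits of $\tau i$, and then apply the lemma.

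Using \Cref{cor: period1}, I would write $\tau i = M + h/(p-1)$ where $M = \floor{\tau i}$ and $h = [p^{-1}](\tau i) \in \{0, 1, \ldots, p-2\}$, noting that $h > 0$ iff $\tau_D \nmid i$. The core computation is the subtraction identity: for any integer $C > \tau i$,
\[
C - \tau i \;=\; \begin{cases} (C - M - 1) + (p - 1 - h)/(p-1) & \text{if } h > 0, \\ C - M & \text{if } h = 0. \end{cases}
\]
Applying this with $C = \tau_N p$ (using $\tau\cdot\tau_D p = \tau_N p$) gives the digits of $\tau(\tau_D p - i)$; applying it with $C = p+1$ (using $\tau d = p+1$) gives the digits of $\tau(d-j)$. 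In the $h>0$ cases, I read off that the single repeating digit of $\tau i'$ is $h' = p - 1 - h$, the ones digit is $[p^0](\tau i') = p - 1 - a$ in case (i) and $[p^0](\tau j') = p - a$ in case (iii), where $a := [p^0](\tau i)$. For case (iii) a quick bound using $1 \leq j \leq d - 1$ and $d \leq p - 1$ gives $1 \leq M \leq p - 1$, so $a = M > 0$ and the formula $[p^0](\tau j') = p - a$ is valid.

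For part (i), by \Cref{lemma: digit comparison for non-multiple of p} applied to $i$ and $i'$, $\delta(i) = 1$ iff $h > a$ and $\delta(i') = 1$ iff $p - 1 - h > p - 1 - a$ iff $a > h$. Since $a \neq h$ by \Cref{lemma: digit comparison for multiple of p} (as $p\nmid i$), exactly one holds and the sum is $1$. For part (iii), the same lemma gives $\delta(j) = 1$ iff $h > a$ and $\delta(j') = 1$ iff $p - 1 - h > p - a$ iff $a > h + 1$. To conclude exactly one holds I must rule out the boundary case $a = h + 1$; this follows from \Cref{lemma: digit comparison for multiple of p} applied to $j'$ (as $p\nmid d-j$ since $1 \leq d - j \leq d - 1 < p$), which forces $p - 1 - h \neq p - a$, i.e., $a \neq h + 1$.

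For parts (ii) and (iv), $h = 0$ and $\tau i$ (resp.\ $\tau j$) is an integer, and I would verify $p \nmid M$ in each subcase, so that $a > 0 = h$ and \Cref{lemma: digit comparison for non-multiple of p} yields $\delta = 0$. In case (ii), $i = \tau_D k$ with $k \in \{1,\ldots,p-1\}$ and $M = \tau_N k$ with $\gcd(\tau_N, p) = 1 = \gcd(k,p)$; the same argument applies to $i' = \tau_D(p-k)$, giving $\delta(i) = \delta(i') = 0$. In case (iv), $\tau_D\mid j$ with $0 < j < d$ forces $d$ to be even and $j = d/2$, with $M = (p+1)/2 < p$, so again $\delta(j) = 0$. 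The main (minor) obstacle throughout is the boundary analysis showing $a \neq h + 1$ in case (iii); the rest is a direct base-$p$ digit computation.
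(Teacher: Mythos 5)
Your proof is correct and follows essentially the same approach as the paper's: compute the $\getdigit[0]{\cdot}$ and $\getdigit[-1]{\cdot}$ digits of $\tau(\tau_D p - i)$ and $\tau(d-j)$ by base-$p$ borrowing, then apply \Cref{lemma: digit comparison for non-multiple of p}. You also correctly obtain $\getdigit[-1]{\tau j}+\getdigit[-1]{\tau(d-j)}=p-1$ (the corresponding display in the paper's proof says $p$, which is a typo), which is precisely why one must, as you do, rule out the boundary case $a=h+1$ by applying \Cref{lemma: digit comparison for multiple of p} to $d-j$ --- a step the paper's argument, relying on its typo, glosses over.
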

        
        \begin{proof}
            We begin with the first statement.  Note that $\tau(\tau_Dp-i) = \tau_Np-\tau i$. Because $\tau_Np$ is integral and $\tau i \notin \mathbb{Z}$, when computing $\tau_Np-\tau i$ we must borrow from the ones place and so
            \begin{align*}
                \getdigit[-1]{\tau_Np-\tau i} &= (p-1) - \getdigit[-1]{\tau i}.
            \end{align*}
            Similarly, as $\tau_N p$ is a multiple of $p$ we must borrow when computing $\tau_Np - \tau i$ so
            \[\getdigit[0]{\tau_Np-\tau i} = (p-1) - \getdigit[0]{\tau i}.\]
     
            As \(p \nmid i\) and \(p \nmid \tau_Dp-i\), we may apply \Cref{lemma: digit comparison for non-multiple of p} to see that \(\delta(i)\) and \(\delta(\tau_Dp -i)\) depend only on these two calculated digits.   Hence $\delta(\tau_Dp-i)=1$ if and only if $(p-1) - \getdigit[-1]{\tau i} > (p-1) - \getdigit[-1]{\tau i}$ if and only if $\delta(i)=0$.
            
            The third statement is similar.  Note that $\tau j + \tau (d-j) = (p+1)$.  If $\tau_D \nmid j$, then $\tau j$ and $\tau (d-j)$ are not integers and so consist of a single non-zero repeating digit after the radix point.  Thus adding digits and keeping track of carries we see that
            \[
            \getdigit[0]{\tau j} + \getdigit[0]{\tau(d-j)} = p \quad \text{and} \quad \getdigit[-1]{\tau j} + \getdigit[-1]{\tau(d-j)} = p.
            \]
            We conclude using \Cref{lemma: digit comparison for non-multiple of p}.  
    
            Finally, if $\tau_D \mid  i$ and $i>0$ then $\delta(i) =0$ since the $\tau i = \frac{\tau_N}{\tau_D} i$ is an integer and hence has $\getdigit[-1]{\tau i} = 0$ while $\getdigit[0]{\tau i } >0$.
        \end{proof}

        \begin{theorem} \label{thm: delta}
            Let $i$ and $n$ be positive integers, and $\tau_D$ be the denominator of $\frac{p+1}{d}$ when written in lowest terms.  Then we have:
            \begin{enumerate}[(i)]
                \item \label{thmparti} $\delta(p^n i) = \delta(i)$.
                \item \label{thmpartii} $\delta_0(i) = \delta_0(i + \tau_D p)$.
                \item  \label{thmpartiii} if $\tau_D \nmid i$ and $p \nmid i$ then $\delta_0(i) + \delta_0(\tau_Dp-i) = 1$.
                \item \label{thmpartiv} if $\tau_D \mid  i$ or $p \mid i$, then $\delta_0(i) = \delta_0(\tau_Dp-i) = 0$.
            \end{enumerate}
        \end{theorem}

        \begin{proof}
            (\ref{thmparti}) By \Cref{cor: period1}, the base-$p$ expansions of $\tau i$ and $\tau p^n i$ consist of a single repeating digit after the radix point, and that is the same digit $h$ for both.  Thus $\delta(i)=1$ if and only if $\getdigit[\ell]{\tau i} \leq h$ for all $\ell \geq 0$.   Similarly, $\delta(p^n i)=1$ if and only if $\getdigit[\ell]{\tau i p^n} = \getdigit[\ell-n]{\tau i}\leq h$ for $\ell \geq 0$.  These two conditions are easily seen to be equivalent as $\getdigit[\ell-n]{\tau i} = h$ for $\ell < n$. 

            (\ref{thmpartii})
            As $\tau i$ and $\tau(i+\tau_Dp) = \tau i + \tau_N p$ have the same digits to the right of the radix point, we see $\getdigit[-1]{\tau(i + \tau_D p)} = \getdigit[-1]{\tau i}$.  
            Similarly, we see that $\getdigit[0]{\tau(i + \tau_Dp)} = \getdigit[0]{\tau i}$ as adding $\tau_N p$ (a multiple of $p$) does not change the digit in the ones place in the base-$p$ expansion of $\tau i$.  We may then apply \Cref{lemma: digit comparison for non-multiple of p} to complete the proof, as $p \nmid i$ and $p \nmid \tau (i + \tau_D p) = \tau i + \tau_N p$.

            Recall that $\delta(i) = \delta_0(i)$ unless $i$ is a multiple of $p$, in which case $\delta_0(i)=0$. Then (\ref{thmpartiii}) and (\ref{thmpartiv}) are restatements of \Cref{prop: delta negation reduction}.
        \end{proof}

        In particular, $\delta_0$ is periodic with period $\tau_D p$ so we may apply the ideas of \Cref{subsec: basep and periodic}.

        \begin{proposition} \label{prop: number of subpalindromics in a tau_Dp interval}
            We have
            \(\avg{\delta_0} = \frac{1}{2}\paren{1 - \frac{1}{p}}\paren{1 - \frac{1}{\tau_D}}.\)
        \end{proposition}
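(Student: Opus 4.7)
The plan is to use the symmetry and periodicity properties established in \Cref{thm: delta}. Since $\delta_0$ is periodic with period $\tau_D p$ (by parts (i) and (ii) of that theorem, and in particular part (ii)), the average is
\[
\avg{\delta_0} = \frac{1}{\tau_D p} \sum_{i=1}^{\tau_D p} \delta_0(i).
\]
The strategy is to evaluate this sum by discarding the terms where $\delta_0(i) = 0$ forced by part (iv) of \Cref{thm: delta}, and pairing the remaining $i$ with $\tau_D p - i$ using part (iii).

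First, I would identify the support of $\delta_0$ on one period. By part (iv), $\delta_0(i)=0$ whenever $p \mid i$ or $\tau_D \mid i$. Since $\tau_D$ divides $d$ which divides $p-1$, we have $\gcd(\tau_D,p)=1$, so an inclusion-exclusion count shows that the number of $i \in \{1,\dots,\tau_D p\}$ with $p \nmid i$ and $\tau_D \nmid i$ is $\tau_D p - \tau_D - p + 1 = (\tau_D-1)(p-1)$.

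Next, I would set up the pairing $i \leftrightarrow \tau_D p - i$ on this support. The key points to verify are (a) this is a well-defined involution on the support (if $p\nmid i$ and $\tau_D \nmid i$, then neither divides $\tau_D p - i$), (b) the pairing has no fixed points, and (c) by part (iii) of \Cref{thm: delta}, each pair contributes exactly $1$ to the sum. For (b), a fixed point would satisfy $2i = \tau_D p$; if $\tau_D$ is odd this is impossible since $p$ is odd, and if $\tau_D$ is even then $i = (\tau_D/2)p$ is a multiple of $p$ and so not in the support.

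Combining these observations, the support breaks into $(\tau_D-1)(p-1)/2$ pairs each summing to $1$, so $\sum_{i=1}^{\tau_D p} \delta_0(i) = (\tau_D-1)(p-1)/2$, and dividing by $\tau_D p$ gives the claimed formula. The main (minor) obstacle is just the fixed-point verification for the involution, which requires treating the parity of $\tau_D$ and using that $p$ is odd; otherwise the argument is a direct application of the structural properties already assembled in \Cref{thm: delta}.
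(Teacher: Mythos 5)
Your proposal is correct and follows essentially the same route as the paper: reduce to a single period by periodicity, discard the terms killed by divisibility, and pair $i \leftrightarrow \tau_D p - i$ via part (iii) of \Cref{thm: delta}. You are a bit more careful than the paper's proof in explicitly verifying that the involution has no fixed points on the support (the paper implicitly assumes $i \neq \tau_D p - i$), but the structure and the count $(p-1)(\tau_D - 1)/2$ are identical.
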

        
        \begin{proof}
            The proposition is a consequence of \Cref{thm: delta}.  
            As $\delta_0$ is immediately periodic\textemdash meaning the delay $D_{\delta_0}$ is zero \textemdash with period $\tau_Dp$, it suffices to sum over the interval from $1$ to $\tau_Dp$.  
            If \(\tau_D \mid i\), then  \(\delta_0(i) = 0\).
            Thus it suffices to consider those $i$ in $[1,\tau_Dp]$ such that $\tau_D \nmid i$ and $p \nmid i$.  
            There are \((p-1)(\tau_D - 1)\) such \(i\), and exactly one of $i , \tau_Dp - i$ makes $\delta_0$ equal to $1$.
            Thus we have \(\sum_{i=1}^{\tau_D p} \delta_0(i) = \frac{(p-1)(\tau_D-1)}{2} \). Dividing by the period \(\tau_D p\) gives the proposition.
        \end{proof}
        
        We will return to these properties to compute more involved sums in \Cref{sec: sums delta}.
       
    \subsection{Simplifying Higher \texorpdfstring{$a$}{a}-Numbers} \label{sec: rewriting sum} 

        We now turn our attention to obtaining an alternate version of the formula for higher $a$-numbers given by \Cref{thm: bckmu} and expressing it using $\delta(i)$.

        \begin{definition}  \label{defn:  tDelta}
            For fixed $p, r$, and $d\mid p-1$ as usual, for a positive integer $n$ define 
            \[
            \tDelta_n \colonequals \Delta_n \cup  \left \{ (i,j) \in \Z^2_{>0} : i \leq t_n \, \text{and} \, p^n - i r \frac{p-1}{d} \leq j \leq p^n -1 \right \}.
            \]
        \end{definition}

        \begin{lemma}   \label{lemma: rewrite tDelta}
            Continuing the hypotheses and notation of \Cref{thm: bckmu}, $a^r(X_n) = \# \tDelta_n$.
        \end{lemma}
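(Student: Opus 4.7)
The plan is to read off the statement as a direct corollary of \Cref{thm: bckmu}, since that theorem already provides a formula
\[
a^r(X_n) = \frac{r(p-1)t_n(t_n+1)}{2d} + \#\Delta_n,
\]
and the definition of $\tDelta_n$ merely enlarges $\Delta_n$ by a set of lattice points $S_n$ lying under a line. So the task reduces to verifying that $\#S_n = \frac{r(p-1)t_n(t_n+1)}{2d}$ and that $S_n$ is disjoint from $\Delta_n$.

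First, I would observe that the condition $i > t_n$ in \Cref{defn: tn} and the condition $i \leq t_n$ defining $S_n$ are incompatible, so $\Delta_n \cap S_n = \emptyset$ and consequently $\#\tDelta_n = \#\Delta_n + \#S_n$. Next, for each integer $i$ with $1 \leq i \leq t_n$, I would count the admissible $j$: since $d \mid p-1$, the quantity $ir\frac{p-1}{d}$ is a positive integer, so the inequality $p^n - ir\frac{p-1}{d} \leq j \leq p^n - 1$ defines an interval of consecutive integers of cardinality exactly $ir\frac{p-1}{d}$, provided the lower endpoint is positive.

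The one small check needed is that $p^n - ir\frac{p-1}{d} \geq 1$ for all $i$ with $1 \leq i \leq t_n$. This follows from the bound $t_n \leq \gamma^{-1} p^n = \frac{dp^n}{r(p-1)+(p+1)}$ (recorded just after \Cref{defn: tau and gamma}): multiplying both sides by $\frac{r(p-1)}{d}$ gives
\[
t_n \cdot \frac{r(p-1)}{d} \leq \frac{r(p-1)p^n}{r(p-1)+(p+1)} < p^n,
\]
and the left side is an integer, so it is at most $p^n-1$, confirming positivity of the lower endpoint for every $i \leq t_n$.

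Summing the resulting arithmetic progression yields
\[
\#S_n = \sum_{i=1}^{t_n} \frac{ir(p-1)}{d} = \frac{r(p-1)}{d} \cdot \frac{t_n(t_n+1)}{2} = \frac{r(p-1)t_n(t_n+1)}{2d},
\]
and combining this with \Cref{thm: bckmu} gives $\#\tDelta_n = \#\Delta_n + \#S_n = a^r(X_n)$, as claimed. There is no substantive obstacle: the only nontrivial point is the positivity check above, which is an easy consequence of the definition of $t_n$ in terms of $\gamma$.
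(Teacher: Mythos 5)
Your proof is correct and follows essentially the same approach as the paper: decompose $\tDelta_n$ into $\Delta_n$ and the added triangular region, count the latter column by column as $\sum_{i=1}^{t_n} ir\frac{p-1}{d} = \frac{r(p-1)t_n(t_n+1)}{2d}$, and invoke \Cref{thm: bckmu}. The paper's version is terser and omits the positivity check $p^n - ir\frac{p-1}{d} \geq 1$ that you supply via $t_n \leq \gamma^{-1}p^n$; that check is worth including and you handle it correctly.
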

        
        \begin{proof}
            For $i \leq t_n$, the number of elements of $\tDelta_n$ with first coordinate $i$ is $i r \frac{p-1}{d}$.  The claim is then a restatement of \Cref{thm: bckmu} as
            \[
            \sum_{i=1}^{t_n} i r \frac{p-1}{d} = \frac{(p-1) r t_n (t_n+1)}{2d}. \qedhere
            \]
        \end{proof}

 \begin{proposition} \label{prop: new sum breakdown}
            With notation as before,
            \begin{align*}
                a^r(X_n) 
                &=
                \sum_{i=1}^{\floor{\gamma^{-1} p^n}}  (\gamma - \tau) i  + \sum_{i=\floor{\gamma^{-1} p^n} +1 }^{\floor{\tau^{-1} p^n}} \left( p^n - \floor{\tau i } \right) -   \sum_{i=\floor{\gamma^{-1} p^n} +1 }^{\lfloor  \tau^{-1} p^n \rfloor} \delta(i)\\
                &=  
                \left(  \sum_{i=1 }^{\floor{\tau^{-1} p^n}} 
                    (p^n - \floor{\tau i}) 
                    -
                    \sum_{i=1}^{\floor{\gamma^{-1} p^n}}  (p^n - \floor{\gamma i}) \right) - \left(\sum_{i=1}^{\floor{\tau^{-1} p^n}} \delta(i) - \sum_{i=1}^{\floor{\gamma^{-1} p^n}} \delta(i) \right).
            \end{align*}
        \end{proposition}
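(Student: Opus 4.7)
The plan is to expand $a^r(X_n) = \#\tDelta_n$ (using \Cref{lemma: rewrite tDelta}) directly from the definition of $\tDelta_n$, by partitioning it into the ``lower'' piece $\{(i,j)\in\tDelta_n : i \leq t_n\}$ and the ``upper'' piece $\Delta_n = \{(i,j)\in\tDelta_n : i > t_n\}$, counting each separately, and then algebraically recombining to obtain the two stated forms.

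For the lower piece, fix $i$ with $1\leq i \leq t_n$: the number of admissible $j$ is exactly $i(\gamma-\tau)$, using that $\gamma-\tau = r(p-1)/d$ is a positive integer (since $d\mid p-1$). Summing over $i$ from $1$ to $t_n = \floor{\gamma^{-1}p^n}$ gives the first term $\sum_{i=1}^{\floor{\gamma^{-1}p^n}}(\gamma-\tau)i$ of the first displayed equality.

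For the upper piece, I would rewrite $\mu(i) = \floor{\tau i} + \delta(i)$ (immediate from \Cref{defn: mu} and \Cref{defn: deltas}), so for each $i>t_n$ the number of admissible $j$ equals $\max\{0,\, p^n - \floor{\tau i} - \delta(i)\}$. The key sub-step is to identify $\floor{\tau^{-1}p^n}$ as the correct upper cutoff. First observe that $\tau^{-1}p^n = p^n\tau_D/\tau_N$ is never an integer: since $\gcd(\tau_N,p)=1$, $\tau_N \mid p^n$ would force $\tau_N=1$, which is impossible for odd $p$ because $\tau_N = (p+1)/\gcd(d,2) \geq (p+1)/2 > 1$. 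Consequently, for $i \leq \floor{\tau^{-1}p^n}$ we have $\tau i < p^n$ and hence $p^n - \floor{\tau i} - \delta(i) \geq 0$, so the $\max$ with $0$ is redundant; while for $i > \floor{\tau^{-1}p^n}$ we have $\tau i > p^n$ so $\floor{\tau i} \geq p^n$ and no $(i,j)\in\Delta_n$ arises. Substituting this and splitting $p^n - \floor{\tau i} - \delta(i)$ into two sums yields the first displayed equality.

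To deduce the second equality I would use that $\gamma - \tau \in \Z$ to replace $(\gamma-\tau)i$ by $\floor{\gamma i} - \floor{\tau i}$, giving
\[
\sum_{i=1}^{\floor{\gamma^{-1}p^n}} (\gamma-\tau)i \;=\; \sum_{i=1}^{\floor{\gamma^{-1}p^n}} \bigl((p^n-\floor{\tau i}) - (p^n-\floor{\gamma i})\bigr),
\]
and then apply the identity $\sum_{i=t_n+1}^{\floor{\tau^{-1}p^n}} f(i) = \sum_{i=1}^{\floor{\tau^{-1}p^n}} f(i) - \sum_{i=1}^{\floor{\gamma^{-1}p^n}} f(i)$ (valid since $\gamma > \tau$ forces $\floor{\gamma^{-1}p^n}\leq\floor{\tau^{-1}p^n}$) separately to $f(i) = p^n - \floor{\tau i}$ and to $f(i) = \delta(i)$; combining produces the second displayed equality after a short rearrangement. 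The main obstacle is the careful boundary analysis of the upper cutoff: one must confirm that no lattice point is lost or double-counted in the potential degenerate case when $\mu(i) = p^n$ exactly at $i = \floor{\tau^{-1}p^n}$, which is precisely where the arithmetic observation $\tau^{-1}p^n\notin\Z$ (and the resulting strict inequality $\tau\floor{\tau^{-1}p^n}<p^n$) is needed.
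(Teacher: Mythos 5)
Your proof is correct and follows essentially the same strategy as the paper's: decompose $\#\tDelta_n$ by first coordinate, count $(\gamma-\tau)i$ points for $i\leq t_n$ and $p^n-\mu(i) = p^n - \floor{\tau i} - \delta(i)$ points for $t_n < i \leq \floor{\tau^{-1}p^n}$, then use $\gamma-\tau\in\Z_{>0}$ to rewrite. Your explicit verification that $\tau^{-1}p^n\notin\Z$ and the resulting non-negativity of $p^n - \floor{\tau i} - \delta(i)$ is a boundary detail the paper leaves implicit, so you've actually given a slightly more careful rendering of the same argument.
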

        
        \begin{proof}
            Recall that by definition $t_n = \lfloor \gamma^{-1} p^n \rfloor$, and notice that the largest potential first coordinate of a point in $\Delta_n$ is $\floor{\tau^{-1} p^n}$. Recalling \Cref{defn: tDelta,defn: tn}, we see that for $1 \leq i \leq t_n$, the number of elements of $\tDelta_n$ with first coordinate $i$ is
            $i r \frac{p-1}{d} = (\gamma-\tau) i$.  Similarly, for $t_n < i$, the number of elements of $\Delta_n$ with first coordinate $i$ is $p^n - \mu(i)$.  As $\mu(i) = \floor{\tau i} + \delta(i)$, we conclude that
            \[
            \# \tDelta_n =  \sum_{i=1}^{\floor{\gamma^{-1} p^n}}  (\gamma - \tau) i  + \sum_{i=\floor{\gamma^{-1} p^n} +1 }^{\floor{\tau^{-1} p^n}} \left( p^n - \floor{\tau i} \right)  -   \sum_{i=\floor{\gamma^{-1} p^n} +1 }^{\lfloor  \tau^{-1} p^n \rfloor} \delta(i).
            \]
            The first equality follows using \Cref{lemma: rewrite tDelta}.  The second equality is elementary, noticing that 
            $\gamma - \tau \in \Z_{> 0}$ so $(p^n-\floor{\tau i}) - (p^n - \floor{\gamma i}) = (\gamma-\tau)i$.
        \end{proof}

\subsection{Connection with Ehrhart quasi-polynomials} \label{sec:erhart}
We end by explaining a slightly different perspective, using the theory of Ehrhart quasi-polynomials for rational polytopes, which is not needed for our approach.
       The following triangle is closely related to the region $\Delta_n$:
       
        \begin{definition}
            Let $\cP_n \colonequals p^n \cP_0$ be the triangle with vertices
            $(0,p^n)$, $(\tau^{-1} p^n  , p^n)$, and $(\gamma^{-1} p^n ,  \tau \gamma^{-1} p^n)$.  In particular, $\cP_0$ is the triangle with vertices $(0,1)$, $(\tau^{-1},1)$, and $(\gamma^{-1},\tau \gamma^{-1})$. 
        \end{definition}

        It is an easy exercise to check that $\cP_n$ can equivalently be described as the triangle bounded by the lines $y = p^n$, $y = \tau x = \frac{p+1}{d} x$, and $y = p^n - r \frac{p-1}{d}  x = p^n  - (\gamma-\tau)  x
        $.  An example is shown in \Cref{fig: diagram}: the points on or below $y = \tau x$ are explained by those $i$ where $\delta(i) =0$ as shown in \Cref{table: values delta}.  

        \begin{remark} \label{remark:boundary issues}
        Letting $L(\cP_n)$ denote the number of lattice points in $\cP_n$, we will verify that
            \begin{equation} \label{eq:alternateformula}
            a^r(X_n) = \# \tDelta_n = L(\cP_n) - \lfloor \tau^{-1} p^n  \rfloor  -1 + \sum_{i=t_n+1}^{\lfloor  \tau^{-1} p^n \rfloor} (1- \tdelta(i)) .
            \end{equation}
        The set of lattice points in $\cP_n$ (including points on the boundary) is almost $\tDelta_n$, except:
            \begin{itemize}
                \item  the $\lfloor  \tau^{-1} p^n \rfloor +1$ points $(0,p^n), (1,p^n), \ldots, (\lfloor  \tau^{-1} p^n \rfloor ,p^n)$ are lattice points of $\cP_n$ but not included in $\tDelta_n$,
                
                \item  for $i> t_n$, the lower bound on the possible second coordinates are $\frac{p+1}{d} i$ versus $\mu_i$.  The point $(i,\mu(i)) \in \tDelta_n$ is not in $\cP_n$ if and only if $\mu(i) < \frac{p+1}{d}i $.  Note this happens if and only if $\delta(i) = 0$ and $\frac{p+1}{d} i$ is not an integer (i.e. $\tau_D \nmid i$).  We may use $\tdelta$ to encapsulate this as the condition $\tdelta(i)=0$,
            
                \item  for points $(i,j) \in \Delta_n$ with $i> t_n$ and $j > \mu(i)$, it is automatic that $j \geq \frac{p+1}{d} i$.  
            \end{itemize}
            Thus $\cP_n$ contains $\lfloor \tau^{-1}  p^n  \rfloor +1$ lattice points not appearing in $\tDelta_n$ and misses 
            \( \displaystyle \sum_{i=t_n+1}^{\lfloor \tau^{-1} p^n  \rfloor}  (1-\tdelta(i)) \)
            elements of $\Delta_n$. 

      Now the triangle $\cP_n$ is the dilation of $\cP_0$ by $p^n$, and the theory of Ehrhart quasi-polynomials describes the number of lattice points in the dilation. In particular,
the main result of \cite{mcmullen} implies that  the Ehrhart quasi-polynomial of the triangle 
                $\cP_0$ is
                \[
                        \frac{1}{2}(\tau^{-1}-\gamma^{-1}) t^2
                        + \frac{1}{2}\left(\gamma^{-1}+\tau^{-1}+\frac{\gcd(d,2)}{d}(\tau^{-1}-\gamma^{-1})\right) t
                        + c(t)
                \]
                for some periodic function $c(t)$.  Furthermore, we know that the minimal period of $c(t)$ is a divisor of the least integer $D$ such that 
             $D\tau^{-1}$,  $D\gamma^{-1}$,
            and $D\tau\gamma^{-1}$ are integral.  The key observation is that the affine spans of each side of the triangle contain lattice points, so that the periods of the linear and quadratic terms are one.      
        As discussed in \cite{bsw08}, it is a subtle question to determine the minimal period.
 Taking $t = p^{n}$ makes the main term in Theorem~\ref{thm: main theorem} transparent. 
 
        Equation~\eqref{eq:alternateformula} shows that (just as in \Cref{prop: new sum breakdown}) the heart of the proof must be to understand sums involving $\delta(i)$.  This is the subject of \Cref{sec: sums delta}.  We choose to directly study the sums involving floor functions appearing in \Cref{prop: new sum breakdown}, instead of this approach using the Ehrhart quasi-polynomial, as the easy arguments we give in \Cref{sec: sums floor} introduce the techniques we use for studying the sums involving $\delta(i)$ in \Cref{sec: sums delta}.
        \end{remark}

\section{Computing Higher \texorpdfstring{$a$}{a}-Numbers} \label{sec:computinghigheranumbers}
    In this section we analyze the sums appearing in \Cref{prop: new sum breakdown}.  In particular, for a positive rational number $x$ we study sums of the form 
    \[
    \sum_{i=1}^{\floor{ x^{-1} p^n}} (p^n - \floor{xi}) \quad \text{and} \quad  \sum_{i=1}^{\floor{x^{-1} p^n}} \delta(i) .
    \]

    \subsection{Sums involving floor functions} \label{sec: sums floor}
        For a rational number $x$, recall that $\fracpart{x} = x - \floor{x}$ is the fractional part of $x$ and that $L_{x}$, $D_x$, and $\avg{x}$ are defined in \Cref{defn: Lx and Dx} in terms of the base-$p$ decimal expansion of $x$.
        
        \begin{lemma}   \label{lemma: fractionalparts}
            For any $x \in \Qpositive$, \(\fracpart{x p^n}\) is periodic in \(n\) for \(n \geq D_x\) with period $L_x$ and average value $\avg{\fracpart{xp^{n}}} = \avg{x}/(p-1)$.
        \end{lemma}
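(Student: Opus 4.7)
The plan is to pass directly to the base-$p$ expansion of $x$ and exploit the periodicity of its digits. First, writing $x = \sum_{k \in \Z} \getdigit[k]{x}\, p^k$ and reindexing $\ell = k + n$, multiplication by $p^n$ shifts indices to give
\[
xp^n = \sum_{\ell \in \Z} \getdigit[\ell - n]{x}\, p^\ell,
\]
so isolating the $\ell < 0$ terms yields the key identity
\[
\fracpart{xp^n} = \sum_{m \geq 1} \getdigit[-m-n]{x}\, p^{-m}.
\]
In other words, $\fracpart{xp^n}$ is precisely the ``tail'' of the base-$p$ expansion of $x$ starting at position $-n-1$.

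The periodicity claim is then immediate from \Cref{lemma: Lx}: the sequence $(\getdigit[-k]{x})_{k \geq 1}$ is periodic with period $L_x$ for $k \geq D_x + 1$. For $n \geq D_x$ and $m \geq 1$ one has $m + n \geq D_x + 1$, so replacing $n$ by $n + L_x$ fixes every term of the sum above; hence $\fracpart{xp^{n+L_x}} = \fracpart{xp^n}$ for all $n \geq D_x$.

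For the average value, I would sum the key identity over one period, interchange the two summations (justified by non-negativity and absolute convergence of $\sum_m p^{-m}$), and recognize the inner sum as one full period of the digit sequence, which contributes $L_x \avg{x}$ by the definition of $\avg{x}$:
\[
\sum_{n=D_x}^{D_x+L_x-1} \fracpart{xp^n} \;=\; \sum_{m \geq 1} p^{-m} \sum_{n=D_x}^{D_x+L_x-1} \getdigit[-m-n]{x} \;=\; L_x \avg{x} \sum_{m \geq 1} p^{-m} \;=\; \frac{L_x \avg{x}}{p-1}.
\]
Dividing by $L_x$ gives the claimed average $\avg{x}/(p-1)$; the shift of indexing from $n$ to $i - 1$ in the statement of the lemma has no effect on the average over a period.

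Nothing here is genuinely difficult: the only conceptual step is the opening identity expressing $\fracpart{xp^n}$ as a shifted tail of the digit expansion, after which periodicity and the average are formal consequences of the description of $L_x$, $D_x$, and $\avg{x}$ given in \Cref{defn: Lx and Dx} and \Cref{lemma: Lx}. The one place to be mildly careful is in bookkeeping the index shift $m + n \geq D_x + 1$, which is what forces the lower bound $n \geq D_x$ on the periodicity statement.
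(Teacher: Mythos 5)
Your proof is correct and is essentially the same as the paper's: both express $\fracpart{xp^n}$ as the tail $\sum_{m \geq 1}\getdigit[-m-n]{x}\,p^{-m}$ of the base-$p$ expansion, deduce periodicity in $n$ from the eventual periodicity of the digit sequence, and compute the average by interchanging the two sums and summing the geometric series $\sum_{m\geq 1}p^{-m} = 1/(p-1)$. Your write-up is a bit more explicit about the index bookkeeping ($m+n \geq D_x+1$) and the interchange of summation, but there is no difference in method.
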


        \begin{proof}            
            Let $x = \sum x_i p^i$ be the base-$p$ expansion of $x$. Then
            \begin{equation*}
              \fracpart{x p^n } = \sum_{i \leq -1} x_{i -n} p^{i}.
            \end{equation*}
            We know $x_{i-n} = x_{i-n-L_x}$ for $-(i-n) \geq D_x$ + 1 as the base-$p$ expansion of $x$ is eventually periodic.  Thus if $n \geq D_x$, summing over \(i \leq -1\), we see that $\fracpart{xp^n} = \fracpart{xp^{n + L_x}}$. 
            Furthermore,
            \[
                \avg{\fracpart{xp^n}}
                = \frac{1}{L_x} 
                \sum_{n = D_x}^{D_x + L_x - 1} \fracpart{xp^n}
                = \frac{1}{L_x} 
                \sum_{i = D_x + 1}^{D_x + L_x} 
                x_{-i} (p^{-1} + p^{-2} + \ldots) 
                = \frac{\avg{x}}{p-1}. \qedhere
            \]
        \end{proof}
    
        \begin{lemma}   \label{lemma: floors}
            Let $x$ be a positive rational number. Then 
            \begin{enumerate}[(i)]
                \item \(\floor{x p^n} \equiv \floor{x p^{n+L_x}} \pmod{x_N}\) for \(n \geq D_x\).
                \item \(\floor{x p^n} \equiv \floor{x p^{n+L_x}} \pmod{p}\) for \(n \geq D_x + 1\).
            \end{enumerate}
        \end{lemma}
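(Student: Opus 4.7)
The plan is to derive both congruences from the key identity
\[
\floor{xp^{n+L_x}} - \floor{xp^n} \;=\; xp^n(p^{L_x} - 1),
\]
valid whenever $\fracpart{xp^n} = \fracpart{xp^{n+L_x}}$, and then unpack the right-hand side using the factorization $x = p^{v_p(x)} x_N / x_D$ together with \Cref{lemma: Lx}.

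First I would invoke \Cref{lemma: fractionalparts} to see that for $n \geq D_x$ the fractional parts $\fracpart{xp^n}$ and $\fracpart{xp^{n+L_x}}$ agree. Subtracting $xp^{n+L_x} - \fracpart{xp^{n+L_x}} = \floor{xp^{n+L_x}}$ from $xp^n - \fracpart{xp^n} = \floor{xp^n}$ yields the displayed identity. Substituting $x = p^{v_p(x)}x_N/x_D$ gives
\[
\floor{xp^{n+L_x}} - \floor{xp^n} \;=\; p^{\,n + v_p(x)}\, x_N \cdot \frac{p^{L_x} - 1}{x_D}.
\]
By \Cref{lemma: Lx}, $L_x$ is the order of $p$ modulo $x_D$, so $(p^{L_x}-1)/x_D$ is an integer. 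This already shows the right-hand side is an integer multiple of $x_N$, proving (i).

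For (ii), I need a further factor of $p$. Since $D_x = -v_p(x)$ by \Cref{lemma: Lx}, the condition $n \geq D_x + 1$ is precisely $n + v_p(x) \geq 1$, which makes $p^{n+v_p(x)}$ divisible by $p$. Hence the entire right-hand side is divisible by $p$, giving (ii).

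There is no real obstacle here: once the periodicity of the fractional parts is in hand and $L_x$, $D_x$ are identified in terms of $x_N, x_D, v_p(x)$, the proof is a one-line manipulation. The only mild care needed is to verify that $n + v_p(x) \geq 0$ already for $n \geq D_x$, which is needed so that $p^{n+v_p(x)} x_N (p^{L_x}-1)/x_D$ is literally an integer (and not merely a rational multiple of $x_N$) before passing to congruence statements.
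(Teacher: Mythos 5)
Your proof is correct and follows essentially the same approach as the paper: invoke \Cref{lemma: fractionalparts} to equate fractional parts for $n \geq D_x$, deduce that $\floor{xp^{n+L_x}} - \floor{xp^n} = xp^n(p^{L_x}-1)$, and analyze the $p$-adic and $x_N$-divisibility after writing $x = p^{v_p(x)} x_N/x_D$. The only minor difference is in part (i): you directly use that $L_x$ is the order of $p$ modulo $x_D$ (so $x_D \mid p^{L_x}-1$), whereas the paper instead notes the difference of floors is an integer and invokes $\gcd(x_N,x_D)=1$ to conclude it is a multiple of $x_N$; both are valid and equivalent.
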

     
        \begin{proof}
            By \Cref{lemma: fractionalparts}, \(\fracpart{x p^{e}} = \fracpart{x p^{e + L_x}}\) for \(e \geq D_x\), and hence
                   \[\floor{x p^{e + L_x}} - \floor{x p^{e}} = x p^{e + L_x} - x p^{e}.\]
            Thus we see that
             \[\floor{x p^{n + L_x}} - \floor{x p^{n}} = p^{n-D_x} x (p^{D_x + L_x} - p^{D_x})
            \]
            which implies that $\floor{x p^{n + L_x}} \equiv \floor{x p^{n}} \pmod{p}$ for \(n \geq D_x + 1\).  Furthermore, for \(n \geq D_x\), the left side shows this quantity is an integer, while the right side shows that it is a multiple of $x_N$ since the denominator $x_D$ is coprime with $x_N$.  Thus $\floor{x p^{n + L_x}} \equiv \floor{x p^{n}} \pmod{x_N}$ for \(n \geq D_x\) as well.
        \end{proof}
    
        \begin{proposition} \label{prop: single floor sum}
            Let $x$ be a positive rational number with $v_p(x) \geq 0$.  Then
            \[
                \sum_{i=1}^{\floor{x^{-1} p^n}} (p^n - \floor{x i}) 
            =
                \frac{1}{2}x^{-1} p^{2n} + \frac{1}{2} \left( x^{-1}\left(1 - \frac{1}{x_D}\right) - 1 \right) p^n + A_{x^{-1}} (n)
            \]
            where the function
            \begin{align}
                A_{x^{-1}}(n)
                \colonequals
                &\frac{1}{2} \left( -\left(1 - \frac{1}{x_D}\right) + x (1 - \fracpart{x^{-1} p^n}) \right) \fracpart{x^{-1} p^n} + \sum_{k=1}^{\floor{x^{-1} p^n} \bmod x_D} \left( \fracpart{xk} - \frac{1}{2} \left(1 - \frac{1}{x_D}\right) \right)
            \end{align}
            is periodic for \(n \geq D_{x^{-1}}\) with period \(L_{x^{-1}}\).
        \end{proposition}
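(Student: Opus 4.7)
The plan is to rewrite the summand using $p^n-\floor{xi}=p^n-xi+\fracpart{xi}$ and then handle the resulting three pieces separately. Writing $N\colonequals\floor{x^{-1}p^n}$ and $f\colonequals\fracpart{x^{-1}p^n}$, so that $N=x^{-1}p^n-f$, the first two pieces contribute
\[
Np^n-x\tfrac{N(N+1)}{2}
=\tfrac{1}{2}x^{-1}p^{2n}-\tfrac{1}{2}p^n+\tfrac{1}{2}xf(1-f),
\]
which I would verify by direct expansion. This accounts for the $\frac{1}{2}x^{-1}p^{2n}$ term and part of both the $p^n$ coefficient and the $A_{x^{-1}}(n)$ piece.

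Next I would attack $\sum_{i=1}^N\fracpart{xi}$ by showing that $i\mapsto\fracpart{xi}$ is periodic with period $x_D$ and computing its average. Since $v_p(x)\geq 0$, the product $xx_D=p^{v_p(x)}x_N$ is an integer, giving periodicity. Over one period the residues $xi\bmod 1$ run through $\{0,1/x_D,\dots,(x_D-1)/x_D\}$ exactly once, because $p^{v_p(x)}x_N$ is coprime to $x_D$; this gives $\avg{\fracpart{xi}}=\tfrac12(1-1/x_D)$. Applying \Cref{lemma: periodic sum} with this period yields
\[
\sum_{i=1}^N\fracpart{xi}=N\cdot\tfrac12\bigl(1-\tfrac{1}{x_D}\bigr)+\sum_{k=1}^{N\bmod x_D}\Bigl(\fracpart{xk}-\tfrac12\bigl(1-\tfrac{1}{x_D}\bigr)\Bigr).
\]
Substituting $N=x^{-1}p^n-f$ in the first term produces the remaining piece of the $p^n$ coefficient, $\tfrac{1}{2}x^{-1}(1-1/x_D)p^n$, together with a correction $-\tfrac12(1-1/x_D)f$ that combines with $\tfrac12 xf(1-f)$ from the previous paragraph to give exactly the first summand in the definition of $A_{x^{-1}}(n)$.

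Combining everything gives the claimed formula, with
\[
A_{x^{-1}}(n)=\tfrac12\bigl(-(1-1/x_D)+x(1-f)\bigr)f+\sum_{k=1}^{N\bmod x_D}\bigl(\fracpart{xk}-\tfrac12(1-1/x_D)\bigr).
\]
Finally, I would establish periodicity of $A_{x^{-1}}$ for $n\geq D_{x^{-1}}$. The $f$-dependence is controlled by \Cref{lemma: fractionalparts}, which gives periodicity of $\fracpart{x^{-1}p^n}$ with period $L_{x^{-1}}$ for $n\geq D_{x^{-1}}$. For the sum's upper limit, I note that $(x^{-1})_N=x_D$ since $\gcd(x_N,x_D)=1$, so \Cref{lemma: floors}(i) applied to $x^{-1}$ yields $\floor{x^{-1}p^n}\equiv\floor{x^{-1}p^{n+L_{x^{-1}}}}\pmod{x_D}$ for $n\geq D_{x^{-1}}$, making $N\bmod x_D$ periodic with the same period. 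The main subtlety, and the only place requiring care, is keeping track of the hypothesis $v_p(x)\geq 0$ to ensure both that $xx_D\in\Z$ (needed for the $i$-periodicity of $\fracpart{xi}$) and that $(x^{-1})_N=x_D$ (needed for the $n$-periodicity of $N\bmod x_D$); the rest is arithmetic bookkeeping.
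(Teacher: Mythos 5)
Your proposal is correct and follows essentially the same route as the paper: split $p^n-\floor{xi}=(p^n-xi)+\fracpart{xi}$, compute the polynomial piece in closed form, handle $\sum\fracpart{xi}$ via periodicity of $i\mapsto\fracpart{xi}$ with period $x_D$ and average $\tfrac12(1-1/x_D)$ using \Cref{lemma: periodic sum}, then establish $n$-periodicity of $A_{x^{-1}}$ via \Cref{lemma: fractionalparts} and \Cref{lemma: floors}. Your explicit observation that $(x^{-1})_N=x_D$ (needed to invoke \Cref{lemma: floors} for the $\floor{x^{-1}p^n}\bmod x_D$ term) is a helpful clarification left implicit in the paper.
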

    
        \begin{proof}
            We split the summand using $p^n - \floor{xi} = (p^n - xi) + \fracpart{xi}$.  Note that
            \[
            \sum_{i=1}^{\floor{x^{-1} p^n}} (p^n - xi) = \floor{x^{-1} p^n} p^n - x \cdot \frac{\floor{x^{-1} p^n} \paren{\floor{x^{-1} p^n}+1}}{2}.
            \]
            Rewriting using \(\floor{x^{-1} p^n} = x^{-1} p^n - \fracpart{x^{-1} p^n}\), we see that
                \begin{equation}    \label{single floor sum:eq1}
                    \sum_{i=1}^{\floor{x^{-1} p^n}} (p^n - xi) = 
                    \frac{x^{-1}}{2} p^{2n} - \frac{1}{2} p^n + \frac{x}{2}\paren{1 - \fracpart{x^{-1} p^n}} \fracpart{x^{-1} p^n}.
                \end{equation}
            It remains to consider the sum of the periodic %
            function $\fracpart{xi}$. Since $v_p(x) \geq 0$, the denominator of \(x\) (in lowest terms) is \(x_D\) so $\fracpart{xi}$ is immediately periodic with period $x_D$. The average value of the function is $m \colonequals \frac{1}{2}\paren{1 - \frac{1}{x_D}}$ as in a block of length $x_D$ the fractional part $\fracpart{xi}$ ranges through $0,1/x_D,2/x_D,\dots, (x_D-1)/x_D$ in some order.
            Applying \Cref{lemma: periodic sum}, we obtain
            \begin{align*}
                \sum_{i=1}^{\floor{x^{-1} p^n}} \fracpart{x i}
            &=
               m \floor{x^{-1} p^n} + \sum_{k=1}^{\floor{x^{-1} p^n} \bmod x_D} \left( \fracpart{xk} - m \right) 
            \end{align*}
            Expanding using \(\floor{x^{-1} p^n} = x^{-1} p^n - \fracpart{x^{-1} p^n}\) gives 
            \begin{equation}    \label{single floor sum:eq2}
                \sum_{i=1}^{\floor{x^{-1} p^n}} \fracpart{x i} =
                m  x^{-1} p^n - m  \fracpart{x^{-1} p^n}
              + \sum_{k=1}^{\floor{x^{-1} p^n} \bmod x_D} \paren{ \fracpart{xk} - m }
            \end{equation}
            The desired formula follows from combining equations \eqref{single floor sum:eq1} and \eqref{single floor sum:eq2}. By \Cref{lemma: fractionalparts}, $\fracpart{x^{-1}p^n}$ is periodic for \(n \geq D_{x^{-1}}\) with period $L_{x^{-1}}$. By \Cref{lemma: floors}, \(\paren{\floor{x^{-1} p^n} \bmod x_D}\) is periodic for \(n \geq D_{x^{-1}}\) with period $L_{x^{-1}}$. This gives the desired periodicity for $A_{x^{-1}}(n)$.
        \end{proof}

        \begin{corollary} \label{cor: difference of floor sums}
            With notation as in \Cref{defn: tau and gamma},
                \begin{align*}
                    \sum_{i=1 }^{\floor{\tau^{-1} p^n}} (p^n - \floor{\tau i}) 
                    & -
                    \sum_{i=1}^{\floor{\gamma^{-1} p^n}}  (p^n - \floor{\gamma i})
                \\
                    = &\frac{1}{2} \left( \tau^{-1} - \gamma^{-1} \right) p^{2n}
                    +
                    \frac{1}{2} \left( \tau^{-1} - \gamma^{-1} \right)
                    \left( 1 - \frac{1}{\tau_D} \right) p^n 
                    +
                    \left( A_{\tau^{-1}}(n) - A_{\gamma^{-1}}(n) \right)
                \end{align*}
        \end{corollary}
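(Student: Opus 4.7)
The plan is to deduce this corollary directly from \Cref{prop: single floor sum} applied separately to $x = \tau$ and $x = \gamma$. First I would verify that both values satisfy the hypothesis $v_p(x) \geq 0$: since $d \mid p-1$ implies $\gcd(d,p) = 1$, both $\tau = (p+1)/d$ and $\gamma = ((p-1)r + (p+1))/d$ have denominators coprime to $p$ and positive integer numerators, so in fact $v_p(\tau) = v_p(\gamma) \geq 0$.

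Next I would substitute each into the formula of \Cref{prop: single floor sum} and take the difference. The $p^{2n}$ coefficient becomes $\frac{1}{2}(\tau^{-1} - \gamma^{-1})$ immediately, the constant (periodic) part becomes $A_{\tau^{-1}}(n) - A_{\gamma^{-1}}(n)$, and the $-\frac{1}{2} p^n$ contributions cancel between the two formulas. What remains to match is the $p^n$ coefficient, which a priori is
\[
\frac{1}{2}\left( \tau^{-1}\left(1 - \tfrac{1}{\tau_D}\right) - \gamma^{-1}\left(1 - \tfrac{1}{\gamma_D}\right) \right),
\]
while the corollary asserts it equals $\frac{1}{2}(\tau^{-1} - \gamma^{-1})\bigl(1 - \tfrac{1}{\tau_D}\bigr)$.

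The main (and only real) obstacle is therefore to establish that $\tau_D = \gamma_D$. For this I would argue as follows: since $d \mid p-1$, $d$ divides $(p-1)r$, and hence $\gcd(d,(p-1)r + (p+1)) = \gcd(d, p+1) = \gcd(d,2)$ because $p+1 \equiv 2 \pmod{d}$. Thus the reduced form of $\gamma$ has denominator $d/\gcd(d,2)$, which coincides with $\tau_D = d/\gcd(d,2)$; since $d$ is coprime to $p$, extracting the $p$-adic valuation from the numerator leaves the denominator untouched, so $\gamma_D = \tau_D$. Substituting this identification into the $p^n$-coefficient factors out $(1 - 1/\tau_D)$ and yields exactly the stated expression, completing the proof.
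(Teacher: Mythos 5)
Your proposal is correct and takes exactly the approach the paper intends (the paper's proof is just ``Apply Proposition 3.3 and rewrite''). You correctly identify and verify the one nontrivial point the paper leaves implicit, namely that $\gamma_D = \tau_D = d/\gcd(d,2)$, which is what makes the $p^n$-coefficients factor cleanly.
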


        \begin{proof}
            Note $v_p(\tau) = 0$ and $v_p(\gamma) \geq 0$ because $d \mid p-1$ and therefore $p\nmid d$. We can then apply \Cref{prop: single floor sum} and rewrite
            \begin{align*}
                &\sum_{i=1 }^{\floor{\tau^{-1} p^n}} (p^n - \floor{\tau i}) 
                -
                \sum_{i=1}^{\floor{\gamma^{-1} p^n}}  (p^n - \floor{\gamma i}) \\
                &= \left[\frac{1}{2}\tau^{-1} p^{2n} + \frac{1}{2} \left( \tau^{-1}\left(1 - \frac{1}{\tau_D}\right) - 1 \right) p^n + A_{\tau^{-1}} (n)\right] \\
                &- \left[\frac{1}{2}\gamma^{-1} p^{2n} + \frac{1}{2} \left( \gamma^{-1}\left(1 - \frac{1}{\gamma_D}\right) - 1 \right) p^n + A_{\gamma^{-1}} (n)\right] \tag{\Cref{prop: single floor sum}} \\
                &=\frac{1}{2} \left( \tau^{-1} - \gamma^{-1} \right) p^{2n}
                +
                \frac{1}{2} \left( \tau^{-1} - \gamma^{-1} \right)
                \left( 1 - \frac{1}{\tau_D} \right) p^n 
                +
                \left( A_{\tau^{-1}}(n) - A_{\gamma^{-1}}(n) \right). \qedhere
            \end{align*}
        \end{proof}

        \begin{example} \label{ex: sum floors p5d4}
            For \(p = 5\), \(d = 4\), and \(r = 2\), 
            \begin{align*}
                 \sum_{i=1}^{\floor{\gamma^{-1} p^n}}  (\gamma - \tau) i + 
                 \sum_{i=\floor{\gamma^{-1} p^n} +1 }^{\floor{\tau^{-1} p^n}} 
                 (p^n - \floor{\tau i})
                =  \frac{4}{21} \cdot 5^{2n}
                +
                \frac{2}{21} \cdot 5^n 
                +
                c^{(1)}(n)
            \end{align*}
            where \(c^{(1)}(n)\) is given in \Cref{table: sum floors p5d4 quasiconstant}.

            \begin{table}[hb]   \label{table: sum floors p5d4 quasiconstant}
                \begin{tabular}{|l|p{1cm}|p{1cm}|p{1cm}|p{1cm}|p{1cm}|p{1cm}|}
                    \hline
                    \(n \pmod{6}\)     & \(0\) & \(1\) & \(2\) & \(3\) & \(4\) & \(5\) \\ \hline
                    \(c^{(1)}(n)\)
                    & \(-2/7\) & \(-5/21\) & \(-3/7\)
                    & \(-2/21\) & \(-2/7\) & \(1/3\)                \\ \hline
                \end{tabular}
                \caption{Value of periodic function \(c^{(1)}(n)\) for \(n \geq 0.\)}
            \end{table}
        \end{example}

        \subsection{Sums involving \texorpdfstring{$\delta$}{\textbackslash delta}}\label{sec: sums delta}
        We now turn to the sums of $\delta(i)$ appearing in \Cref{prop: new sum breakdown}. 
        We first reduce to considering the case where $i$ is prime to $p$.

        \begin{lemma} \label{lemma: delta sum by exponent}
            Let \(x \in \Qpositive\) with \(x \geq 1\). Then
            \[ \sum_{i=1}^{\floor{x^{-1} p^n}} \delta(i) 
            = 
            \sum_{e=0}^n 
            \sum_{\substack{i=1 \\ p \: \nmid \: i}}^{\floor{x^{-1} p^e}} 
            \delta(i) 
            =
            \sum_{e=0}^n \sum_{i=1}^{\floor{x^{-1} p^e}} \delta_0(i).\]
        \end{lemma}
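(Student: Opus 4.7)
The plan is to partition the sum over $i$ according to $p$-adic valuation. Every positive integer $i$ can be written uniquely as $i = p^e j$ with $e = v_p(i) \geq 0$ and $p \nmid j$. By \Cref{prop: delta multiplicative reduction}, $\delta(p^e j) = \delta(j)$, so the summand only depends on $j$. It therefore suffices to count, for each fixed $e \geq 0$, the positive integers $j$ coprime to $p$ such that $p^e j \leq \lfloor x^{-1} p^n \rfloor$.

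First, I would check that the constraint $p^e j \leq \lfloor x^{-1} p^n \rfloor$ is equivalent to $j \leq \lfloor x^{-1} p^{n-e} \rfloor$. Since $p^e j$ is an integer, $p^e j \leq \lfloor x^{-1} p^n \rfloor$ is equivalent to $p^e j \leq x^{-1} p^n$, i.e., $j \leq x^{-1} p^{n-e}$, which (as $j \in \Z$) is the same as $j \leq \lfloor x^{-1} p^{n-e} \rfloor$. Next I would use the hypothesis $x \geq 1$ to restrict the range of $e$: if $e > n$, then $x^{-1} p^{n-e} \leq x^{-1} p^{-1} < 1$, so no positive integer $j$ satisfies the constraint. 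Hence the sum over $e$ is supported on $0 \leq e \leq n$.

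Combining these observations gives
\[
\sum_{i=1}^{\floor{x^{-1} p^n}} \delta(i)
=
\sum_{e=0}^{n}
\sum_{\substack{j=1 \\ p \nmid j}}^{\floor{x^{-1} p^{n-e}}} \delta(p^e j)
=
\sum_{e=0}^{n}
\sum_{\substack{j=1 \\ p \nmid j}}^{\floor{x^{-1} p^{n-e}}} \delta(j),
\]
and reindexing by $e \mapsto n - e$ yields the first claimed equality. The second equality is then immediate from \Cref{defn: deltas}: since $\delta_0(i) = 0$ when $p \mid i$ and $\delta_0(i) = \delta(i)$ otherwise, restricting a sum of $\delta$ to indices coprime to $p$ is the same as summing $\delta_0$ over all indices.

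There is no substantial obstacle; the only point requiring care is the bookkeeping in step 3, namely that $x \geq 1$ prevents contributions from $e > n$ and that the floor on the right-hand side can be passed through because $p^e j$ is integral.
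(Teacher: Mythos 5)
Your proof is correct and matches the paper's approach: both rely on Proposition~\ref{prop: delta multiplicative reduction} ($\delta(p^e j) = \delta(j)$) together with the definition of $\delta_0$. The paper states this as immediate, and your write-up simply spells out the bookkeeping (unique factorization $i = p^e j$ with $p \nmid j$, passing the floor through, and using $x \geq 1$ to cut off $e > n$) in more detail.
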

    
        \begin{proof}
            This is an immediate consequence of \Cref{thm: delta}(\ref{thmparti}) and rewriting using \Cref{defn: deltas}.
        \end{proof}

        As $\delta_0(i)$ is periodic, the inner sum is easy to evaluate.

        \begin{lemma} \label{lemma: delta sum for exponent}
           Let \(x \in \Qpositive\) and $e$ be a non-negative integer. Then
            \begin{align*}
                \sum_{i=1}^{\floor{x^{-1} p^e}} \delta_0(i)
                =
                &\frac{1}{2} 
                \left( 1 - \frac{1}{p} \right)
                \left( 1 - \frac{1}{\tau_D} \right)
                \left( x^{-1} p^e - \fracpart{x^{-1} p^e} \right) 
                + F_{x^{-1}}(e)
            \end{align*}
            where
            \begin{align*}
                F_{x^{-1}}(e) \colonequals 
                \sum_{k = 1}^{\floor{x^{-1} p^e} \bmod{\tau_Dp}} 
                \left(
                \delta_0(k)
                -
                \frac{1}{2} 
                \left( 1 - \frac{1}{p} \right)
                \left( 1 - \frac{1}{\tau_D} \right)
                \right).
            \end{align*}
        \end{lemma}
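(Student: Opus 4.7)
The plan is that this lemma is essentially a direct application of the periodic-sum machinery already in place. The key observation is that by \Cref{thm: delta}(ii), $\delta_0$ is periodic with period $\tau_D p$ (starting from $i=1$, with no delay), and by \Cref{prop: number of subpalindromics in a tau_Dp interval} its average value over one period is
\[
\avg{\delta_0} = \frac{1}{2}\paren{1 - \frac{1}{p}}\paren{1 - \frac{1}{\tau_D}}.
\]
Once these are in hand, the statement is a mechanical rewriting.

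The steps are as follows. First, I would apply \Cref{lemma: periodic sum} to the sequence $\delta_0$, with $N = \floor{x^{-1} p^e}$ and $L = \tau_D p$, to obtain
\[
\sum_{i=1}^{\floor{x^{-1} p^e}} \delta_0(i)
= \floor{x^{-1} p^e} \avg{\delta_0}
+ \sum_{k=1}^{\floor{x^{-1} p^e} \bmod \tau_D p} \paren{\delta_0(k) - \avg{\delta_0}}.
\]
Second, I would rewrite $\floor{x^{-1} p^e} = x^{-1} p^e - \fracpart{x^{-1} p^e}$ in the first term. Third, I would substitute the explicit formula for $\avg{\delta_0}$ from \Cref{prop: number of subpalindromics in a tau_Dp interval}; the first term becomes exactly
\[
\tfrac{1}{2}\paren{1 - \tfrac{1}{p}}\paren{1 - \tfrac{1}{\tau_D}}\paren{x^{-1} p^e - \fracpart{x^{-1} p^e}},
\]
while the correction sum is precisely the function $F_{x^{-1}}(e)$ as defined in the statement.

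There is no genuine obstacle here: the hard work has already been done in establishing the periodicity of $\delta_0$ and computing its average. The only thing to be careful about is that \Cref{lemma: periodic sum} applies with zero delay (i.e.\ $\delta_0$ is periodic starting from $i=1$), which is exactly the content of \Cref{thm: delta}(ii) together with \Cref{thm: delta}(iv) handling the boundary case $i = \tau_D p$. In the write-up I would simply present the three substitutions in a short display and point to the preceding lemmas.
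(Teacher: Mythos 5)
Your proposal is correct and matches the paper's proof exactly: both apply \Cref{lemma: periodic sum} to $\delta_0$ with period $\tau_D p$ and average value $\frac{1}{2}\left(1-\frac{1}{p}\right)\left(1-\frac{1}{\tau_D}\right)$ from \Cref{prop: number of subpalindromics in a tau_Dp interval}, then rewrite $\floor{x^{-1}p^e} = x^{-1}p^e - \fracpart{x^{-1}p^e}$. The paper cites \Cref{prop: delta additive reduction} for periodicity where you cite \Cref{thm: delta}(ii), but these are the same fact.
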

    
        \begin{proof}
        \Cref{thm: delta}(\ref{thmpartii}) shows that \(\delta_0\) is an immediately periodic function with period \(\tau_Dp\), so we may apply \Cref{lemma: periodic sum}. \Cref{prop: number of subpalindromics in a tau_Dp interval} shows that the average value of $\delta_0$ over a block of length
            $\tau_Dp$ is \(\frac{1}{2} \left( 1 - \frac{1}{p} \right) \left( 1 - \frac{1}{\tau_D} \right)\).
        \end{proof}

        \begin{corollary}   \label{cor: delta sum over exponents}
            Let \(x \in \Qpositive\) and $n$ be a positive integer with \(x \geq 1\). Then we have
            \[
                \sum_{i=1}^{\floor{x^{-1} p^n}} \delta(i) = 
                \frac{1}{2}
                \left( 1 - \frac{1}{\tau_D} \right)
                \left( p^n - \frac{1}{p} \right) x^{-1}
                - \frac{1}{2} \left( 1 - \frac{1}{p} \right) \left( 1 - \frac{1}{\tau_D} \right) \sum_{e=0}^n \fracpart{x^{-1} p^e}
                + \sum_{e=0}^n F_{x^{-1}}(e).
           \]
        \end{corollary}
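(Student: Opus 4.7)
The plan is to combine \Cref{lemma: delta sum by exponent} and \Cref{lemma: delta sum for exponent} and then evaluate a geometric series. First, apply \Cref{lemma: delta sum by exponent} to rewrite the full sum as a double sum
\[
\sum_{i=1}^{\floor{x^{-1} p^n}} \delta(i) = \sum_{e=0}^{n} \sum_{i=1}^{\floor{x^{-1} p^e}} \delta_0(i).
\]
Then substitute the formula from \Cref{lemma: delta sum for exponent} for the inner sum. This splits the outer sum over $e$ into three independent pieces to be analyzed: a geometric piece in $x^{-1} p^e$, a fractional-part correction $\fracpart{x^{-1} p^e}$, and the periodic remainder $F_{x^{-1}}(e)$.

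The second and third pieces require no further work: they appear verbatim in the statement, since summation over $e = 0, \ldots, n$ is exactly what shows up on the right-hand side. So the only nontrivial step is the first piece. I would evaluate
\[
\frac{1}{2}\left(1 - \frac{1}{p}\right)\left(1 - \frac{1}{\tau_D}\right) x^{-1} \sum_{e=0}^{n} p^e = \frac{1}{2}\left(1 - \frac{1}{p}\right)\left(1 - \frac{1}{\tau_D}\right) x^{-1} \cdot \frac{p^{n+1} - 1}{p-1}.
\]
Using the identity $(1 - 1/p)/(p-1) = 1/p$, this collapses to
\[
\frac{1}{2}\left(1 - \frac{1}{\tau_D}\right) x^{-1} \cdot \frac{p^{n+1} - 1}{p} = \frac{1}{2}\left(1 - \frac{1}{\tau_D}\right)\left(p^{n} - \frac{1}{p}\right) x^{-1},
\]
which matches the main term in the corollary.

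There is no genuine obstacle — the corollary is essentially bookkeeping on top of the two preceding lemmas. The only point requiring care is the algebraic simplification of the constant in front of the geometric series, which must be carried out precisely so that the $-1/p$ correction absorbs cleanly into the main term rather than surfacing as a separate error.
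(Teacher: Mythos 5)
Your proposal is correct and matches the paper's proof exactly: combine \Cref{lemma: delta sum by exponent} with \Cref{lemma: delta sum for exponent} and evaluate the geometric series $\sum_{e=0}^n p^e = (p^{n+1}-1)/(p-1)$. The algebraic simplification you carry out is precisely the one needed, and you correctly identify that the other two terms pass through untouched.
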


        \begin{proof}
            We combine \Cref{lemma: delta sum by exponent} with \Cref{lemma: delta sum for exponent} to obtain
            \begin{align*}
                \sum_{i=1}^{\floor{x^{-1} p^n}} \delta(i) &= 
                \sum_{e=0}^n \sum_{i=1}^{\floor{x^{-1} p^e}} \delta_0(i) \tag{\Cref{lemma: delta sum by exponent}} \\
                &= \sum_{e=0}^n \left[ \frac{1}{2} 
                \left( 1 - \frac{1}{p} \right)
                \left( 1 - \frac{1}{\tau_D} \right)
                \left( x^{-1} p^e - \fracpart{x^{-1} p^e} \right) 
                + F_{x^{-1}}(e)  \right]\tag{\Cref{lemma: delta sum for exponent}} \\
                &= \sum_{e=0}^n \frac{1}{2} 
                \left( 1 - \frac{1}{p} \right)
                \left( 1 - \frac{1}{\tau_D} \right)
                \left( x^{-1} p^e\right) - \sum_{e=0}^n \frac{1}{2} 
                \left( 1 - \frac{1}{p} \right)
                \left( 1 - \frac{1}{\tau_D} \right) \fracpart{x^{-1} p^e} + \sum_{e=0}^n F_{x^{-1}}(e) \\
                &= \frac{1}{2} 
                \left( 1 - \frac{1}{p} \right)
                \left( 1 - \frac{1}{\tau_D} \right) x^{-1} \sum_{e=0}^n p^e - \frac{1}{2} 
                \left( 1 - \frac{1}{p} \right)
                \left( 1 - \frac{1}{\tau_D} \right) \sum_{e=0}^n \fracpart{x^{-1} p^e} + \sum_{e=0}^n F_{x^{-1}}(e).
            \end{align*}
            We simplify the geometric series:
            \begin{align*}
                \frac{1}{2} 
                \left( 1 - \frac{1}{p} \right)
                \left( 1 - \frac{1}{\tau_D} \right) x^{-1} \sum_{e=0}^n p^e 
                &= \frac{1}{2} 
                \left( 1 - \frac{1}{p} \right)
                \left( 1 - \frac{1}{\tau_D} \right) x^{-1} \frac{p^{n+1} -1}{p-1} \\
                &= \frac{1}{2} 
                \left( 1 - \frac{1}{\tau_D} \right) \left(p^{n} -\frac{1}{p}\right) x^{-1}.
            \end{align*}
            Therefore we conclude
            \begin{align*}
                \sum_{i=1}^{\floor{x^{-1} p^n}} \delta(i) 
                &= 
                \frac{1}{2}
                \left( 1 - \frac{1}{\tau_D} \right)
                \left( p^n - \frac{1}{p} \right) x^{-1}
                - \frac{1}{2} \left( 1 - \frac{1}{p} \right) \left( 1 - \frac{1}{\tau_D} \right) \sum_{e=0}^n \fracpart{x^{-1} p^e}
                + \sum_{e=0}^n F_{x^{-1}}(e). \qedhere
            \end{align*}
        \end{proof}

        \begin{proposition} \label{prop: delta sum}
            Suppose that $x_D = \tau_D$. Then \(F_{x^{-1}}(e)\) is periodic for \(e \geq D_{x^{-1}} + 1\) with period $L_{x^{-1}}$. Moreover, for $n \geq D_{x^{-1}}$ we have
            \begin{align*}
                \sum_{i=1}^{\floor{x^{-1} p^n}} \delta(i) 
                &= \frac{1}{2} x^{-1}
                \left( 1 - \frac{1}{\tau_D} \right) p^n
                +
                \left(
                \avg{F_{x^{-1}}} 
                - \frac{\avg{x^{-1}}}{2p} 
                \left( 1 - \frac{1}{\tau_D}\right) 
                \right) n 
                + B_{x^{-1}}(n)
            \end{align*}
            where \(B_{x^{-1}}(n)\) is a periodic function for \(n \geq D_{x^{-1}}\) with period \(L_{x^{-1}}\) defined in \eqref{eq:Bdef}.
        \end{proposition}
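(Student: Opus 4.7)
The plan is to combine Corollary \ref{cor: delta sum over exponents} with an analysis of $F_{x^{-1}}(e)$ as an eventually periodic function, apply Corollary \ref{cor: eventually periodic sum} to the remaining sums in $e$, and simplify.

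First I would establish the claimed periodicity of $F_{x^{-1}}(e)$. The only $e$-dependent piece in its definition is $\floor{x^{-1} p^e} \bmod \tau_D p$. Since $x_D = \tau_D$ we have $(x^{-1})_N = x_D = \tau_D$, so Lemma \ref{lemma: floors}(i) gives that $\floor{x^{-1} p^e} \bmod \tau_D$ is periodic for $e \geq D_{x^{-1}}$ with period $L_{x^{-1}}$, while Lemma \ref{lemma: floors}(ii) gives that $\floor{x^{-1} p^e} \bmod p$ is periodic for $e \geq D_{x^{-1}}+1$ with period $L_{x^{-1}}$. Because $\gcd(\tau_D,p)=1$, the Chinese remainder theorem then shows $\floor{x^{-1} p^e} \bmod \tau_D p$ is periodic for $e \geq D_{x^{-1}}+1$ with period $L_{x^{-1}}$, which transfers directly to $F_{x^{-1}}(e)$.

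Next I would apply Corollary \ref{cor: eventually periodic sum} to the two sums in $e$ appearing in Corollary \ref{cor: delta sum over exponents}. By Lemma \ref{lemma: fractionalparts}, the sequence $\fracpart{x^{-1} p^e}$ is periodic for $e \geq D_{x^{-1}}$ with period $L_{x^{-1}}$ and average value $\avg{x^{-1}}/(p-1)$, so
\[
\sum_{e=0}^n \fracpart{x^{-1} p^e} = \frac{\avg{x^{-1}}}{p-1}\, n + P_1(n),
\]
where $P_1(n)$ is periodic for $n \geq D_{x^{-1}}$ with period $L_{x^{-1}}$. Similarly, the periodicity of $F_{x^{-1}}$ established above yields
\[
\sum_{e=0}^n F_{x^{-1}}(e) = \avg{F_{x^{-1}}}\, n + P_2(n),
\]
with $P_2(n)$ periodic for $n \geq D_{x^{-1}}$ of the same period.

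Substituting these into Corollary \ref{cor: delta sum over exponents}, the $p^n$-coefficient becomes $\tfrac{1}{2} x^{-1}(1 - 1/\tau_D)$, while the coefficient of $n$ combines to
\[
\avg{F_{x^{-1}}} - \frac{1}{2}\paren{1 - \frac{1}{p}}\paren{1 - \frac{1}{\tau_D}} \cdot \frac{\avg{x^{-1}}}{p-1} = \avg{F_{x^{-1}}} - \frac{\avg{x^{-1}}}{2p}\paren{1 - \frac{1}{\tau_D}},
\]
matching the formula. The remaining $n$-independent constant $-\tfrac{x^{-1}}{2p}(1 - 1/\tau_D)$ and the two periodic remainders $P_1(n), P_2(n)$ (suitably scaled) combine into the function $B_{x^{-1}}(n)$, which is periodic for $n \geq D_{x^{-1}}$ with period $L_{x^{-1}}$.

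The main obstacle is the periodicity claim for $F_{x^{-1}}(e)$: the bookkeeping between the moduli $\tau_D$ and $p$ (and the shift of one in the delay coming from Lemma \ref{lemma: floors}(ii)) must be handled carefully, and the hypothesis $x_D = \tau_D$ is what allows the floor to be read modulo $\tau_D p$ via CRT. Once this periodicity is in hand the rest is a direct application of Corollary \ref{cor: eventually periodic sum} and algebraic simplification.
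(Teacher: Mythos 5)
Your proposal is correct and follows essentially the same route as the paper: establish periodicity of $F_{x^{-1}}(e)$ via \Cref{lemma: floors}, apply \Cref{cor: eventually periodic sum} to the two $e$-sums from \Cref{cor: delta sum over exponents}, and collect terms. The one difference is that you spell out the Chinese remainder step (combining the mod-$\tau_D$ and mod-$p$ periodicities from parts (i) and (ii) of the floor lemma, using $x_D=\tau_D$ to identify $(x^{-1})_N=\tau_D$), whereas the paper cites \Cref{lemma: floors} without elaboration; the paper, for its part, explicitly flags the minor indexing adjustment needed when the $e$-sums start at $0$ rather than $1$.
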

        
        \begin{proof}
            Note that $F_{x^{-1}}(n)$ depends only on the value of $\lfloor x^{-1} p^n \rfloor$ modulo $\tau_Dp$. By \Cref{lemma: floors}, $\lfloor x^{-1} p^n \rfloor$ is periodic modulo $\tau_Dp$ for \(n \geq D_{x^{-1}} + 1\) with period $L_{x^{-1}}$. This gives the first claim.
            
            By \Cref{cor: eventually periodic sum}, for all $n \geq D_{x^{-1}}$ we have
            \begin{equation*}
                \sum_{e = 0}^n F_{x^{-1}}(e)
                = 
                \sum_{e = 0}^{D_{x^{-1}}} F_{x^{-1}}(e) + \avg{F_{x^{-1}}}(n-D_{x^{-1}}) + 
                \sum_{k = 1}^{(n - D_{x^{-1}}) \bmod L_x} (F_{x^{-1}}(k + D_{x^{-1}})-\avg{F_{x^{-1}}}) 
                = 
                \avg{F_{x^{-1}}} \cdot n + B_{x^{-1}}'(n),
            \end{equation*}
            where $B_{x^{-1}}'(n)$ is periodic for \(n \geq D_{x^{-1}}\) with period $L_{x^{-1}}$. Note that the above application of \Cref{cor: eventually periodic sum} is slightly different in that the 0-indexed term \(F_{x^{-1}}(0)\) is included in the summation. However, this term is excluded from the repeating portion of the sequence and is subsumed by \(B_{x^{-1}}'(n)\).
    
            On the other hand, by \Cref{lemma: fractionalparts} we know that $\{x^{-1} p^e\}$ is periodic for \(e \geq D_{x^{-1}}\) with period $L_{x^{-1}}$ and average value $\avg{x^{-1}}/(p-1)$. By \Cref{cor: eventually periodic sum}, for all $n \geq D_{x^{-1}} - 1$ we have 
            \begin{align*}
                \sum_{e = 0}^n \fracpart{x^{-1} p^e}
                &= 
                \sum_{e = 0}^{D_{x^{-1}}-1} \fracpart{x^{-1} p^e} + 
                \frac{\avg{x^{-1}}}{p-1}(n-D_{x^{-1}}+1) + 
                \sum_{k = 1}^{(n - D_{x^{-1}} + 1) \mod L_{x^{-1}}} \left(\fracpart{x^{-1} p^k} - 
                \frac{\avg{x^{-1}}}{p-1}\right) 
            \\
                &= \frac{\avg{x^{-1}}}{p-1} \cdot n + B_{x^{-1}}''(n),
            \end{align*}
            where \(B_{x^{-1}}''(n)\) is periodic for \(n \geq D_{x^{-1}} - 1\) with period \(L_{x^{-1}}\). Once again, \(B_{x^{-1}}''(n)\) subsumes the 0-indexed term. To complete the proof we apply \Cref{cor: delta sum over exponents} and set
            \begin{equation} \label{eq:Bdef}
                B_{x^{-1}}(n) \colonequals 
                B'_{x^{-1}}(n) - 
                \frac{1}{2} \left( 1 - \frac{1}{p} \right) \left( 1 - \frac{1}{\tau_D} \right) B''_{x^{-1}} (n) - \frac{1}{2p}\left (1 - \frac{1}{\tau_D} \right )x^{-1}. \qedhere
            \end{equation}
        \end{proof}

        \begin{example} \label{ex: deltasum p5d4}
            Let $p=5$, $d=4$, and \(r = 2\).  Then we have       
            \begin{align*}
                \sum_{i=1}^{\floor{\tau^{-1} p^n}} \delta(i) 
                &= 
                \frac{1}{6} \cdot 5^n +
                \begin{cases}
                        -\frac{1}{6} & \text{if \(n\) is even} \\
                        \phantom{-}\frac{1}{6} & \text{if \(n\) is odd}
                    \end{cases}
            \\
                \sum_{i=1}^{\floor{\gamma^{-1} p^n}} \delta(i) 
                &=
                \frac{1}{14} \cdot 5^n + \frac{1}{3} n +
                c^{(2)}(n)
            \end{align*}
            where \(c^{(2)}(n)\) is given in \Cref{table: delta sum p5d4 quasiconstant}.  
            \begin{table}[H]    \label{table: delta sum p5d4 quasiconstant}
                \begin{tabular}{|l|p{1cm}|p{1cm}|p{1cm}|p{1cm}|p{1cm}|p{1cm}|}
                    \hline
                    \(n \pmod{6}\)     & \(0\) & \(1\) & \(2\) & \(3\) & \(4\) & \(5\) \\ \hline
                    \(c^{(2)}(n)\)
                    & \(-1/14\) & \(13/42\) & \(23/42\)
                    & \(1/14\) & \(1/42\) & \(5/42\)                \\ \hline
                \end{tabular}
                \caption{Value of periodic function \(c^{(2)}(n)\) for \(n \geq 0.\)}
            \end{table}    
        \end{example}

        \begin{remark}
        Note the formulas in this subsection appears quite similar to ``classical'' formulas in Iwasawa theory, i.e. of the form $\mu p^n + \lambda n + \nu$,
with the only difference being that $\nu$ is a periodic function of $n$.  In contrast, the formulas for the sums in Section~\ref{sec: sums floor} are of the form $\alpha p^{2n}+ \beta p^{n} + \gamma$ with $\gamma$ periodic: ``classical'' formulas in Iwasawa theory do not include a $p^{2n}$ term.
        \end{remark}

    \subsection{Proof of the Main Theorem} \label{sec: main thm}
        We will now prove \Cref{thm: main theorem} by combining \Cref{prop: new sum breakdown} with \Cref{cor: difference of floor sums,cor: difference of delta sums} and attempting to simplify the resulting formula.
        The main simplification comes from the following.
   
        \begin{proposition} \label{prop: linear coefficient of tau inverse sum is zero}
            The coefficient of \(n\) in \Cref{prop: delta sum} is zero when $x = \tau$. That is,
            \[\avg{F_{\tau^{-1}}} - \frac{\avg{\tau^{-1}}}{2p} \left( 1 - \frac{1}{\tau_D}\right) = 0.\]
        \end{proposition}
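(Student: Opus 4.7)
The plan is to compute $\avg{F_{\tau^{-1}}}$ directly and verify the identity by a short algebraic rearrangement. Throughout, let $m \colonequals \avg{\delta_0} = \frac{1}{2}(1 - 1/p)(1 - 1/\tau_D)$ denote the constant appearing in the definition of $F_{\tau^{-1}}$.

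First I would write $\tau^{-1} = d/(p+1)$ in base $p$. The identity $p \tau^{-1} = (d-1) + (p+1-d)/(p+1)$, together with its counterpart $p \cdot (p+1-d)/(p+1) = (p-d) + d/(p+1)$, shows that the base-$p$ expansion is $0.\overline{(d-1)(p-d)}$. Hence $\avg{\tau^{-1}} = (p-1)/2$, and a direct computation yields $\floor{\tau^{-1} p} = d-1$ and $\floor{\tau^{-1} p^2} = d(p-1) \equiv \tau_D p - d \pmod{\tau_D p}$, where $\tau_D p = dp/\gcd(d,2)$. Since $L_{\tau^{-1}}$ divides $2$, these two values suffice to determine $\avg{F_{\tau^{-1}}}$.

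Setting $S_1 \colonequals \sum_{k=1}^{d-1}\delta_0(k)$ and $S_2' \colonequals \sum_{k=1}^{\tau_D p - d}\delta_0(k)$, the definition of $F_{\tau^{-1}}$ gives $F_{\tau^{-1}}(1) = S_1 - (d-1)m$ and $F_{\tau^{-1}}(2) = S_2' - (\tau_D p - d)m$. To evaluate $S_1$, I would apply the involution $j \mapsto d - j$ on $\{1, \ldots, d-1\}$: by \Cref{thm: delta}(iii,iv), each pair $\{j, d-j\}$ contributes exactly $1$, except for the unique fixed point $j = d/2$ when $d$ is even, where $\delta_0$ vanishes. This yields $S_1 = (d - \gcd(d,2))/2$. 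For $S_2'$, I would use the symmetry $\delta_0(k) + \delta_0(\tau_D p - k) = 1$ (with the same exceptions) together with the full-period identity $\sum_{k=1}^{\tau_D p} \delta_0(k) = \tau_D p \cdot m$ to show $\sum_{k = \tau_D p - d + 1}^{\tau_D p} \delta_0(k) = S_1$, whence $S_2' = \tau_D p \cdot m - S_1$.

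Substituting these formulas, the $S_1$ contributions cancel in the sum $F_{\tau^{-1}}(1) + F_{\tau^{-1}}(2)$ and the $m$-coefficients telescope to yield exactly $m$. Thus $\avg{F_{\tau^{-1}}} = m/2$, and the desired identity then reduces to the routine rearrangement $m/2 = \frac{(p-1)(1 - 1/\tau_D)}{4p} = \frac{\avg{\tau^{-1}}}{2p}(1 - 1/\tau_D)$. The main obstacle is essentially bookkeeping: the symmetries of $\delta_0$ must be applied carefully, keeping track of when indices are multiples of $\tau_D$ or $p$ (where $\delta_0$ vanishes rather than satisfying the reflection identity), and handling uniformly the two parities of $d$ so that the fixed-point contribution to $S_1$ is accounted for correctly.
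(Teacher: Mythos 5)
Your proposal is correct and follows essentially the same route as the paper: expand $\tau^{-1}$ in base $p$, reduce $\floor{\tau^{-1}p^e}$ modulo $\tau_Dp$ to $d-1$ or $\tau_Dp-d$, and use the reflection symmetries of $\delta_0$ to show that $F_{\tau^{-1}}(1)+F_{\tau^{-1}}(2)$ reduces to a sum of $\delta_0-m$ over (essentially) a full period. One small note: the identity $\delta_0(j)+\delta_0(d-j)=1$ that you invoke on $\{1,\dots,d-1\}$ is \Cref{prop: delta negation reduction}(iii)--(iv), not \Cref{thm: delta}(iii)--(iv) (which treats only the $\tau_Dp$ reflection), and your explicit value $S_1=(d-\gcd(d,2))/2$ is unnecessary since $S_1$ cancels out of $S_1+S_2'$, which is why the paper never computes it.
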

        
        \begin{proof}
            As $d < p $, we see that the base-$p$ expansion of $\tau^{-1} = \frac{d}{p+1}$ is
            \begin{equation}    \label{eq: basep tau}
                \frac{d}{p+1} = 
                (d-1)p^{-1} + (p-d)p^{-2} + (d-1) p^{-3} + (p-d) p^{-4} + \dots.
            \end{equation}
            Thus $\avg{\tau^{-1}} = \frac{1}{2}((d-1)+(p-d)) = (p-1)/2$, so that
            \begin{equation*}
                \frac{\avg{\tau^{-1}}}{2p} \left ( 1 - \frac{1}{\tau_D} \right) = \frac{1}{4} \left(1 - \frac{1}{p} \right) \left( 1-\frac{1}{\tau_D} \right).
            \end{equation*}

            To evaluate $\avg{F_{\tau^{-1}}} = \frac{1}{2} ( F_{\tau^{-1}}(1) + F_{\tau^{-1}}(2))$, note that $( \floor{\tau^{-1} p^e} \bmod p)$ is $d-1$ if $e$ is a positive odd integer and is $p-d$ if $e$ is a positive even integer.  Similarly, $( \floor{\tau^{-1} p^e} \bmod \tau_D)$ is $\tau_D-1$ if $e$ is a positive odd integer and $0$ if $e$ is a positive even integer. The Chinese Remainder Theorem gives 
            \[
            \paren{ \floor{\tau^{-1} p^e} \bmod \tau_Dp } = 
            \begin{cases}
                0 & \text{if } e = 0                                    \\
                d - 1 & \text{if \(e\) is odd}                          \\
                \tau_Dp - d & \text{if \(e\) is even and \(e > 0\)}.
            \end{cases}
            \]
            Recalling the definition of $F_{x^{-1}}(n)$ from \Cref{lemma: delta sum for exponent}, we see
            \begin{align*}
                F_{\tau^{-1}}(1) &=  \sum_{i=1}^{d-1} \left ( \delta_0(i) - \frac{1}{2} \left( 1 - \frac{1}{p} \right) \left (1 - \frac{1}{\tau_D} \right) \right )     \\
                F_{\tau^{-1}}(2) & =  \sum_{i=1}^{\tau_Dp-d} \left ( \delta_0(i) - \frac{1}{2} \left( 1 - \frac{1}{p} \right) \left (1 - \frac{1}{\tau_D} \right) \right ).
            \end{align*}
            Thus we are led to consider the sum
            \begin{equation} 
            \sum_{i=1}^{d-1} \delta_0(i) + \sum_{i=1}^{\tau_Dp-d} \delta_0(i).
            \end{equation}
            By \Cref{prop: delta negation reduction}, for \(1 \leq i \leq d-1\), we have \(\delta_0(d - i) = \delta_0(\tau_Dp - i)\). Thus
            \begin{equation}
                \sum_{i=1}^{d-1} \delta_0(i) = \sum_{i=1}^{d-1} \delta_0(d - i) = \sum_{i=1}^{d-1} \delta_0(\tau_Dp - i) = \sum_{i=\tau_Dp-d+1}^{\tau_Dp-1} \delta_0(i),
            \end{equation}
            so by \Cref{prop: number of subpalindromics in a tau_Dp interval},
            \begin{equation}    \label{eq: key delta sum}
                \sum_{i=1}^{d-1} \delta_0(i) + \sum_{i=1}^{\tau_Dp-d} \delta_0(i) = \sum_{i=\tau_Dp-d+1}^{\tau_Dp-1} \delta_0(i) + \sum_{i=1}^{\tau_Dp - d} \delta_0(i) = 
                \sum_{i=1}^{\tau_Dp-1} \delta_0(i) = \frac{(p-1)(\tau_D-1)}{2}.
            \end{equation}
            Thus we see that 
            \begin{align*}
                \frac{1}{2} (F_{\tau^{-1}}(1) + F_{\tau^{-1}}(2)) &=
                \frac{1}{2} \left( \left(\sum_{i=1}^{d-1} \delta_0(i) + \sum_{i=1}^{\tau_Dp-d} \delta_0(i)  \right) - \frac{1}{2} (\tau_Dp -1) \left(1 - \frac{1}{p} \right) \left( 1-\frac{1}{\tau_D} \right) \right)
            \\
                &= \frac{1}{4} \left(1 - \frac{1}{p} \right) \left( 1-\frac{1}{\tau_D} \right) =  \frac{\avg{\tau^{-1}}}{2p} \left( 1 - \frac{1}{\tau_D}\right). \qedhere
            \end{align*}
        \end{proof}

        \begin{remark}
           We do not have a conceptual explanation for why the coefficient of $n$ is zero in \Cref{prop: linear coefficient of tau inverse sum is zero}.  
        \end{remark}

        \begin{corollary}  \label{cor: difference of delta sums}
            With $B_{x^{-1}}(n)$ as in \Cref{prop: delta sum},
            \begin{align*}
                \sum_{i=\floor{\gamma^{-1} p^n} +1 }^{\lfloor  \tau^{-1} p^n \rfloor} \delta(i)
                = \frac{1}{2} \left(  \tau^{-1} - \gamma^{-1} \right) \left( 1 - \frac{1}{\tau_D} \right) p^n
                +
                \left( 
                \frac{\avg{\gamma^{-1}}}{2p} \left( 1 - \frac{1}{\tau_D}\right)
                -
                \avg{F_{\gamma^{-1}}}
                \right) n
               + \paren{B_{\tau^{-1}}(n) - B_{\gamma^{-1}}(n)}.
            \end{align*}
        \end{corollary}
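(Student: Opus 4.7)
The plan is to realize the finite-interval sum as a telescoping difference of two prefix sums and to apply \Cref{prop: delta sum} to each. Concretely, one writes
\[
    \sum_{i=\floor{\gamma^{-1} p^n} +1 }^{\lfloor  \tau^{-1} p^n \rfloor} \delta(i)
    = \sum_{i=1}^{\lfloor  \tau^{-1} p^n \rfloor} \delta(i) - \sum_{i=1}^{\lfloor  \gamma^{-1} p^n \rfloor} \delta(i),
\]
and hopes to plug in \Cref{prop: delta sum} with $x=\tau$ and $x=\gamma$ respectively.

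Before doing that, I would check that the hypothesis $x_D = \tau_D$ of \Cref{prop: delta sum} holds in both instances. For $x=\tau$ it is immediate. For $x=\gamma = ((p-1)r + (p+1))/d$, the assumption $d \mid p-1$ gives $p \equiv 1 \pmod d$, and hence $(p-1)r + (p+1) \equiv 2 \pmod d$. Therefore $\gcd(d,(p-1)r+(p+1)) = \gcd(d,2)$, which yields $\gamma_D = d/\gcd(d,2) = \tau_D$, as required.

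Having verified the hypothesis, I apply \Cref{prop: delta sum} with $x=\tau$ and $x=\gamma$. For $x=\tau$, the $p^n$-coefficient is $\tfrac{1}{2}\tau^{-1}(1-1/\tau_D)$, and the coefficient of $n$ is $\avg{F_{\tau^{-1}}} - \tfrac{\avg{\tau^{-1}}}{2p}(1 - 1/\tau_D)$, which equals zero by \Cref{prop: linear coefficient of tau inverse sum is zero}; the remaining periodic term is $B_{\tau^{-1}}(n)$. For $x=\gamma$, the formula gives $\tfrac{1}{2}\gamma^{-1}(1-1/\tau_D)\,p^n + (\avg{F_{\gamma^{-1}}} - \tfrac{\avg{\gamma^{-1}}}{2p}(1 - 1/\tau_D)) n + B_{\gamma^{-1}}(n)$.

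Subtracting the second from the first collects the $p^n$ terms into $\tfrac12(\tau^{-1}-\gamma^{-1})(1-1/\tau_D)\,p^n$, negates the $n$-coefficient coming from $\gamma$ to yield $\bigl(\tfrac{\avg{\gamma^{-1}}}{2p}(1-1/\tau_D) - \avg{F_{\gamma^{-1}}}\bigr) n$, and assembles the periodic remainder $B_{\tau^{-1}}(n) - B_{\gamma^{-1}}(n)$. This matches the statement exactly. There is no real obstacle here beyond the small $\gcd$ computation used to validate the applicability of \Cref{prop: delta sum} to $\gamma$; everything else is a mechanical termwise subtraction, with the vanishing secured by \Cref{prop: linear coefficient of tau inverse sum is zero}.
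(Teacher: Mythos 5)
Your proposal is correct and matches the paper's proof, which simply combines \Cref{prop: delta sum} with \Cref{prop: linear coefficient of tau inverse sum is zero} after splitting the interval sum into a difference of prefix sums. The extra check that $\gamma_D = \tau_D$ is a nice bit of diligence that the paper leaves implicit.
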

        \begin{proof}
            Combine \Cref{prop: delta sum} (as $\tau_D = \gamma_D$) and \Cref{prop: linear coefficient of tau inverse sum is zero}.
        \end{proof}
    
        We now synthesize all our previous results to arrive at a version of our main theorem.
        
        \begin{theorem} \label{thm: main v.1}
            With the standard setup, there exists a function $\nu_r: \N \to \Q$ such that
            \begin{align*}
                a^r(X_n) 
                &=
                \frac{1}{2} \left( \tau^{-1} - \gamma^{-1} \right) p^{2n} + \lambda_r n + \nu_r(n)
            \end{align*}
            where 
            \[\lambda_r =
            \avg{F_{\gamma^{-1}}} -
            \frac{\avg{\gamma^{-1}}}{2p} \left( 1 - \frac{1}{\tau_D}\right)
            \]
              and \(\nu_r(n)\) is periodic for \(n \geq D_{\gamma^{-1}}\) with minimal period $\lcm (L_{\gamma^{-1}},2)$ or $\frac{1}{2} \cdot \lcm (L_{\gamma^{-1}}, 2)$.
        \end{theorem}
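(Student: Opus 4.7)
The plan is to simply combine the three structural results we have already assembled: \Cref{prop: new sum breakdown}, \Cref{cor: difference of floor sums}, and \Cref{cor: difference of delta sums}. From \Cref{prop: new sum breakdown} we have
\[
a^r(X_n) = \left(\sum_{i=1}^{\floor{\tau^{-1} p^n}} (p^n - \floor{\tau i}) - \sum_{i=1}^{\floor{\gamma^{-1} p^n}} (p^n - \floor{\gamma i})\right) - \sum_{i=\floor{\gamma^{-1} p^n}+1}^{\floor{\tau^{-1} p^n}} \delta(i),
\]
and we apply \Cref{cor: difference of floor sums} to the first piece and \Cref{cor: difference of delta sums} to the second. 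Before doing so I need to verify the hypothesis \(x_D = \tau_D\) of \Cref{prop: delta sum} is satisfied for both \(x = \tau\) and \(x = \gamma\); since \((p-1)r + (p+1) = (p-1)(r+1) + 2\) and \(d \mid p-1\), we indeed have \(\gcd(d,(p-1)r+(p+1)) = \gcd(d,2)\), so \(\gamma_D = d/\gcd(d,2) = \tau_D\).

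The central observation (and the reason the formula is so clean) is that the \(p^n\) coefficient produced by \Cref{cor: difference of floor sums} is exactly
\[
\tfrac{1}{2}\paren{\tau^{-1} - \gamma^{-1}}\paren{1 - \tfrac{1}{\tau_D}},
\]
and this coincides precisely with the \(p^n\) coefficient produced by \Cref{cor: difference of delta sums}. Since the delta contribution enters \Cref{prop: new sum breakdown} with a minus sign, these \(p^n\) terms cancel. The \(p^{2n}\) coefficient is contributed only by the floor sums, giving \(\tfrac{1}{2}(\tau^{-1}-\gamma^{-1})\). The \(n\)-coefficient is contributed only by the delta sums; after the sign flip it becomes
\[
\lambda_r = \avg{F_{\gamma^{-1}}} - \tfrac{\avg{\gamma^{-1}}}{2p}\paren{1 - \tfrac{1}{\tau_D}},
\]
because \Cref{prop: linear coefficient of tau inverse sum is zero} has already killed the corresponding contribution from the \(\tau^{-1}\) side. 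The remaining terms \(A_{\tau^{-1}}(n) - A_{\gamma^{-1}}(n) - B_{\tau^{-1}}(n) + B_{\gamma^{-1}}(n)\) are collected into \(\nu_r(n)\).

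It remains to identify the periodicity. Each of \(A_{\tau^{-1}}, A_{\gamma^{-1}}, B_{\tau^{-1}}, B_{\gamma^{-1}}\) is periodic for \(n \geq D_{x^{-1}}\) with period \(L_{x^{-1}}\) by the cited propositions. Since \(v_p(\tau^{-1}) = 0\), \Cref{lemma: Lx} gives \(D_{\tau^{-1}} = 0\), so the global threshold is \(D_{\gamma^{-1}}\). The global period divides \(\lcm(L_{\tau^{-1}}, L_{\gamma^{-1}})\), so it suffices to show \(L_{\tau^{-1}} \mid 2\). By \Cref{lemma: Lx}, \(L_{\tau^{-1}}\) is the multiplicative order of \(p\) modulo \((\tau^{-1})_D = (p+1)/\gcd(d,2)\), which divides \(p+1\); but \(p \equiv -1 \pmod{p+1}\), so \(p^2 \equiv 1\) and the order is at most \(2\). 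This yields periodicity with period dividing \(\lcm(L_{\gamma^{-1}}, 2)\), as claimed.

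The proof is essentially bookkeeping once \Cref{prop: linear coefficient of tau inverse sum is zero} is in hand; the only subtlety I anticipate is keeping careful track of indexing and the definition of the various \(A_{x^{-1}}\), \(B_{x^{-1}}\), and \(F_{x^{-1}}\) to make sure the \(p^n\) cancellation is accounted for exactly and that the threshold \(D_{\gamma^{-1}}\) (rather than the larger \(D_{\gamma^{-1}} + 1\) appearing in parts of \Cref{prop: delta sum}) is indeed where all four summands become simultaneously periodic.
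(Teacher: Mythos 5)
Your proposal is correct and takes essentially the same approach as the paper: start from \Cref{prop: new sum breakdown}, substitute \Cref{cor: difference of floor sums} and \Cref{cor: difference of delta sums}, observe that the $p^n$ coefficients cancel, read off $\lambda_r$ and $\nu_r(n)$, and deduce periodicity from $D_{\tau^{-1}}=0$ and $L_{\tau^{-1}}\mid 2$. The only difference is cosmetic — you bound $L_{\tau^{-1}}$ via \Cref{lemma: Lx} and $p\equiv -1\pmod{p+1}$ rather than via the explicit expansion \eqref{eq: basep tau}, and you explicitly verify $\gamma_D=\tau_D$, a hypothesis the paper leaves implicit — but the decomposition and the key cancellations are identical.
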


        Recall that \Cref{lemma: Lx} shows that $D_{\gamma^{-1}} = -v_p(\gamma^{-1})$ and $L_{\gamma^{-1}}$ is the order of $p$ modulo $\gamma_N$.  Whether the minimal period of $\nu_r(n)$ is \(\lcm \left( L_{\gamma^{-1}}, 2 \right)\) or $\frac{1}{2} \cdot \lcm (L_{\gamma^{-1}}, 2)$ appears to be subtle: see \Cref{ex:what is L}.
    
        \begin{proof}
            Start with \Cref{prop: new sum breakdown}, and use
            \Cref{cor: difference of floor sums} for an expression for the sums involving floor functions and \Cref{cor: difference of delta sums} for the sums involving $\delta(i)$.
            Collecting term gives the formula, with 
            \begin{equation}
                \nu_r(n) \colonequals A_{\tau^{-1}}(n) - A_{\gamma^{-1}}(n) - B_{\tau^{-1}}(n) + B_{\gamma^{-1}}(n).
            \end{equation}
           Recall that $A_{x^{-1}}(n)$ and $B_{x^{-1}}(n)$ are periodic for $n \geq D_{x^{-1}}$ with period $L_{x^{-1}}$ (\Cref{prop: single floor sum,prop: delta sum}).
            Thus \(\nu_r(n)\) is periodic for \(n \geq \max\left(D_{\tau^{-1}}, D_{\gamma^{-1}}\right)\) with period 
          \(   \lcm (L_{\tau^{-1}}, L_{\gamma^{-1}}).\)
            Equation \eqref{eq: basep tau} shows that \(L_{\tau^{-1}} \leq 2\) and \(D_{\tau^{-1}} = 0\), which gives the claimed delay and \(\lcm \left( L_{\gamma^{-1}}, 2 \right)\) as a bound on the minimal period.

            We now show the minimal period is either \(\lcm \left( L_{\gamma^{-1}}, 2 \right)\) or half of it. We have that there is $\lambda_r \in \Q$ and $c : \N \to \Q$ such that for sufficiently large $n$,
                \begin{align*}
                    a^r(X_{n+L}) - a^r(X_n) 
                    &= \frac{1}{2}(\tau^{-1} - \gamma^{-1})p^{2(n+L)} + \lambda_r (n+L) + c(n + L) \\
                    &- \left( \frac{1}{2}(\tau^{-1} - \gamma^{-1})p^{2n} + \lambda_r n + c(n) \right)\\ 
                    &= \frac{1}{2}(\tau^{-1} - \gamma^{-1})(p^{2L} - 1)p^{2n} + \lambda_r L \\
                    &= \frac{1}{2}\frac{d}{p+1}(p^{2L} - 1)p^{2n} - \frac{1}{2}\gamma^{-1}(p^{2L} - 1)p^{2n} + \lambda_r L
                \end{align*}
            The left side is clearly an integer. Note that $\frac{1}{2}\frac{d}{p+1}(p^{2L} - 1)p^{2n}$ is an integer as well:  $p+1$ must divide one of $p^L-1$ or $p^L+1$ and the other is a multiple of $2$ (unless $p=2$, in which case $2 \mid p^{2n}$).  
            Similarly, we note that $(p^{2L} - 1)p^{2n} $ is even.  Furthermore, note that if we clear denominators by multiplying both sides by $\gamma_N$, then $\frac{1}{2} \gamma_N \gamma^{-1} (p^{2L} - 1)p^{2n}$ is an integer.  Thus we can conclude that $\gamma_N \lambda_r L$ is an integer. %
            Furthermore, reducing modulo $\gamma_N$ shows that
            \begin{equation*}
                0 \equiv -\frac{\gamma_D}{2}(p^{2L}-1)p^{2n-v_p(\gamma)} + \gamma_N \lambda_r L \pmod{\gamma_N}
            \end{equation*}
            As this holds for all sufficiently large $n$, we see that $p^{2L}-1 \equiv 0 \pmod{\gamma_N}$. In other words, by \Cref{lemma: Lx}, $L_{\gamma^{-1}} \mid 2L$.  As $L \mid \lcm(L_{\gamma^{-1}}, 2)$ by \Cref{thm: main v.1}, we see $L = \lcm(L_{\gamma^{-1}}, 2)$ or $L = \frac{1}{2}\lcm(L_{\gamma^{-1}}, 2)$. 
        \end{proof}
        
        \begin{example} \label{ex: final p5d4}
        We can now establish \Cref{ex: intro p5d4}, which has \(p = 5\), \(d=4\), and \(r=2\).  Combining \Cref{thm: main v.1} (or really just \Cref{prop: new sum breakdown}) with \Cref{ex: deltasum p5d4,ex: sum floors p5d4} we see that
        \[a^2(X_n) = \frac{4}{21} \cdot 5^{2n} + \frac{1}{3}n + \nu_2(n)\]
        where \(\nu_2(n)\) is given in \Cref{table: total p4d4 quasiconstant}. Note that the period given by the theorem (6) is not the minimal period (3), which we observe in this instance coincides with \(L_{\gamma^{-1}}\).
        \end{example}

        \begin{corollary}   \label{cor: d1d2}
            When $d=1 $ or $d=2$:
            \begin{enumerate}[(i)]
                \item $\lambda_r = 0$.
                \item The minimal period of $a^r(X_n)$ is
                $$ L =
                \begin{cases}
                    L_{\gamma^{-1}}  & \text{ if } L_{\gamma^{-1}} \text{ is odd} \\
                    L_{\gamma^{-1}} \text{ or } \frac{L_{\gamma^{-1}}}{2} & \text{ if } L_{\gamma^{-1}} \text{ is even.} \\
                \end{cases}
                $$
            \end{enumerate}
        \end{corollary}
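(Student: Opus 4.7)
The plan is to exploit the fact that $d \in \{1,2\}$ forces $\tau_D = 1$, which trivializes everything involving $\delta$. First, since $d \mid p-1$ gives $p+1 \equiv 2 \pmod d$, we have $\gcd(d, p+1) = \gcd(d, 2) = d$, so $\tau_D = d/\gcd(d,2) = 1$ for both $d=1$ and $d=2$; the analogous computation using $(p-1)r + (p+1) \equiv 2 \pmod d$ gives $\gamma_D = 1$. With $\tau_D \mid i$ for every $i$, \Cref{prop: delta negation reduction}(iv) immediately yields $\delta \equiv 0$, whence $\delta_0 \equiv 0$, $F_{x^{-1}} \equiv 0$, and $\avg{F_{\gamma^{-1}}} = 0$. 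Combined with the vanishing of $(1 - 1/\tau_D)$, the formula in \Cref{thm: main v.1} collapses to $\lambda_r = 0$, proving (i).

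For (ii), I would next simplify $\nu_r(n) = A_{\tau^{-1}}(n) - A_{\gamma^{-1}}(n) - B_{\tau^{-1}}(n) + B_{\gamma^{-1}}(n)$. Each $B_{x^{-1}}$ vanishes when $\tau_D = 1$, since all three of its summands carry a factor of either $(1-1/\tau_D)$ or $F_{x^{-1}}$. Likewise the sum in the definition of $A_{x^{-1}}$ from \Cref{prop: single floor sum} is empty (it runs from $1$ to $\lfloor x^{-1} p^n \rfloor \bmod 1 = 0$), leaving $A_{x^{-1}}(n) = \tfrac12 x (1 - \{x^{-1} p^n\})\{x^{-1} p^n\}$. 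The key observation is that equation \eqref{eq: basep tau} shows $\{\tau^{-1} p^n\}$ alternates between a value $a$ and $1-a$ (explicitly, $\{1/(p+1), p/(p+1)\}$ for $d=1$ and $\{2/(p+1), (p-1)/(p+1)\}$ for $d=2$, coinciding at $1/2$ when $p=3, d=2$). Since $t(1-t)$ is symmetric under $t \leftrightarrow 1-t$, the function $A_{\tau^{-1}}(n)$ is constant. Hence $\nu_r(n)$ differs from $-A_{\gamma^{-1}}(n)$ by a constant, and the minimal period of $a^r(X_n)$ equals that of $A_{\gamma^{-1}}(n)$.

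The main content of (ii) is then identifying this minimal period. Writing $a_n = \gamma_D p^n \bmod \gamma_N$, we have $a_n \in (\Z/\gamma_N\Z)^\times$ (using $\gcd(\gamma_D, \gamma_N) = 1$ and $p \nmid \gamma_N$, since $\gamma_N$ divides $(p-1)r+(p+1) \equiv 2 \pmod p$), and $A_{\gamma^{-1}}(n) = \frac{\gamma}{2 \gamma_N^2} a_n (\gamma_N - a_n)$. The condition $A_{\gamma^{-1}}(n+k) = A_{\gamma^{-1}}(n)$ for all $n \geq 0$ is equivalent to $a_{n+k} \in \{a_n, \gamma_N - a_n\}$, i.e.\ $p^k a_n \equiv \pm a_n \pmod{\gamma_N}$; multiplying by $a_n^{-1}$ collapses this to $p^k \equiv \pm 1 \pmod{\gamma_N}$, a condition on $k$ alone. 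Now $p^k \equiv 1$ forces $L_{\gamma^{-1}} \mid k$, while $p^k \equiv -1$ forces $L_{\gamma^{-1}} \mid 2k$ but not $L_{\gamma^{-1}} \mid k$, so it requires $L_{\gamma^{-1}}$ even with $k = L_{\gamma^{-1}}/2$ and $p^{L_{\gamma^{-1}}/2} \equiv -1 \pmod{\gamma_N}$. This produces exactly the claimed dichotomy. The subtle point I expect to be the main obstacle is verifying that the two choices $\pm 1$ cannot be mixed across different values of $n$; this relies on every $a_n$ being a unit modulo $\gamma_N$, and without this observation one might incorrectly admit exotic shorter periods.
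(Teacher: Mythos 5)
Your proof of part (i) matches the paper's in spirit (both reduce to $\tau_D=1\Rightarrow\delta\equiv 0$), though you go directly to the closed form for $\lambda_r$ in \Cref{thm: main v.1} rather than through \Cref{prop: new sum breakdown} and \Cref{cor: difference of floor sums}. Two small slips, neither fatal: the fact that $p\nmid\gamma_N$ follows directly from \Cref{defn: ND notation} (not from your claimed congruence $(p-1)r+(p+1)\equiv 2\pmod p$, which is actually $\equiv 1-r$); and your residue should be $a_n=\gamma_D\,p^{\,n-v_p(\gamma)}\bmod\gamma_N$ so that $\fracpart{\gamma^{-1}p^n}=a_n/\gamma_N$, since $v_p(\gamma)$ can be positive (e.g.\ $r\equiv 1\pmod p$). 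Both $a_n$ and your version differ only by a unit factor, so the argument is unaffected.

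Where you genuinely diverge from the paper is part (ii). The paper proves the upper bound (the period divides $L_{\gamma^{-1}}$, since $A_{\tau^{-1}}$ is constant, exactly as you do) and then obtains the lower bound by appealing as a black box to \Cref{thm: value of L actual}, which in turn rests on an integrality argument applied to $a^r(X_{n+L})-a^r(X_n)$. You instead give a direct, self-contained determination of the minimal period of $A_{\gamma^{-1}}(n)=\tfrac{\gamma}{2\gamma_N^2}a_n(\gamma_N-a_n)$ by observing $A_{\gamma^{-1}}(n+k)=A_{\gamma^{-1}}(n)$ for all $n$ iff $p^k\equiv\pm1\pmod{\gamma_N}$, correctly using that $a_n$ is a unit to show the sign cannot vary with $n$. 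This route is slightly more work but also strictly more informative than the paper's: it shows the minimal period equals $L_{\gamma^{-1}}/2$ precisely when $L_{\gamma^{-1}}$ is even \emph{and} $p^{L_{\gamma^{-1}}/2}\equiv -1\pmod{\gamma_N}$, which resolves the ambiguity in the stated dichotomy (for $d\le 2$) that the paper leaves open and comments on in the remark after \Cref{thm: value of L actual}. You may want to state that refinement explicitly as a bonus.
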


        \begin{proof}
            When \(d=1\) or \(d = 2\), then $\tau_D =1$ (recall \Cref{defn: ND notation}), so \(\delta(i) = 0\) for all integers \(i\). Hence, by \Cref{prop: new sum breakdown} and \Cref{cor: difference of floor sums},
            \begin{align*}
                a^r(X_n) &= 
                \sum_{i=1 }^{\floor{\tau^{-1} p^n}} 
                (p^n - \floor{\tau i}) 
                -
                \sum_{i=1}^{\floor{\gamma^{-1} p^n}}  (p^n - \floor{\gamma i})
            \\
                &=
                \frac{1}{2} \left( \tau^{-1} - \gamma^{-1} \right) p^{2n}
                +
                \frac{1}{2} \left( \tau^{-1} - \gamma^{-1} \right) \left( 1 - \frac{1}{\tau_D} \right)p^n 
                +
                \left( A_{\tau^{-1}}(n) - A_{\gamma^{-1}}(n) \right)
            \\
                &=
                \frac{1}{2} \left( \tau^{-1} - \gamma^{-1} \right) p^{2n}
                +
                \left( A_{\tau^{-1}}(n) - A_{\gamma^{-1}}(n) \right),
            \end{align*}
            showing \(\lambda_r = 0\). Furthermore, noting that \(\floor{\tau^{-1} p^n} \bmod \tau_D\) is \(0\) for \(\tau_D = 1\), \Cref{prop: single floor sum} gives
            \begin{align*}
                    A_{\tau^{-1}}(n)
                \colonequals
                    &\frac{1}{2} \left( -\left(1 - \frac{1}{\tau_D}\right) + \tau (1 - \fracpart{\tau^{-1} p^n}) \right) \fracpart{\tau^{-1} p^n} + \sum_{k=1}^{\floor{\tau^{-1} p^n} \bmod \tau_D} \left( \fracpart{\tau k} - \frac{1}{2} \left(1 - \frac{1}{\tau_D}\right) \right) \\
                     =& \frac{1}{2}\tau (1 - \fracpart{\tau^{-1} p^n}) \fracpart{\tau^{-1} p^n} .
            \end{align*}
            Notice that $1 - \fracpart{\tau^{-1}p^n} = \fracpart{\tau^{-1}p^{n+1}} = 1 - \fracpart{\tau^{-1}p^{n+2}}$. Hence, \(A_{\tau^{-1}}(n) = A_{\tau^{-1}}(n + 1)\), so $A_{\tau^{-1}}(n)$ is constant. Recalling that $A_{\gamma^{-1}}$ has period $L_{\gamma^{-1}}$, we conclude that \(A_{\tau^{-1}}(n) - A_{\gamma^{-1}}(n)\) and \(a^r(X_n)\) have period at most \(L_{\gamma^{-1}}\). \Cref{thm: main v.1} then yields the desired period.
 \end{proof}

\begin{example} \label{ex:what is L}
    We can observe the above corollary in \Cref{table: L1}, which shows the behavior of the period when $d = 2$, $p=5$ for $r = 1$ to $r = 16$.
        \begin{table}[ht]
            \begin{tabular}{l|llllllllllllllll}
            $r$ & $1$ & $2$ & $3$ & $4$ & $5$ & $6$ & $7$ & $8$ & $9$ & $10$ & $11$ & $12$ & $13$ & $14$ & $15$ & $16$ \\ \hline 
            $L$ & $1$ & $3$ & $3$ & $5$ & $2$ & $1$ & $8$ & $9$ & $3$ & $11$ & $1$ & $9$ & $7$ & $3$ & $10$ & $3$ \\
            $L_{\gamma^{-1}}$ & $1$ & $6$ & $6$ & $5$ & $4$ & $2$ & $16$ & $9$ & $6$ & $22$ & $1$ & $18$ & $14$ & $3$ & $10$ & $6$ \\
            $\lcm(L_{\gamma^{-1}}, 2)$ & $2$ & $6$ & $6$ & $10$ & $4$ & $2$ & $16$ & $18$ & $6$ & $22$ & $2$ & $18$ & $14$ & $6$ & $10$ & $6$  \
            \end{tabular}
            \caption{Comparing $L$ to $\lcm(L_{\gamma^{-1}}, 2)$ with $d=2$, $p=5$.}
            \label{table: L1}
            \end{table}
\end{example}

        We can also obtain an explicit formula in the following case: 
        
        \begin{corollary}   \label{cor: r = p + 1}
            When $r = p + 1$, we have $\lambda_r = 0$, the minimal period $L$ is $1$, and $\nu_r(n) = \frac{1}{2} (p-1) (\tau^{-1} -1)$ for $n \geq 1$.
        \end{corollary}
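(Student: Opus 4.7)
The plan is to exploit the fact that $r = p+1$ forces $\gamma = p\tau$, so $\gamma^{-1} = \tau^{-1}/p$, $\gamma_D = \tau_D$, and $t_n = T_{n-1}$, where $T_n \colonequals \floor{\tau^{-1} p^n}$.  By \eqref{eq: basep tau}, the fractional part $f_n \colonequals \fracpart{\tau^{-1} p^n}$ alternates between $\tau^{-1}$ and $1 - \tau^{-1}$, so the product $f_n(1 - f_n) = \tau^{-1}(1 - \tau^{-1})$ is independent of $n$.

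Starting from the first formula in \Cref{prop: new sum breakdown} and writing $p^n - \floor{\tau i} = (p^n - \tau i) + \fracpart{\tau i}$, I would combine the arithmetic sums coming from $\sum_{i=1}^{T_{n-1}} (\gamma - \tau) i$ and $\sum_{i=T_{n-1}+1}^{T_n} (p^n - \tau i)$, substitute $T_n = \tau^{-1} p^n - f_n$, and simplify.  The constancy of $f_n(1-f_n)$ makes every $p^n$- and $f_n$-contribution cancel, leaving
\[
a^{p+1}(X_n) = \tfrac{1}{2}(\tau^{-1} - \gamma^{-1}) p^{2n} - \tfrac{1}{2}(p-1)(1 - \tau^{-1}) + \sum_{i = T_{n-1}+1}^{T_n} \paren{\fracpart{\tau i} - \delta(i)}.
\]
Thus the corollary reduces to proving the identity $\sum_{i=1}^{T_n} \fracpart{\tau i} = \sum_{i=1}^{T_n} \delta(i)$ for every $n \geq 0$: the telescoped difference then vanishes for $n \geq 1$, giving the constant $\nu_{p+1}(n) = \tfrac{1}{2}(p-1)(\tau^{-1} - 1)$ with minimal period $1$, and comparing with \Cref{thm: main v.1} forces $\lambda_{p+1} = 0$ (otherwise the $\lambda_{p+1} n$ term would be unbounded).

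This identity is the main step, and I would prove it by computing both sides in closed form.  The left side follows from \Cref{lemma: periodic sum}, since $\fracpart{\tau i}$ is $\tau_D$-periodic with average $\tfrac{1}{2}(1 - 1/\tau_D)$; the boundary is determined by $T_n \bmod \tau_D$, which a short calculation using $\tau^{-1} = d/(p+1)$ and $p^n \equiv (-1)^n \pmod{p+1}$ shows equals $0$ when $n$ is even and $\tau_D - 1$ when $n$ is odd.  For the right side, \Cref{lemma: delta sum by exponent} and \Cref{lemma: delta sum for exponent} rewrite the sum as $\tfrac{1}{2}(1-1/p)(1-1/\tau_D) \sum_{e=0}^n T_e + \sum_{e=0}^n F_{\tau^{-1}}(e)$, and the pair identity $F_{\tau^{-1}}(1) + F_{\tau^{-1}}(2) = \tfrac{1}{2}(1-1/p)(1-1/\tau_D)$ at the heart of the proof of \Cref{prop: linear coefficient of tau inverse sum is zero} collapses the $F$-contribution to a quantity depending only on the parity of $n$.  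Equating the two closed forms reduces to an algebraic identity that factors out $\tau^{-1}(1 - 1/\tau_D)$ and cancels.  The main obstacle is the bookkeeping across the four cases given by the parity of $n$ against the parity of $d$ (since $\tau_D = d/\gcd(d,2)$), but the terms match in each combination and the base case $n = 0$ is immediate.
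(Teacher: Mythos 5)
Your approach is correct, and it takes a genuinely different route from the paper's. The paper starts from the general decomposition $\nu_r(n) = A_{\tau^{-1}}(n) - A_{\gamma^{-1}}(n) + B_{\gamma^{-1}}(n) - B_{\tau^{-1}}(n)$ supplied by \Cref{thm: main v.1}, uses $\gamma^{-1} = \tau^{-1}/p$ to relate $A_{\gamma^{-1}}$ and $B_{\gamma^{-1}}$ to $A_{\tau^{-1}}$ and $B_{\tau^{-1}}$, and then does a ``long but elementary'' explicit computation (left to the reader) showing that each of $A_{\tau^{-1}} - A_{\gamma^{-1}}$ and $B_{\gamma^{-1}} - B_{\tau^{-1}}$ has period two but their sum is constant. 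You instead go back directly to the first formula of \Cref{prop: new sum breakdown}, exploit the observation $\gamma = p\tau$ (so $t_n = T_{n-1}$), split $p^n - \floor{\tau i}$ into linear and fractional pieces, and use the invariance of $f_n(1-f_n) = \tau^{-1}(1-\tau^{-1})$ coming from \eqref{eq: basep tau} to collapse the arithmetic part to $\tfrac{1}{2}(\tau^{-1}-\gamma^{-1})p^{2n} + \tfrac{1}{2}(p-1)(\tau^{-1}-1)$ exactly; I verified this algebra and it checks out. What remains is your clean intermediate identity $\sum_{i=1}^{T_n}\fracpart{\tau i} = \sum_{i=1}^{T_n}\delta(i)$ for $n \geq 0$, which isn't stated anywhere in the paper and is specific to the endpoints $T_n = \floor{\tau^{-1}p^n}$ (it fails for generic upper limits). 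Your sketch of its proof is essentially right; the one detail worth spelling out is that in the odd-$n$ case you need the value $\sum_{k=1}^{d-1}\delta_0(k) = \frac{d(\tau_D-1)}{2\tau_D}$, which follows from the pairing in \Cref{prop: delta negation reduction}~(iii)--(iv) (pair $j$ with $d-j$; when $d$ is even the fixed point $j = \tau_D$ contributes zero). With that, the even-$n$ and odd-$n$ cases both reduce to geometric-series identities, confirming the telescoping and hence constancy of $\nu_{p+1}$ for $n \geq 1$, the period $L = 1$, and (by comparison with \Cref{thm: main v.1}) $\lambda_{p+1} = 0$. What your approach buys is a conceptually transparent proof that avoids the rather involved $A$/$B$ bookkeeping, at the cost of having to prove a non-trivial-looking (but ultimately elementary) sum identity; the paper's approach is more uniform with the rest of the machinery but defers an equally involved calculation to the reader.
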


        \begin{proof}
            For \(r = p+1\), we have $\gamma^{-1}=\frac{1}{p}\tau^{-1}$, so $\avg{F_{\gamma^{-1}}}=\avg{F_{\tau^{-1}}}$ and $\avg{\gamma^{-1}} = \avg{\tau^{-1}}$. Thus, by \Cref{thm: main v.1} and \Cref{prop: linear coefficient of tau inverse sum is zero}, we obtain \(\lambda_r = 0\).  It is also immediate that $D_{\gamma^{-1}} = 1$.
    
            To see the minimal period, we will analyze
             \begin{equation*}
                 \nu_r(n) = A_{\tau^{-1}}(n) - A_{\gamma^{-1}}(n) + B_{\gamma^{-1}}(n) - B_{\tau^{-1}}(n).
             \end{equation*}
             This is slightly tricky, as $A_{\tau^{-1}}(n) - A_{\gamma^{-1}}(n)$ and $B_{\gamma^{-1}}(n) - B_{\tau^{-1}}(n)$ both have period two but the sum is constant. It is an elementary but involved exercise, which we leave to the reader, to check
            \begin{equation*}
                A_{\tau^{-1}}(n) - A_{\gamma^{-1}}(n) 
                = 
                \frac{1}{2}(p-1)(\tau^{-1} - 1) 
                + (-1)^n \paren{-\paren{1 - \frac{1}{\tau_D}}\tau^{-1}}    \quad \text{and}     
            \end{equation*}
            
            \begin{align*}
                B_{\gamma^{-1}}(n) - B_{\tau^{-1}}(n) 
                &= B_{\tau^{-1}}(n - 1) - B_{\tau^{-1}}(n)
            \\
                &= (-1)^n \paren{ \frac{1}{2}\paren{1 - \frac{1}{\tau_D}}\paren{\frac{1}{p} d + \paren{1 - \frac{1}{p}} \tau^{-1} } }.
            \end{align*}
             There are two key observations:
             \begin{itemize}
                 \item  $\gamma^{-1}=\frac{1}{p}\tau^{-1}$, so for example $B_{\gamma^{-1}}(n) = B_{\tau^{-1}}(n-1)$.
                 \item  as $\floor{\tau^{-1} p^n} \mod{\tau_D}$ is $0$ if $n$ is even and $\tau_D-1$ if $n$ is odd, the sums appearing in the definitions of $A_{\tau^{-1}}(n)$ and $B_{\tau^{-1}}(n)$ are almost the same, with one summand depending on the parity of $n$.
             \end{itemize}
             It is then a long but elementary computation to work out the exact formula.
       \end{proof}     
      
    \subsection{Additional Information about Periods and \texorpdfstring{$\lambda_r$}{lambda}}  \label{sec: additional}
        In this section, we tighten our predictions about the parameter $\lambda_r$.

        \begin{corollary} \label{cor: lambda * L_gamma-1 integral}
            The quantity $\lambda_r \cdot L$ is an integer.
        \end{corollary}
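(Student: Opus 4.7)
The natural plan is to mimic the opening of the proof of Theorem \ref{thm: value of L actual}. Since $\nu_r$ has period $L$, for $n$ sufficiently large, applying \Cref{thm: main v.1} at $n$ and $n+L$ and subtracting yields
\[
a^r(X_{n+L}) - a^r(X_n) = \tfrac{1}{2}(\tau^{-1}-\gamma^{-1})(p^{2L}-1)p^{2n} + \lambda_r L.
\]
The left-hand side is an integer, so it suffices to show the first summand on the right is an integer for some such $n$; this immediately forces $\lambda_r L \in \Z$.

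I would split that summand as $\tfrac{1}{2}\tau^{-1}(p^{2L}-1)p^{2n} - \tfrac{1}{2}\gamma^{-1}(p^{2L}-1)p^{2n}$. The $\tau^{-1}$-piece, $\tfrac{d(p^{2L}-1)p^{2n}}{2(p+1)}$, is integral because $(p+1)\mid p^{2L}-1$ with cofactor $(p-1)(1+p^2+\cdots+p^{2L-2})$, which is even since $p$ is odd. For the $\gamma^{-1}$-piece, using $\gamma^{-1} = p^{-v_p(\gamma)}\gamma_D/\gamma_N$ it rewrites as $\tfrac{\gamma_D(p^{2L}-1)p^{2n-v_p(\gamma)}}{2\gamma_N}$. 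By \Cref{thm: value of L actual} we already know $\gamma_N\mid p^{2L}-1$, so the numerator divided by $\gamma_N$ is an integer; the remaining task is to verify that the resulting integer $\gamma_D\cdot\tfrac{p^{2L}-1}{\gamma_N}\cdot p^{2n-v_p(\gamma)}$ is even, so that the division by $2$ is safe.

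The main obstacle is this $2$-adic step. When $\gamma_N$ is odd it is immediate: the quotient $(p^{2L}-1)/\gamma_N$ has $2$-adic valuation equal to $v_2(p^{2L}-1)\ge 3$, so the product is clearly even. When $\gamma_N$ is even (forcing $\gamma_D$ odd by the coprimality $\gcd(\gamma_N,\gamma_D)=1$), the required strict inequality $v_2(p^{2L}-1) > v_2(\gamma_N)$ is the genuinely delicate point, because Theorem \ref{thm: value of L actual} only provides the non-strict version. My plan to establish it is to combine the lifting-the-exponent identity
\[
v_2(p^{2L}-1) = v_2(p-1) + v_2(p+1) + v_2(L)
\]
with the bound $v_2(\gamma_N) \le v_2((p-1)r+(p+1)) - v_2(d/\gamma_D)$ coming from the factorization $(p-1)r+(p+1) = (d/\gamma_D)\,p^{v_p(\gamma)}\gamma_N$ (noting $\gamma_D\mid d$), and to conclude via a case analysis on the parity of $r$, using that exactly one of $v_2(p-1), v_2(p+1)$ equals $1$. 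Once the strict inequality is in hand, evenness follows and the $\gamma^{-1}$-piece is integral, yielding $\lambda_r L\in\Z$.
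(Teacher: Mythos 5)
Your approach is essentially the paper's: both start from the telescoping identity $a^r(X_{n+L}) - a^r(X_n) = \frac{1}{2}(\tau^{-1}-\gamma^{-1})(p^{2L}-1)p^{2n} + \lambda_r L$, observe the left side is an integer, and analyze the $\tau^{-1}$ and $\gamma^{-1}$ pieces of the first summand. Your treatment of the $\tau^{-1}$-piece via the factorization $p^{2L}-1 = (p-1)(p+1)(1+p^2+\cdots+p^{2L-2})$ is a clean write-out of what the proof of \Cref{thm: value of L actual} says, and you are right to flag that the integrality of the $\gamma^{-1}$-piece reduces to the strict inequality $v_2(p^{2L}-1) > v_2(\gamma_N)$. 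This is a real subtlety: the paper's own (very terse) proof of this corollary simply invokes the congruence from the proof of \Cref{thm: value of L actual}, which only establishes the non-strict $\gamma_N \mid p^{2L}-1$, and the passage from $0 \equiv -\frac{\gamma_D}{2}(p^{2L}-1)p^{2n-v_p(\gamma)} + \gamma_N\lambda_r L \pmod{\gamma_N}$ to $\gamma_N\lambda_r L \equiv 0 \pmod{\gamma_N}$ silently uses the strict version when $\gamma_N$ is even.

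That said, your sketched plan for establishing the strict inequality does not close the gap. A case split on the parity of $r$ only controls $v_2((p-1)r+(p+1))$ when $r$ is odd, or when $r$ is even and $p\equiv 1 \pmod 4$: in those cases $v_2((p-1)r+(p+1)) = 1$, so $v_2(\gamma_N) \le 1 < 3 \le v_2(p^{2L}-1)$ and all is well. But when $r$ is even and $p\equiv 3 \pmod 4$ with $1+v_2(r) = v_2(p+1)$, there is genuine $2$-adic cancellation: writing $p+1 = 2^a u$ and $(p-1)r = 2^a w$ with $u,w$ odd, $v_2((p-1)r+(p+1)) = a + v_2(u+w)$ with $v_2(u+w)$ unbounded in $r$. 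The required inequality then becomes $1 + v_2(L) + v_2(d) > v_2(u+w)$, which needs quantitative control of $v_2(L)$; the only input you have from \Cref{thm: value of L actual} is $L_{\gamma^{-1}}\mid 2L$, giving $v_2(L) \geq v_2(L_{\gamma^{-1}}) - 1$, which is one short of what is needed in the borderline case $L = L_{\gamma^{-1}}/2$. So the parity-of-$r$ case analysis alone does not yield a complete proof, and you would need an additional argument to pin down $v_2(L)$ or $v_2(\gamma_N)$ in that subcase.
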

        \begin{proof}
            The proof of \Cref{thm: main v.1} shows \(\gamma_N \lambda_r L \equiv 0 \bmod \gamma_N\), which is equivalent to $\lambda_r L \in \mathbb{Z}$.
        \end{proof}

        This is supported by a variety of examples we have investigated. The following proposition relates the formulas for $a^r(X_n)$ for certain pairs of $r$ values.
        
        \begin{proposition}
            Let $r_0,r_1$ be non-negative integers such that $r_1 = (r_0+1)p+1$.  Let $\lambda_0, \lambda_1$ (resp. $D_0, D_1$) be the values of $\lambda_r$ (resp. $D_{\gamma^{-1}}$) for $r_0,r_1$.  Then $\lambda_1 = \lambda_{0}$ and $D_{1} = D_{0}+1$.  %
        \end{proposition}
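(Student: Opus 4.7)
The entire proposition reduces to one clean observation: the hypothesis $r_1 = (r_0+1)p + 1$ forces $\gamma_1 = p\gamma_0$. I would verify this first by a direct computation: writing $\gamma_i = \frac{(p-1)r_i + (p+1)}{d}$,
\[
(p-1)r_1 + (p+1) = (p-1)\bigl((r_0+1)p + 1\bigr) + (p+1) = p\bigl((p-1)r_0 + (p+1)\bigr),
\]
so $\gamma_1 = p\gamma_0$, and hence $\gamma_1^{-1} = p^{-1}\gamma_0^{-1}$. In particular, $\gamma_{1,D} = \gamma_{0,D}$ and the base-$p$ expansion of $\gamma_1^{-1}$ is obtained from that of $\gamma_0^{-1}$ by shifting every digit one position to the right.

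The claim $D_1 = D_0 + 1$ then follows from \Cref{lemma: Lx}, which gives $D_{\gamma^{-1}} = -v_p(\gamma^{-1}) = v_p(\gamma)$: we have $v_p(\gamma_1) = v_p(p \gamma_0) = 1 + v_p(\gamma_0)$. At the same time, because $\gamma_{1,D} = \gamma_{0,D}$, \Cref{lemma: Lx} also yields $L_{\gamma_1^{-1}} = L_{\gamma_0^{-1}}$; call this common value $L$. The digit-shift identification additionally gives $\avg{\gamma_1^{-1}} = \avg{\gamma_0^{-1}}$, since the averages are computed over the common eventually periodic tail.

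For the equality $\lambda_1 = \lambda_0$ I would appeal to the formula
\[
\lambda_r = \avg{F_{\gamma^{-1}}} - \frac{\avg{\gamma^{-1}}}{2p}\left(1 - \frac{1}{\tau_D}\right)
\]
from \Cref{thm: main v.1}. The quantity $\tau_D$ depends only on $p$ and $d$, not on $r$, so it is the same in both formulas, and we have just seen that $\avg{\gamma_1^{-1}} = \avg{\gamma_0^{-1}}$. It remains to verify $\avg{F_{\gamma_1^{-1}}} = \avg{F_{\gamma_0^{-1}}}$. Recall from \Cref{lemma: delta sum for exponent} that $F_{x^{-1}}(e)$ depends only on $\lfloor x^{-1} p^e \rfloor \bmod \tau_D p$. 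Since $\gamma_1^{-1}p^e = \gamma_0^{-1}p^{e-1}$ for $e \geq 1$, we get $F_{\gamma_1^{-1}}(e) = F_{\gamma_0^{-1}}(e-1)$. Computing the average over a full period starting at $D_1 + 1 = D_0 + 2$:
\[
\avg{F_{\gamma_1^{-1}}} = \frac{1}{L}\sum_{e=D_0+2}^{D_0+1+L} F_{\gamma_1^{-1}}(e) = \frac{1}{L}\sum_{e=D_0+1}^{D_0+L} F_{\gamma_0^{-1}}(e) = \avg{F_{\gamma_0^{-1}}},
\]
which gives the desired $\lambda_1 = \lambda_0$.

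\textbf{Main obstacle.} There is no real obstruction here: once the scaling $\gamma_1 = p\gamma_0$ is noticed, everything else is a careful bookkeeping exercise using \Cref{lemma: Lx} and the observation that averaging an eventually periodic sequence over any one of its full periods yields the same value. The only subtlety is verifying that the shift in delay index by one is absorbed exactly by the shift $F_{\gamma_1^{-1}}(e) = F_{\gamma_0^{-1}}(e-1)$, which the reindexing above handles.
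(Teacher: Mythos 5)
Your proof takes essentially the same approach as the paper: observe $\gamma_1 = p\gamma_0$, deduce that the base-$p$ expansions differ by a shift, conclude $D_1 = D_0 + 1$, and then argue $\avg{\gamma_1^{-1}} = \avg{\gamma_0^{-1}}$ and $\avg{F_{\gamma_1^{-1}}} = \avg{F_{\gamma_0^{-1}}}$ via the index shift to obtain $\lambda_1 = \lambda_0$ from the formula in the main theorem. Your version is actually slightly more careful in two places: you get the direction of the shift right, $F_{\gamma_1^{-1}}(e) = F_{\gamma_0^{-1}}(e-1)$, whereas the paper has an inconsequential sign slip writing $(e+1)$, and you spell out the reindexing of the period window when averaging $F$, which the paper leaves implicit.
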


        \begin{proof}
            Let $\gamma_1$ and $\gamma_0$ be the value of $\gamma = \frac{p-1}{d} r + \tau = \frac{(p-1)r + (p+1)}{d}$ when $r = r_1$ and $r = r_0$ respectively.  We see that
            \[\gamma_1^{-1} = \frac{d}{(p-1)((r_0 +1)p + 1) + p+ 1} = \frac{d}{p ( (p-1) r_0 + (p-1) ) + 2p} = \frac{1}{p} \gamma_0^{-1}\]
             This implies that the repeating parts of the base-$p$ expansions are the same and $D_{\gamma_1^{-1}} = D_{\gamma_0^{-1}}+1$.  Thus $\avg{\gamma_1^{-1}} = \avg{\gamma_0^{-1}}$ and $F_{\gamma_1^{-1}}(e) = F_{\gamma_0^{-1}}(e+1)$. Since the latter implies $\avg{F_{\gamma_1}^{-1}} = \avg{F_{\gamma_0}^{-1}}$, the claim follows from the formula for $\lambda_r$ in \Cref{thm: main v.1}.
        \end{proof}

        In the above proof, \(\gamma_1^{-1} = \frac{1}{p} \gamma_0^{-1}\) implies $L_{\gamma_1^{-1}} = L_{\gamma_0^{-1}}$, giving a relationship between the possible minimal periods of $a^{r_1}(X_n)$ and $a^{r_0}(X_n)$. We conjecture:
        
        \begin{conj}
            For two $r$-values, $r_1$, $r_0$, where $r_1 = (r_0 + 1)p + 1$, the minimal periods for the $a^{r_1}(X_n)$ and $a^{r_0}(X_n)$ are the same.
        \end{conj}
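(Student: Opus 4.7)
The plan is to build directly on the preceding proposition, which establishes $\gamma_1^{-1}=p^{-1}\gamma_0^{-1}$, $\lambda_{r_1}=\lambda_{r_0}$, and $D_{\gamma_1^{-1}}=D_{\gamma_0^{-1}}+1$. A first easy consequence, via \Cref{lemma: Lx}, is that $L_{\gamma_1^{-1}}=L_{\gamma_0^{-1}}$, since $\gamma_{1,N}=\gamma_{0,N}$. Thus \Cref{thm: value of L actual} yields the \emph{same} set of candidate minimal periods $\{\,\lcm(L_{\gamma^{-1}},2),\ \tfrac{1}{2}\lcm(L_{\gamma^{-1}},2)\,\}$ for both $r_0$ and $r_1$, and the conjecture reduces to showing that $r_0$ and $r_1$ fall into the same case of \Cref{thm: value of L actual}.

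To this end, I would exploit the identity $\gamma_1^{-1}p^n=\gamma_0^{-1}p^{n-1}$, which for $n\geq D_{\gamma_1^{-1}}$ gives the shift relations
\[
\fracpart{\gamma_1^{-1}p^n}=\fracpart{\gamma_0^{-1}p^{n-1}},\qquad \floor{\gamma_1^{-1}p^n}\equiv\floor{\gamma_0^{-1}p^{n-1}}\pmod{\gamma_D}.
\]
Plugging these into the defining formulas for $A_{\gamma^{-1}}$ (\Cref{prop: single floor sum}) and $B_{\gamma^{-1}}$ (\Cref{prop: delta sum}), the only surviving discrepancies between $A_{\gamma_1^{-1}}(n),B_{\gamma_1^{-1}}(n)$ and $A_{\gamma_0^{-1}}(n-1),B_{\gamma_0^{-1}}(n-1)$ come from the coefficient $\gamma_1=p\gamma_0$ (rather than $\gamma_0$) appearing in the non-shifted parts of the formulas and from the rearrangement $\fracpart{\gamma_1 k}=\fracpart{p\gamma_0 k}$ inside the partial sum $\sum_{k=1}^{M}\fracpart{\gamma k}$, where $M\equiv\floor{\gamma_0^{-1}p^{n-1}}\pmod{\gamma_D}$. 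Since the $\tau$-pieces of $\nu_r(n)$ from \Cref{thm: main v.1} are independent of $r$, combining these computations yields an explicit formula
\[
E(n)\;:=\;\nu_{r_1}(n+1)-\nu_{r_0}(n)
\]
which is manifestly periodic in $n$ with period dividing $L_{\gamma^{-1}}$, since both $\fracpart{\gamma_0^{-1}p^{n-1}}$ and $M$ are.

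To finish, one must show that $E(n)$ has period dividing the \emph{minimal} period $L_{\min}(r_0)$ of $\nu_{r_0}$. Granting this, the identity $\nu_{r_1}(n+1)=\nu_{r_0}(n)+E(n)$ yields $L_{\min}(r_1)\mid L_{\min}(r_0)$, and reversing the roles (writing $\nu_{r_0}(n)=\nu_{r_1}(n+1)-E(n)$ and applying the same argument with $r_0$ and $r_1$ interchanged) gives the reverse divisibility, hence equality. The main obstacle lies precisely here: when $L_{\min}(r_0)=\tfrac12\lcm(L_{\gamma^{-1}},2)$ (the ``halving'' case of \Cref{thm: value of L actual}), we need $E(n)$ to satisfy the matching identity $E(n+L_{\min}(r_0))=E(n)$. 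This amounts to showing that the inner correction sum
\[
\sum_{k=1}^{M(n)}\bigl(\fracpart{p\gamma_0 k}-\fracpart{\gamma_0 k}\bigr)
\]
together with its analogue for $B$ are invariant under $n\mapsto n+L_{\min}(r_0)$, which is a parity-type statement about the orbit of $p$-multiplication on the residues $\floor{\gamma_0^{-1}p^n}\bmod{\gamma_Dp}$. I expect this to be the technical heart of the argument: one should likely mirror the style of \Cref{prop: delta negation reduction} and \Cref{prop: linear coefficient of tau inverse sum is zero} to show that the halving symmetry of $\nu_{r_0}$ is inherited by $E$, rather than disrupted by the extra factor of $p$. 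The numerical evidence gathered in the paper suggests this inheritance always occurs, but a clean conceptual reason is the missing ingredient.
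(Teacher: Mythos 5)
The statement you are trying to prove is the final \emph{conjecture} of the paper, not a theorem: the authors explicitly state it as an open problem (``We end with another conjecture.'') and offer no proof. So there is no argument in the paper against which to compare yours; you are attempting something the authors themselves did not claim to resolve.

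Your outline is a sensible plan of attack and the preliminary reductions are correct: $\gamma_1^{-1}=p^{-1}\gamma_0^{-1}$ implies $\gamma_{1,N}=\gamma_{0,N}$ and hence $L_{\gamma_1^{-1}}=L_{\gamma_0^{-1}}$ by \Cref{lemma: Lx}, so \Cref{thm: value of L actual} does place both minimal periods in the two-element set $\{\lcm(L_{\gamma^{-1}},2),\ \tfrac12\lcm(L_{\gamma^{-1}},2)\}$, and the preceding proposition already pins down $\lambda_{r_1}=\lambda_{r_0}$ and the shift in delay. The step where you compare $\nu_{r_1}(n+1)$ to $\nu_{r_0}(n)$ via the identity $\gamma_1^{-1}p^n=\gamma_0^{-1}p^{n-1}$ is also the natural move. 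But the crux — showing that the discrepancy $E(n)$ inherits the \emph{minimal} period $L_{\min}(r_0)$ of $\nu_{r_0}$, in particular in the ``halving'' case where $L_{\min}=\tfrac12\lcm(L_{\gamma^{-1}},2)$ — is exactly where you stop, and you say so candidly. There is also a subtler logical issue: to deduce $L_{\min}(r_1)\mid L_{\min}(r_0)$ from $\nu_{r_1}(n+1)=\nu_{r_0}(n)+E(n)$ you need $E$ to have period dividing $L_{\min}(r_0)$, and for the reverse divisibility you need $E$ to have period dividing $L_{\min}(r_1)$; since $E$ is a single fixed function, this is not a symmetric ``reversal of roles'' but two separate claims about $E$, each of which appears roughly as hard as the original statement. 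The paper itself never characterizes when the halving case occurs (see the remark following \Cref{thm: value of L actual}), and without that characterization it is hard to see how to control the period of $E$. In short: the proposal is an honest plan with a correctly identified gap, but it is not a proof, and the paper does not supply one either.
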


\bibliographystyle{amsalpha}
\bibliography{latticebib}

\end{document}